\newtheorem{Theorem}{Theorem}[section]
\newtheorem{Lemma}{Lemma}[section]
\newtheorem{Remark}{Remark}[section]
\newtheorem{Definition}{Definition}[section] 
\begin{document}
	
\title{\Large \bf Dynamics of Riemann waves with sharp measure-controlled damping}

\author{\normalsize M. M. Cavalcanti, T. F. Ma\thanks{Corresponding author.} , P. Mar\'{\i}n-Rubio, P. N. Seminario-Huertas}

\date{}

\maketitle

\begin{abstract} 
This paper is concerned with locally damped semilinear wave equations defined on compact Riemannian manifolds with boundary. We present a construction of measure-controlled damping regions which are sharp in the sense that their summed interior and boundary measures are arbitrarily small. The construction of this class of open sets is purely geometric and allows us to prove a new observability inequality in terms of potential energy rather than the usual one with kinetic energy. A unique continuation property is also proved. Then, in three-dimension spaces, we establish the existence of finite dimensional smooth global attractors for a class of wave equations with nonlinear damping and $C^1$-forces with critical Sobolev growth. In addition, by means of an obstacle control condition, we show that our class of measure-controlled regions satisfies the well-known geometric control condition (GCC). Therefore, many of known results for the stabilization of wave equations hold true in the present context.
\end{abstract}

\noindent {\bf Keywords}: Observability, unique continuation, localized damping, Riemannian manifold, global attractor.

\noindent {\bf MSC2010}: 35B40, 35L20, 35B41, 93B07, 35R01.

\tableofcontents

\section{Introduction} 
Let $(M,{\bf g})$ be a compact Riemannian manifold with smooth boundary $\partial M$ and metric ${\bf g}$. In order to place our goals and results, let us firstly consider the linear wave equation 
$$
\left\{ 
\begin{array}{lll}
\partial_t^2 u - \Delta u + \chi_{\omega} \partial_t u = 0 \;\; \mbox{in} \;\; M \times \mathbb{R}^{+}, \smallskip \\
u = 0 \;\; \mbox{on} \;\; \partial M, \smallskip \\ 
u(0) = u_0, \;\; \partial_t u(0) = u_1, 
\end{array}
\right.
$$  
where $\Delta$ is the Laplace-Beltrami operator on $M$ and $\chi_{\omega}$ is the characteristic function of an open subset $\omega$ of $M$. 
The energy of the system is given by 
$$
E = \frac{1}{2} \int_{M} \left(| \partial_t u |^2 + | \nabla u |^2 \right) dx,   
$$ 
where $\nabla$ stands for the Levi-Civita connection on $M$. 

\medskip

It is well known that the energy $E(t)$ decays exponentially to zero if and only if $\omega$ satisfies (GCC), the geometric control condition, 
a sharp result by Bardos, Lebeau and Rauch \cite{blr} (cf. Burq and Gerard \cite{burq-gerard}). This condition asserts that there exists $T_0>0$ such that any generalized geodesic traveling with speed 1 hits $\omega$ before an elapsed time $T_0$. The idea that all geodesics must meet a control region $\omega$ was earlier considered for manifolds without boundary by Rauch and Taylor \cite{rt-indiana,rt-cpam} that goes back to Ralston \cite{ralston} in a Euclidean setting. 
A distinguished feature is that $\omega$ can be chosen with arbitrarily small volume ${\rm meas}_{M}(\omega)$.

\medskip 

One of our concerns is to control the measure of such observation region $\omega$. Let us consider a simple example with flat geometry. In Figure \ref{fig-square}, 
$M$ is a square of side $1$. It is clear that both regions $\omega_1$ and $\omega_2$ satisfy (GCC). Although ${\rm meas}_{M}(\omega_1)$ can be taken arbitrarily small, its boundary measure is ${\rm meas}_{\partial M}(\omega_1 \cap \partial M) > 2$. 
On the other hand, for $\omega_2$, the summed interior and boundary measure ${\rm meas}_{M}(\omega_2) + {\rm meas}_{\partial M}(\omega_2 \cap \partial M)$ can be taken arbitrarily small. Here, we say that an open subset $\omega$ of $M$ is $\varepsilon$-controllable (in measure) if given $\varepsilon > 0$, 
$$
{\rm meas}_M (\omega) + {\rm meas}_{\partial M} (\omega \cap \partial M) <\varepsilon.
$$
The question of whether the boundary measure ${\rm meas}_{\partial M}$ of a set $\omega$ satisfying (GCC) 
can be arbitrarily small was studied by Cavalcanti et al. \cite{cavalcanti1,cavalcanti2}. 

\begin{figure}[htb] 
\begin{center}
\includegraphics[scale=0.5]{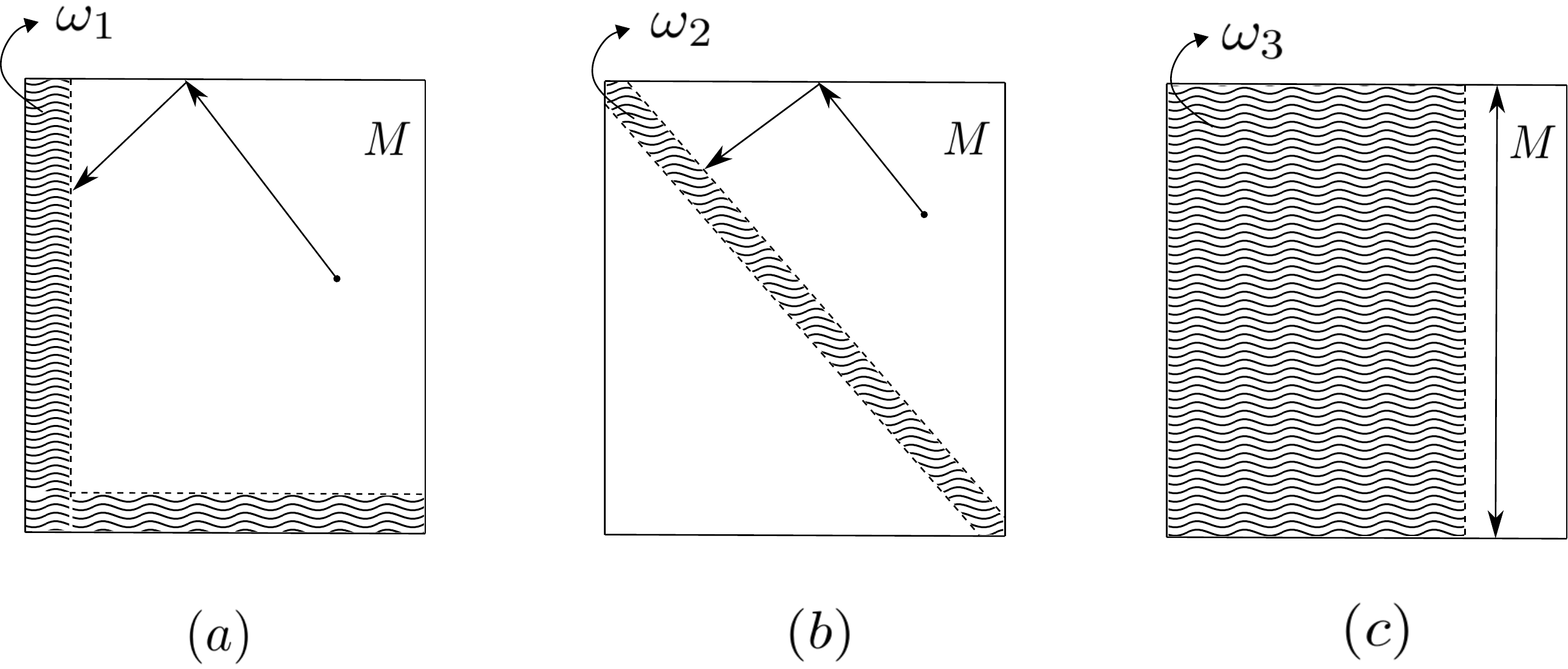} 
\end{center}
\caption{\small Let $M$ be the unitary square. It is easy to see that both control regions $\omega_1,\omega_2$ satisfy (GCC). 
The region $\omega_2$ is $\varepsilon$-controllable but $\omega_1$ is not because ${\rm meas}_{\partial M}(\omega_1) > 2$. 
The region $\omega_3$ does not satisfy (GCC) since $M$ possesses trapped (vertical) rays. }
\label{fig-square}
\end{figure}

\medskip 

We are interested in the long-time dynamics of semilinear waves with damping mechanism effective only in an $\varepsilon$-controllable region. 
As it is well-known (cf. \cite{clt,fz}) we shall need an appropriate unique continuation property and observability inequalities. 
We recall that the method used by Bardos, Lebeau and Rauch \cite{blr} combines fine results on propagation of singularities 
by Melrose and Sjostrand \cite{MS1,MS2} and microlocal analysis. Their arguments require that the solutions have higher regularity. 
Keeping in mind that we consider solutions of semilinear wave equations with $H^2(M)$-regularity we shall use another approach. 
Indeed, we follow in part the ideas developed in Cavalcanti et al. \cite{cavalcanti2} which is based on the results by Triggiani and Yao \cite{ty2002} 
and Lasiecka and Tataru \cite{Lasiecka-Tataru}. Their arguments use the concept of scape vector fields (cf. \cite{LTZ,miller,peng}) and    
aim to construct a control region by dividing the boundary $\partial M$ with respect to the sign of 
$\langle H , \nu \rangle$, where $\nu$ is the unit outward normal and $H$ is a strategic vector field.
This method requires less regularity and allows Carleman estimates.    

Our main contributions in this paper can be summarized as follows.  

\bigskip

$(i)$ Firstly, in Theorem \ref{thm-geometry}, we present the construction of a 
scape potential function $d$ defined in a part $V$ of the manifold $M$ in such a way that $M \setminus V$ is arbitrary 
small. This allows us to define a control/damping region $\omega \supset \overline{M \setminus V}$ that is 
$\varepsilon$-controllable. Our construction of $V,d,\omega$ is purely geometric and contrasts in great measure 
the presentation given in \cite{cavalcanti1,cavalcanti2} which is specific to a particular wave equation. Based on this construction we define 
the class of {\it admissible} $\varepsilon$-controllable sets. Indeed, in Theorem \ref{thm-overlapp}, we show the decomposition of 
an admissible set $\omega$ in overlapping sub-domains so that Carleman estimates can be performed in order to get observability and unique continuation.  

\medskip

$(ii)$ In Section \ref{subsec-gcc} we prove that our admissible $\varepsilon$-controllable sets satisfy (GCC). 
This is done by defining a new {\it obstacle} condition (Definition \ref{def-obstacle}) which is motivated 
by earlier ideas in \cite{Lasiecka-Tataru,miller}. Therefore many known results for control and stabilization  
of wave equations under (GCC) can be extended to the context of sharp measure-controlled damping region.

\medskip

$(iii)$ In Theorem \ref{thm-TYOVER} we revisit a controllability result based on Carleman estimates  
by Triggiani and Yao \cite{ty2002}. Then we use it to obtain (in-one-shot) 
observability and unique continuation for a linear wave equation plus a potential, locally damped in an  
{\it admissible} $\varepsilon$-controllable set. From this, through of a co-area property, we prove a new observability inequality 
of the form 
$$
\int_0^T \int_{\omega} |\nabla u|^2\, dxdt \ge C_{T} \big( E(0) + E(T) \big),        
$$
which is given in Theorem \ref{thm-ucp}. 
It turns out that this observability inequality suits remarkably the methods of quasi-stability by Chueshov and Lasiecka \cite{chueshov-book,CL-yellow}  
to study long-time dynamics of critical semilinear wave equations.  

\medskip 

$(iv)$ Let $M$ be a three-dimensional manifold with boundary. We study the long-time dynamics of semilinear wave equations 
$$
\partial_{t}^2 u - \Delta u + a(x) g(\partial_{t} u) + f(u) = 0 \;\; \mbox{in} \;\; M \times \mathbb{R}^{+}, 
$$   
with Dirichlet boundary condition and initial data in $\mathcal{H}=H^1_0(M) \times L^2(M)$. The nonlinear damping $g(\partial_t u)$ is globally Lipschitz,  because we seek finite dimensional attractors, and $f(u)$ may have critical Sobolev growth, namely $|f(u)| \approx |u|^3$. 
Both $f,g$ are required to have $C^1$-regularity only. 
	Then by combining observability inequality (Theorem \ref{thm-ucp}) and the recent theory of quasi-stable systems \cite{chueshov-book,CL-yellow}, 
	we establish the existence of regular finite dimensional attractors 
	by assuming $a(x)\ge a_0 >0$ on some {\it admissible} $\varepsilon$-controllable region $\omega$. 
	Detailed assumptions and proofs are presented in Section \ref{sec-dynamics}. To our best knowledge, comparable results were only proved earlier by 
	Chueshov, Lasiecka and Toundykov \cite{clt}, in an Euclidean setting with $f \in C^2$ and $\omega$ satisfying a geometric observability condition. 
	We notice that recently Jolly and Laurent \cite{joly-laurent} 
	proved the existence of global attractors for supercritical wave equations ($|f(u)| \approx |u|^{5-\epsilon}$) with linear damping 
	$\gamma(x) \partial_t u$ effective in a region $\omega$ satisfying (GCC). They arguments are based on a proper version of a unique continuation 
	property by Robbiano and Zuily \cite{robbiano-zuily}, that requires $f$ to be analytic. Fractal dimension and regularity of attractors 
	were not discussed but their results include unbounded domains and domains without boundary. As observed above, in the case of compact manifold 
	with boundary, their results can be extended to the framework of sharp measure-controlled damping region.

\medskip

In the present paper we only use standard concepts and notations on Riemannian geometry. 
For details we refer the reader to, for instance, do Carmo \cite{doCarmo} and Chavel \cite{cha}. 
With respect to Sobolev spaces on manifolds we refer the reader to Hebey \cite{hebey} and Taylor \cite{taylor}.
 
\section{Geometry for sharp measure control}
\label{sec-geometry} 
\setcounter{equation}{0}

\subsection{Sharp measure control condition} 
 
\begin{Definition} \label{epsilon}
	We say that a measurable subset $\omega$ of $M$ is $\varepsilon$-controllable (in measure) if given $\varepsilon> 0$,   
$$
{\rm meas}_M (\omega) + {\rm meas}_{\partial M} (\omega \cap \partial M) <\varepsilon,
$$
where ${\rm meas}_{A}(B)$ represents the measure of $B$ with respect to the Lebesgue measure defined in $A$. The class of $\varepsilon$-controllable sets of $M$ is denoted by $\chi_{\varepsilon}(M)$.
\end{Definition}

\begin{Remark} 
\rm We have the following properties for $\varepsilon$-controllable sets: 
Let $\varepsilon, \varepsilon_i>0$, $i=1,...,k$, then:
\begin{itemize}
	\item If $\omega_j \in \chi_{\epsilon_j}(M)$ then $\bigcup^k_{j=1} \omega_j \in \chi_{\varepsilon_1+ ... +\varepsilon_k}(M)$, 
	\item The (arbitrary) intersection of elements of $ \chi_{\varepsilon}(M) $ is an element of $ \chi_{\varepsilon}(M)$,
	\item Any set with null measure with respect to the measure of $\partial M$ and $M$, belongs to $ \chi_{\varepsilon}(M)$,
	\item Given $\varepsilon'>0$ such that $\varepsilon<\varepsilon'$ then $\chi_{\varepsilon}(M)\subset \chi_{\varepsilon'}(M)$,
	\item Given $M \subset \widetilde{M}$, then $\chi_{\varepsilon}(M) \subset \chi_{\varepsilon}(\widetilde{M})$,
	\item Given $r \in \mathbb{R}$, $\omega \in \chi_{\varepsilon}(M)$ and $p \in M$ such that $r\omega+p:=\{rx+p \ : \ x \in \omega\} \subset M$, then $r\omega+p \in \chi_{|r|\varepsilon}(M)$.
\end{itemize}
They are standard properties of Lebesgue measure (e.g. \cite{folland}). \qed 
\end{Remark}

We begin with a slightly more general version of a geometric construction by Cavalcanti et al. \cite[Section 6]{cavalcanti2}. 

\begin{Theorem} \label{thm-geometry}
Let $(M,{\bf g})$ be a connected compact $N$-dimensional Riemannian manifold of class $C^{\infty}$ with smooth boundary $\partial M$. 
Then, given $\varepsilon>0$ and $\varepsilon_0 \in (0,\varepsilon)$, the following hold: 
\begin{enumerate}
\item There exists an open set $V \subset M$, with smooth boundary $\overline{\partial V \cap {\rm int}(M)}$, that intercepts $\partial M$ transversally and satisfies 
$$
\overline{M\setminus V} \in \chi_{\epsilon_0}(M). 
$$ 
\item There exists a function $d: M \to \mathbb{R}$ such that:
\begin{itemize}
\item[$(d1)$] $d \in C^{\infty}(M)$,
\item[$(d2)$] ${\rm Hess} \, d(X,X) \ge |X|_{\bf g}^2, \;\; \forall \, X \in T_x M, \;\; x \in \overline{V}$,   
\item[$(d3)$] $\inf _{V} |\nabla d|_{\bf g} >0$,  \;\; $\min_{\overline{V}}d>0$,
\item[$(d4)$] $\langle \nabla d, \nu \rangle_{\bf g} <0 \;\; \mbox{on} \;\; \partial M \cap \overline{V}$. 
\end{itemize}
\item There exists an open set $\omega \in \chi_{\varepsilon}(M)$ such that 
$$
\overline{M \backslash V} \subset \omega \;\; \mbox{and} \;\; \omega\cap V \in \chi_{\varepsilon-\varepsilon_0}(M). 
$$
	\end{enumerate} 
\end{Theorem}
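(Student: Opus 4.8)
The plan is to construct the three objects in the order $V\rightarrow d\rightarrow\omega$, reducing everything to a \emph{local} model on small geodesically convex cells on which the squared–distance function is uniformly convex, and then gluing and thickening. Throughout I exploit the additivity of the classes $\chi_{\varepsilon}(M)$ recorded in the Remark.

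First I would produce $V$. Extend $M$ across $\partial M$ to a slightly larger open manifold $\widetilde M\supset\overline M$ by attaching a collar, and fix $\rho_0>0$ smaller than the injectivity and convexity radii of $\widetilde M$ over the compact set $\overline M$ (positive by compactness). Choose a finite geodesic cell decomposition $M=\bigcup_{j=1}^{K}\overline{C_j}$ with each $C_j$ contained in a geodesically convex ball of radius $<\rho_0$ and with smooth faces meeting $\partial M$ transversally. Shrinking each cell inward by a distance $\eta>0$ to a smooth subcell $V_j\subset\subset C_j$ and setting $V=\bigcup_j V_j$, the complement $M\setminus V$ is the union of the thin walls between cells and has interior-plus-boundary measure $O(\eta)$; taking $\eta$ small gives $\overline{M\setminus V}\in\chi_{\varepsilon_0}(M)$. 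Since the $\overline{V_j}$ are pairwise disjoint compacta, $\overline V=\bigsqcup_j\overline{V_j}$, which is precisely what allows $(d2)$ to be verified cell by cell.

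Next I would produce $d$. For each $j$ choose a centre $c_j\in\widetilde M$ with $0<\mathrm{dist}(c_j,\overline{V_j})<\rho_0$ and set $d_j(x)=\tfrac12\,\mathrm{dist}_{\widetilde M}(x,c_j)^2$. By the Hessian comparison theorem, on the convex ball containing $C_j$ one has $\mathrm{Hess}\,d_j\ge(1-\delta)\,{\bf g}$ with $\delta\to0$ as $\rho_0\to0$, so after shrinking $\rho_0$ and rescaling we may assume $\mathrm{Hess}\,d_j\ge{\bf g}$ on $\overline{V_j}$; moreover $|\nabla d_j|_{\bf g}=\mathrm{dist}(\cdot,c_j)\ge\mathrm{dist}(c_j,\overline{V_j})>0$. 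For cells $C_j$ meeting $\partial M$ I would place $c_j$ just \emph{outside} $M$, essentially along the outward normal of the (small, nearly flat) boundary face, so that $\nabla d_j$ points inward and $\langle\nabla d_j,\nu\rangle_{\bf g}<0$ there, giving $(d4)$. Because the $\overline{V_j}$ are disjoint, I take bump functions $\{\phi_j\}$ with $\phi_j\equiv1$ on a neighbourhood of $\overline{V_j}$ and pairwise disjoint supports, and define $d=\sum_j\phi_j d_j$, extended smoothly over the walls and then shifted by a large constant to force $\min_{\overline V}d>0$. On each $\overline{V_j}$ this equals $d_j$, so $(d1)$–$(d4)$ hold, the uniform gradient bound $(d3)$ being inherited over the finitely many pieces.

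Finally $\omega$ is obtained by thickening: let $\omega$ be an open $\delta$-neighbourhood of $\overline{M\setminus V}$. As $\delta\to0$ its measure tends to $\mathrm{meas}(\overline{M\setminus V})<\varepsilon_0$, while $\omega\cap V=\omega\setminus\overline{M\setminus V}$ has measure tending to $0$, with the boundary measures controlled likewise; choosing $\delta$ small yields $\omega\in\chi_{\varepsilon}(M)$ with $\overline{M\setminus V}\subset\omega$ and $\omega\cap V\in\chi_{\varepsilon-\varepsilon_0}(M)$. The main obstacle is the simultaneous fulfilment of $(d2)$ and $(d4)$: the uniform convexity $\mathrm{Hess}\,d\ge{\bf g}$ forces a small convexity-radius bound on the cells, while the inward-pointing requirement on $\partial M$ constrains the placement of the centres $c_j$ near the boundary, and one must check that these constraints are compatible and that the shrinking and gluing preserve smoothness of $\overline{\partial V\cap\mathrm{int}(M)}$ together with transversality to $\partial M$. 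The measure bookkeeping for $V$ and $\omega$ is then routine.
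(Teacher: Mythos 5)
Your proposal is correct and shares the paper's overall architecture (local strictly convex functions with gradients forced inward at the boundary, an almost-disjoint decomposition of $M$ into small pieces, cutoff gluing, and a thin thickening of the complement to get $\omega$), but it differs in two genuine ways. First, the local model: the paper works in normal coordinates, taking $d(q)=\frac{1}{2}\sum_j x_j^2(q)+m$ at interior points and $d(q)=x_1(q)+\frac{1}{2}\sum_j x_j^2(q)+m$ at boundary points of an isometric extension $\widetilde M$ with $\nu(p)=-e_1$, whereas you use the intrinsic function $\frac{1}{2}\,\mathrm{dist}_{\widetilde M}(\cdot,c_j)^2$ and the Hessian comparison theorem; these are the same idea in disguise, since $x_1+\frac{1}{2}|x|^2=\frac{1}{2}|x+e_1|^2-\frac{1}{2}$ is exactly a squared distance to a center placed outside $M$ along the outward normal. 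In fact your comparison-plus-rescaling step is \emph{more} careful than the paper's, which asserts $\mathrm{Hess}\,d(X,X)=|X|_{\bf g}^2$ throughout $V_p$ even though the normal-coordinate computation is exact only at the center; strictly one only gets $\mathrm{Hess}\,d\ge(1-\delta){\bf g}$ on a small neighborhood and must shrink and rescale exactly as you do. Second, the decomposition: the paper takes an arbitrary finite subcover $\{\widetilde W_j\}$, disjointifies it by peeling, $W_j=(M\setminus B)\cap(\widetilde W_j\setminus\bigcup_{i<j}\widetilde W_i)$ with $B$ a null union of wall boundaries, and shrinks each $W_j$ to $U_j$ with interior and boundary defect below $\varepsilon_0/2k$ via a cited lemma of \cite{cavalcanti2}, then inserts smooth $V_j$ with separating cutoffs; your geodesic cell complex with uniform $\eta$-shrinking buys more uniform measure bookkeeping at the cost of invoking the existence of a smooth cell decomposition subordinate to convex balls (standard, but it should be cited, and corners of the shrunk cells must be rounded to get the required smooth $\overline{\partial V\cap\mathrm{int}(M)}$). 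One point you should make explicit: cells touching $\partial M$ must be shrunk only along the interior walls, keeping their faces inside $\partial M$; if they retreated from $\partial M$ as well, then $\overline{M\setminus V}$ would contain a full collar and its boundary trace would be all of $\partial M$, destroying the bound $\overline{M\setminus V}\in\chi_{\varepsilon_0}(M)$ --- your $O(\eta)$ claim presumes this, and likewise the bump functions must be supported inside the normal balls where the squared distance is smooth, which your disjoint-support choice permits.
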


\begin{proof}
We first construct $V$ and $d$ locally with respect to interior points, and boundary points. 
Then we obtain global existence of $V$ and $d$ by using the compactness of $M$. 
The arguments are based on \cite[Section 6]{cavalcanti2}.   

\medskip
{\it Step 1:} We prove for any $p \in {\rm int}(M)$ there exists a neighborhood $V_p$ of $p$ and a function $d:V_p \to \mathbb{R}$ such that  $(d1)$-$(d3)$ hold with $V=V_p$. Indeed, given $p \in {\rm int}(M)$, there is an orthonormal basis $(e_1, \dots, e_N)$ of $ T_p M$ 
and a coordinate system $(x_1, \dots, x_N)$ over a neighborhood $ V_p$ contained in some chart $(U,\psi)$ 
such that $\partial x_i (p) = e_i (p)$, $i = 1,\dots,N$. 
We define the function $d: V_p \to \mathbb{R}$ 
by setting 
\begin{equation} \label{dddd}
	d(q)=\frac{1}{2}\sum_{j=1}^{N}x^2_j(q)+m,
\end{equation}
for some $m>0$. It is clear that 
$$
	|\nabla d(p)|_{\bf g}>0, \;\; \Delta d(p)=N, \;\; \inf_{q \in V_p} d(q)\ge m > 0. 
$$
Since Christoffel symbols with respect to 
$(x_1, \dots, x_N)$ satisfy $\Gamma^k_{ij}(p) = 0$ (see e.g. \cite{doCarmo} for details),  
it follows that ${\rm Hess} \, d(X, Y) = {\bf g} (X,Y)$ for all $X,Y \in T_pM$, which implies that $ {\rm Hess} \, d (X, X) = |X|_{\bf g}^2> 0 $ for all $ X \in T_p M $. Taking $V_p \subset \subset U$ small enough, $(d3)$ is satisfied with $V = V_p$.  
Because the coordinate system is the same for any element in $V_p$, 
we can use the same function $d$ on $V_p$ so that
$$
{\rm Hess} \, d(X,X)=|X|_{\bf g}^2, \;\; X \in T_{q} M, \;\; q \in V_p, 
$$
which proves $(d2)$. 
	
\medskip
	
{\it Step 2:} We show that given $p \in \partial M$, there exists a neighborhood $V_p$ of $p$ 
with smooth boundary $\overline{\partial V_p \cap {\rm int}(M)}$ which intercepts $\partial M$ transversally 
and a function $d:V_p \to \mathbb{R}$ satisfying $(d1)$-$(d4)$ with $V=V_p$. Indeed, fixed $p \in \partial M$, 
there exist a Riemannian manifold $\widetilde{M}$ and an isometric immersion 
$f: M \to \widetilde{M}$ such that $\overline{f(M)} \subset {\rm int}(\widetilde{M})$ (see \cite[Lemma 6.4]{cavalcanti2}). 
Take an orthonormal basis $(e_1, \dots, e_N)$ of $T_p \widetilde {M}$ such that $\nu(p) = -e_1$ 
is the outward normal vector at point $p$ with respect to $\partial M$. Proceeding as in Step 1, 
taking $\widetilde {M}$ instead of $M$, we obtain a neighborhood $\widetilde {V'_p} \subset \widetilde{M}$ of $p$. 
Due to the regularity of $\partial \widetilde{V'_p} \cap \partial M$ there is an open set 
$\widetilde{V_p}$ compactly embedded in $\widetilde{V'_p}$ with $p \in \widetilde{V_p}$ such that $\nu(q)=-e_1$ for all 
$q \in \overline{\widetilde{V_p}} \cap \partial M$. Moreover, we define $d:\widetilde{V_p} \to \mathbb{R}$ such that
\begin{equation} \label{ddd} 
	d(q)=x_1(q)+\frac{1}{2}\sum_{j=1}^{N}x^2_j(q)+m,
\end{equation}
for some $m>0$. It is evident that $\inf_{q \in V_p}|\nabla d(q)|_{\bf g}>0$, $\inf_{q \in V_p}d(q)\ge m$, $ \Delta d(p) = N $, 
and $ {\rm Hess} \, d(X, Y) = {\bf g} (X, Y) $ for all $ X, Y \in T_pM $. Then, as before, 
$(d1)$-$(d3)$ holds for $V = \widetilde{V_p}$. Additionally,
$$
\langle \nabla d (q), \nu(q) \rangle_{\bf g} < 0, \;\; q \in \widetilde{V_p} \cap \partial M.
$$ 
Finally, as shown in Figure \ref{mar2}, we can find a neighborhood $V_p \subset \widetilde{V_p} \cap M$ 
such that $\overline {\partial V_p \cap {\rm int}(M)}$ intercepts $\partial M$ transversally, completing the goal of Step 2. 
	
\begin{figure}[htb] 
\begin{center}
\includegraphics[scale=0.20]{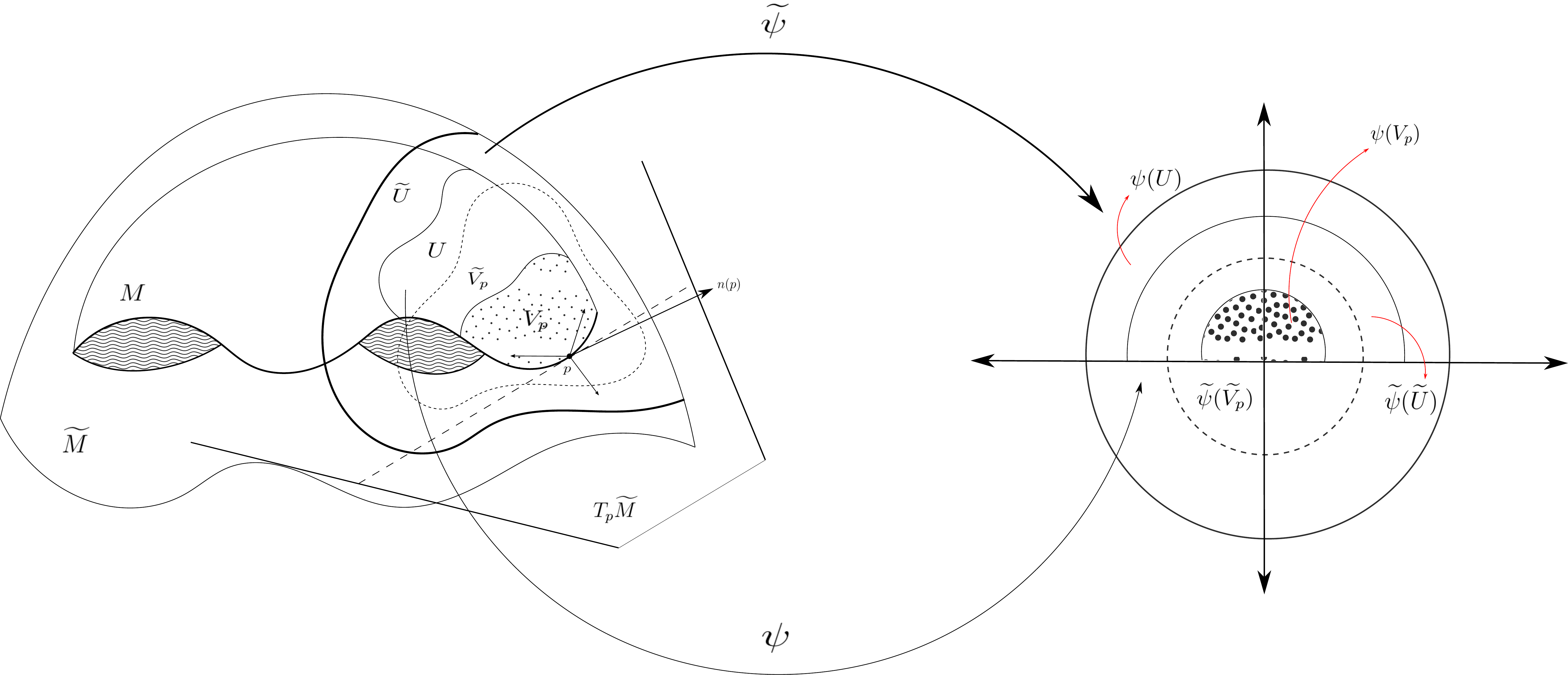} 
\end{center}
\caption{In the figure, $ (U, \psi) $ and $ (\widetilde{U}, \widetilde{\psi}) $ represent charts containing $p$, respectively in $M$ and $ \widetilde{M}$.  Note that $ \psi(V_p) \subset \psi(U) \subset \widetilde {\psi} (\widetilde{U}) $ so we can use the same coordinate system for every point in $V_p \subset M$.} \label{mar2}
\end{figure}
	
\medskip
	
{\it Step 3:} We recall that for relatively compact sets $A,B \subset M$ satisfying 
$$
dist (A,B) = \inf \{ dist(x,y) \, | \, x \in A \; y \in B \} > 0,  
$$ 
there exist open subsets $O_A \supset \supset A$ and $O_B \supset \supset B$ with smooth boundaries such that $dist(O_A, O_B)>0$. 
Moreover, there exists a smooth (cut-off) function $\rho : M \to \mathbb{R}$ such that 
$\rho|_{O_A}=1$, $\rho|_{O_B}=0$ and $\rho(M) \subset  [0, 1]$. See \cite[Lemma 6.9]{cavalcanti2}. 
The above sets $O_A$ and $O_B$ can be constructed, for any $\epsilon \in (0, dist(A,B)/3)$,  
such that 
\begin{equation} \label{oaob}
A \subset \subset O_A \subset \subset A_\epsilon \;\; \mbox{and} \;\; B \subset \subset O_B \subset \subset B_\epsilon ,
\end{equation} 
where
$$
A_{\epsilon}=\{x \in M \ | \ dist(x,A)<\epsilon \}, \ \ B_{\epsilon}=\{x \in M \ | \ dist(x,B)<\epsilon \},
$$
and $dist(x,Y)= \inf_{y \in Y} dist(x,y)$ with $dist(x,y)=|x-y|_{\bf g}$.

\medskip

{\it Step 4:} Conclusion: Repeating the strategy of Step 2, we can extend $M$ to a Riemannian manifold $\widetilde{M}$ such that, 
for each $p \in M$, one can choose a neighborhood $\widetilde{W_p}$ of $p$, 
and a function $d_p \in C^\infty(\widetilde{W_p})$ 
such that
\begin{itemize}
\item If $p \in {\rm int}(M)$, then choose $\widetilde{W_p}=V_p$ as in Step 1.
\item If $p \in \partial M$, then choose $\widetilde{W_p}=\widetilde{V_p} \subset {\rm int}(\widetilde{M})$ as in Step 2.
\end{itemize}
Then, 
due to the compactness of $M$, we can choose a finite sub-cover 
	$ \{\widetilde{W_j}\}^k_{j =1}$ of $ M $ such that if $ p \in \widetilde{W_j} $ for some $ j = 1 , ..., k $ denote by $ \widetilde{d_j} = d_{p}|_{W_j} $. Let $ B =\bigcup^ k_{j = 1} \partial \widetilde{W_j} \cap M $ where clearly $ M \setminus B $ is an open subset of $ M $. Denoting $ \left(M \setminus B \right) \cap \widetilde{W_1} $ as $ W_1 $ and $ \left (M \setminus B \right) \cap \left (\widetilde{W_j} \setminus \bigcup^{j-1}_{i = 1} \widetilde{W_i} \right) $ as $ W_j $ for $ j = 2, ..., k $, it gives that $ M \setminus B = \bigcup_{j = 1}^{k} W_j$.
		
	On the other hand, fixed $ \varepsilon> 0 $, for each $ \varepsilon_0 \in (0, \varepsilon) $ and $ W_j $ with $ j = 1, ..., k $, 
	it is possible to build an open $ U_j $ of $ M $ such that $ \overline{U_j} \subset W_j $ and $ {\rm meas}_M (W_j \setminus U_j) <\frac{\varepsilon_0}{2k} $ (see Figure \ref{mar4}). In addition, if $ W_j $ is a neighborhood of a boundary point of $ M $, then we can take $U_j$ such that 
	$ {\rm meas}_{\partial M} (\partial M \cap (W_j \setminus U_j)) <\frac{\varepsilon_0}{2k} $ 
	(see \cite[Lemma 6.7]{cavalcanti2} for more details).

\begin{figure}[htb] 
\begin{center}
\includegraphics[scale=0.4]{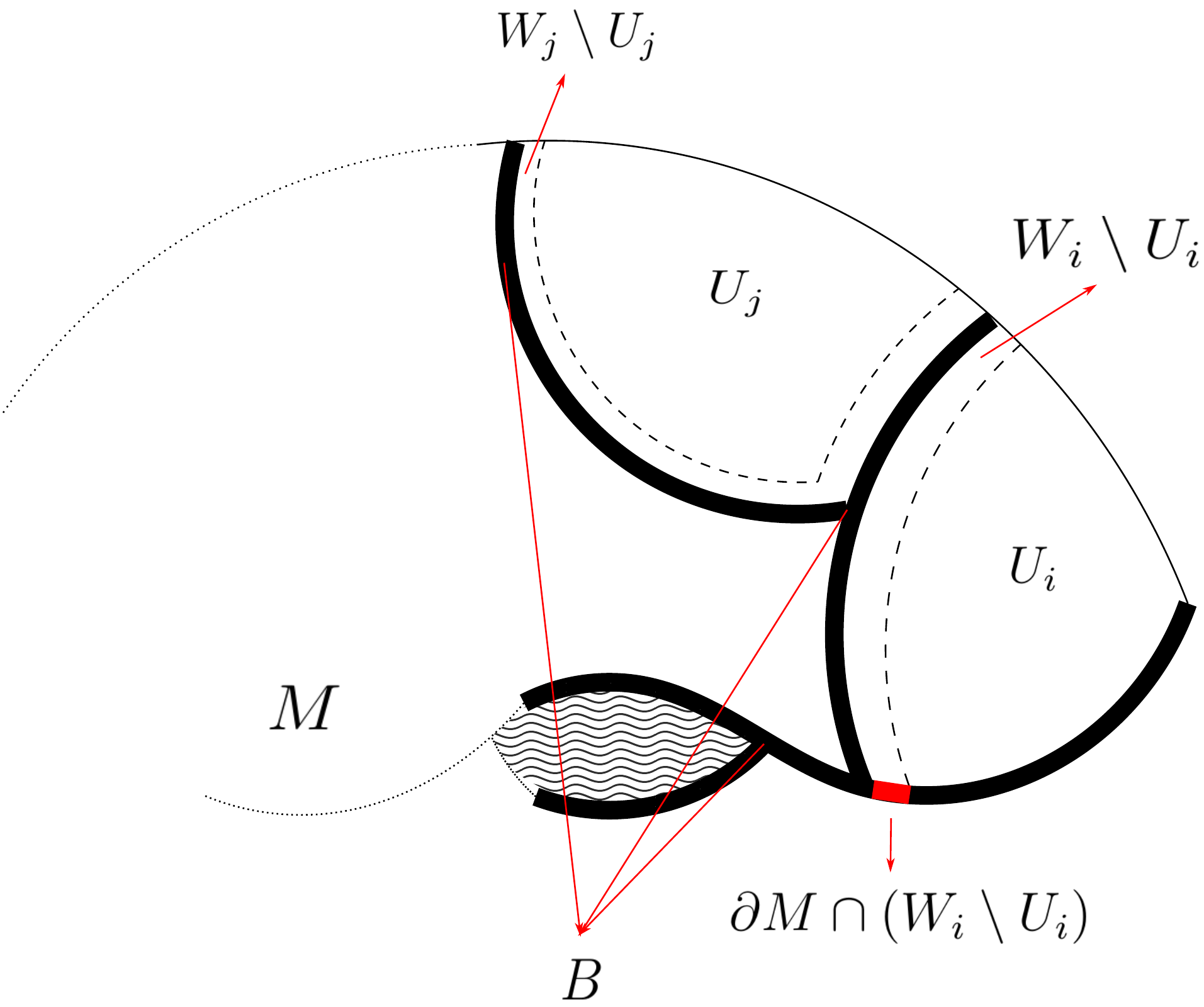} 
\end{center}
\caption{Given $i = 1,...,k $, we have total control over the interior measure of $ W_i \setminus U_i $ 
and boundary measure of $ \partial M \cap \left (W_i \setminus U_i \right) $, provided they are positive. Note that if $ \frac {\varepsilon_0} {2k} \ge \min \{{\rm meas}_M (W_i), {\rm meas}_{\partial M } (\partial M \cap W_i) \}> 0 $ it is possible to choose some $ 0 <\frac{\varepsilon'}{2k} <\{{\rm meas}_M(W_i), {\rm meas}_{\partial M}(\partial M \cap W_i) \} $ such that the measure of the aforementioned sets are less than $\frac{\varepsilon_0}{2k'}$ where $k'=\frac{k\varepsilon_0}{\varepsilon'}$   .} \label{mar4}
\end{figure}

	Because $ \overline{U_j} \subset W_j $, we can define $ d_j = \widetilde{d_j}|_{U_j} $. Also, from the compactness of $ B $ and $ \overline{U_j} $,  there are numbers $ \delta_j> 0$, $j = 1, ..., k$, such that $ d(B, \overline{U_j}) = \delta_j $. Then by 
	Step 3, exist open sets $ V_j \supset \supset U_j $ and $ O_j \supset \supset M \setminus W_j $ of $ M $ with smooth boundaries, 
	and a function $ \rho_j: M \to \mathbb{R} $ such that $ \rho_j|_{V_j} = 1, \rho_j|_{O_j} = 0 $ and $ \rho_j(M) \subset [0,1] $. 
	Note that in view of \eqref{oaob}, we can construct $V_j$ such that $ V_j \subset W_j $, so that $ \{V_j \}^k_{j = 1} $ is a disjoint family of open subsets	and $ \widetilde{d_j} $ is defined on each $ V_j $.

	Note that if $ V_j $ is a neighborhood intersecting $ \partial M $, then it is possible to assume that $V_j$ has smooth boundary $ \overline{\partial V_j \cap {\rm int}(M)} $ that intercepts $ \partial M $ transversally. Thus, we define $ d_j = \widetilde{d_j}|_{\overline{V_j}}$, $ \rho = \sum_{j = 1}^{k} \rho_j $ 
	and
$$ 
V=\bigcup^k_{j=1} V_j,
$$
so that $\rho|_V=1$ and (\ref{dddd}) is satisfied.
	
For the construction of $ d $, it is enough to define
$$
d(x) = \left\{ 
\begin{array}{ll}
d_j(x)\rho(x) & \textrm{if} \; x \in W_j, \\
0 & \textrm{otherwise},
\end{array}\right.
$$
which clearly satisfy $(d1)$-$(d4)$. 
	
Finally, from the construction of $V$, there is an open set  
	$ \omega \supset \overline {M \setminus V}$ such that $ \omega \cap V \in \chi_{\varepsilon- \varepsilon_0} (M) $. 
	From (\ref{dddd}) we see that $ \omega $ is $\varepsilon$-controllable. 
The result is proved.
\end{proof}

\begin{Remark} \rm $(a)$ The choice of $ \varepsilon_0 \in (0, \varepsilon)$ in Theorem \ref{thm-geometry} is independent of any other condition, that is, the result is valid for any 
$\varepsilon_0 \in (0, \varepsilon)$. This value represents the measure that is to be granted to the set $\overline{M \setminus V}$. As we will see in Section \ref{sec-obs}, 
the damping must be effective in a neighborhood $\omega$ of $\overline{M \setminus V}$ 
in order to prove an observability inequality. $(b)$ We note that once $\varepsilon_0$ is chosen, the construction of $V$, $d$ and the choice of $\omega$ involve mainly 
three properties:
\begin{itemize}
\item $\omega$ is an open subset of $M$,	
\item $\left( \overline{M \setminus V}\right) \cup (\omega \cap V) = \omega$ with $\left( \overline{M \setminus V}\right) \cap (\omega \cap V) = \emptyset$, 
\item $\overline{M \setminus V} \in \chi_{\varepsilon_0}(M)$ and $\omega\cap V \in \chi_{\varepsilon-\varepsilon_0}(M)$.
\end{itemize}
We note that it is possible to build different sets $ \omega $ such that $ \omega \cap V \in \chi_{\varepsilon- \varepsilon_0}(M) $. \qed
\end{Remark}

This motivates the following definition. 

\begin{Definition} \label{admissible-epsilon} 
Given $\varepsilon >0$, the family   
\begin{equation} \label{Vd} 
[\omega_{\varepsilon}] = 
\Big\{ 
\omega \in \chi_{\varepsilon}(M) \, | \, \mbox{$\omega$ is given by Theorem \ref{thm-geometry} for some $\varepsilon_0 \in (0,\varepsilon)$} 
\Big\},
\end{equation}
is called the class of admissible $\varepsilon$-controllable regions.  
\end{Definition}
The above definition will be used to characterized the idea of a sharp measure-controlled damping region.

\subsection{Bridge to (GCC)} \label{subsec-gcc} 
In this section, we show in Theorem \ref{thm-result2} that our sharp measure-controlled damping region satisfies (GCC). 
To simplify a little our presentation we shall assume the reader is familiar with generalized geodesics 
on compact manifold with boundary. Details can be found in, for instance, \cite{blr,miller}. 
Let $(M,\bf{g})$ be a $N$-dimensional compact manifold with smooth boundary $\partial M$. To our purpose, a generalized geodesic (ray of geometric optics) is a continuous trajectory $t \mapsto \gamma(t)$ which behaves as a geodesic of speed 1 in ${\rm int}(M)$, with the following additional features: 
\begin{itemize}
\item If $\gamma(t)$ hits $\partial M$ transversally at time $ t_0 $, then either it reflects as a billiard ball or 
it escapes from $M$ for $t>t_0$.	
\item If $\gamma(t)$ hits $\partial M$ tangentially at time $t_0$, then eventually, either it returns to ${\rm int}(M)$ or it escapes from $M$ at time $t>t_1>t_0$. 
\end{itemize}
In general, the generalized geodesics are not uniquely defined. However, uniqueness can be observed under additional assumptions, for instance: the metric ${\bf g}$ and boundary $\partial M$ are real analytic, or ${\bf g}$ and $\partial M$ are $C^\infty$ 
and $\partial M$ does not have contacts of infinite order with its tangents. See for instance \cite{burq,LH}. 

\smallskip

In what follows, it may be convenient defining (GCC) with an explicit ``control'' time $T$ as in \cite{miller}.   

\begin{Definition} [GCC] \label{def-gcc} 
Let $\omega$ be an open set of $M$ and $T>0$. A pair $(\omega,T)$ satisfies the geometric control condition  
if every generalized geodesic of length greater than $T$ intersects $\omega$.
\end{Definition}

Now, we borrow some ideas and results from Miller \cite{miller}. First, the bicharacteristic condition of Bardos, Lebeau and Rauch \cite{blr} is presented in terms of generalized geodesics, namely (cf. \cite[Definition 2.2]{miller}).  

\begin{Definition} [Geodesic condition]\label{def-geodesic}
Let $\Gamma$ be an open region of the boundary $\partial M$ and $T>0$. A pair $(\Gamma,T)$ satisfies the geodesic condition 
if every generalized geodesic of length greater than $T$ escapes from $M$ through $\Gamma$.  
\end{Definition}  
We note that from above definition, any open set $\omega \subset M$ containing $\Gamma$ satisfies (GCC). 
Also, as discussed in \cite{miller}, on can think $\partial M \setminus \Gamma$ as a border ``obstacle'' that prevents  
the generalized geodesics to leave $M$. Then we see $\Gamma$ as a border ``hole'' that allows generalized geodesics to escape from $M$. 
Formally we have the following definition (cf. \cite[Definition 4.1]{miller}).  

\begin{Definition} [Escape potential condition] \label{def-EVF}
Suppose that metric {\bf g} is $C^2$ and the boundary $\partial M$ is $C^3$. 
Let $\Gamma$ be an open subset of $\partial M$ and $T>0$. One says the pair $(\Gamma,T)$ satisfies 
the escape potential condition if there is a $C^2$-function $d:M \to \mathbb{R}$ such that
\begin{itemize}		
\item $|\nabla d|_{\bf g}\le T/2$, \;\; $\forall \, x \in M$,
\item ${\rm Hess} \, d(X,X) \ge |X|^2_{\bf g}$, \;\; $\forall \, X \in TM$,
\item $\left\lbrace x \in \partial M \, | \, \langle \nabla d , \nu \rangle > 0 \right\rbrace  \subset \Gamma$.
\end{itemize}
\end{Definition}

Under above geometric interpretation we formalize the idea of an obstacle condition.

\begin{Definition} [Obstacle condition] \label{def-obstacle}
Let $\Gamma_0$ be a subset of $\partial M$ and $T>0$. We say the pair $(\Gamma_0,T)$ satisfies the {\it obstacle condition} 
if there is a $C^3$-function $d:M \to \mathbb{R}$ such that
\begin{itemize}
	\item $|\nabla d|_{\bf g}\le T/2$, \;\; $\forall \, x \in M$,
	\item ${\rm Hess} \, d(X,X)\ge |X|^2_{\bf g}$, \;\; $\forall \, X \in TM$,
	\item $\Gamma_0 \subset \left\lbrace x \in \partial M \, | \, \langle \nabla d , \nu \rangle \le 0 \right\rbrace$.
\end{itemize}
\end{Definition}

\begin{Remark} \label{remark-miller} \rm 
$(a)$ It was proved by Miller \cite[Proposition 4.2]{miller} that $(\Gamma,T)$ satisfying {\it escape potential condition} also satisfies the 
{\it geodesic condition}. The converse is true if one assumes uniqueness for generalized geodesics. 
$(b)$ Suppose that $(\Gamma_0,T)$ satisfies the {\it obstacle geometric condition} with respect to a escape function $d$. If in addition $\Gamma' := \left \{ x \in \partial M \, | \; \langle \nabla d , \nu \rangle > 0 \right\} \neq \emptyset$,  
then $(\Gamma,T)$ clearly satisfies the {\it escape potential condition}, for any open subset $\Gamma \subset \partial M$ 
containing $\Gamma'$. \qed
\end{Remark}

We are in position to establish our main result of Section \ref{subsec-gcc}. 

\begin{Theorem} \label{thm-result2} Given $\varepsilon>0$, under the hypotheses of Theorem \ref{thm-geometry}, 
for each admissible damping region $\omega \in [\omega_{\varepsilon}]$ there exists $T=T(\omega)>0$ such that the pair $(\omega,T)$ 
satisfies (GCC).
\end{Theorem}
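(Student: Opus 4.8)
The plan is to reduce (GCC) for $\omega$ to a trajectory-wise convexity estimate for the escape potential $d$ produced in Theorem \ref{thm-geometry}, exploiting the inclusion $\overline{M\setminus V}\subset\omega$. First I would record the decisive topological observation: since $\omega$ is open and contains $\overline{M\setminus V}$, any point of $M$ not in $\omega$ lies in $\mathrm{int}(V)$, and in particular $M\setminus\overline V\subset\overline{M\setminus V}\subset\omega$ while $\partial V\cap\mathrm{int}(M)\subset\overline{M\setminus V}\subset\omega$. Consequently a generalized geodesic $\gamma$ that \emph{never} meets $\omega$ is forced to remain inside $\overline V$ throughout its lifespan, and the only portion of $\partial M$ it may touch is the ``obstacle'' $\Gamma_0:=\partial M\cap\overline V$, along which it can only reflect; the complementary boundary $\partial M\setminus\overline V$ lies in $\omega$ and thus acts as an escape ``hole''. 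Since $d\in C^\infty(M)$ by $(d1)$ and $\overline V$ is compact, the quantity $\sup_{\overline V}|\nabla d|_{\bf g}$ is finite, and by $(d3)$ it is positive; I set $T:=2\sup_{\overline V}|\nabla d|_{\bf g}>0$, so that $|\nabla d|_{\bf g}\le T/2$ holds on $\overline V$. With this normalization the restriction of $d$ to $\overline V$ together with $\Gamma_0$ satisfies, \emph{on $\overline V$}, exactly the requirements of the obstacle condition of Definition \ref{def-obstacle}: the Hessian lower bound is $(d2)$, and the sign condition $\Gamma_0\subset\{\langle\nabla d,\nu\rangle\le 0\}$ follows from $(d4)$.

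The core step is the convexity argument along such a trapped geodesic, which is precisely the trajectory-wise mechanism behind Miller's Proposition 4.2 invoked in Remark \ref{remark-miller}; I only need that argument applied to a curve lying in $\overline V$, so that merely the properties of $d$ on $\overline V$ enter (the global Hessian bound required in Definition \ref{def-obstacle} is not available for the cut-off function $d$, but it is not needed along a trajectory confined to $\overline V$). Given a unit-speed generalized geodesic $\gamma$ confined to $\overline V$, consider $\phi(t):=d(\gamma(t))$. On each interior arc one has $\phi''(t)={\rm Hess}\,d(\dot\gamma,\dot\gamma)\ge|\dot\gamma|_{\bf g}^2=1$ by $(d2)$, while at a transversal reflection off $\Gamma_0$ the incoming velocity satisfies $\langle\dot\gamma^-,\nu\rangle>0$, so the jump of $\phi'$ equals $-2\langle\dot\gamma^-,\nu\rangle\langle\nabla d,\nu\rangle$, which is \emph{positive} by $(d4)$. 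Hence $\phi'$ is nondecreasing with $\phi'(t)\ge\phi'(0)+t\ge -T/2+t$, whereas $|\phi'(t)|=|\langle\nabla d,\dot\gamma\rangle|\le|\nabla d|_{\bf g}\le T/2$ everywhere on $\overline V$. Combining the two bounds yields $t\le T$, i.e. $\gamma$ cannot remain inside $\overline V$ for an arc length exceeding $T$.

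To conclude, let $\gamma$ be any generalized geodesic of length greater than $T$. By the previous paragraph it must leave $\overline V$, and the first time it does so it reaches either $\partial V\cap\mathrm{int}(M)$ or $\partial M\setminus\overline V$; both sets lie in $\omega$, so $\gamma$ meets $\omega$. This is exactly Definition \ref{def-gcc}, whence $(\omega,T)$ satisfies (GCC) with $T=T(\omega)=2\sup_{\overline V}|\nabla d|_{\bf g}$. The main obstacle I anticipate is not the convexity estimate itself but the careful treatment of non-transversal (grazing) contacts with $\Gamma_0$ and of the generalized reflection law for generalized geodesics (in the sense of Melrose and Sjostrand): at a tangential contact $\langle\dot\gamma,\nu\rangle=0$, so no downward jump of $\phi'$ occurs, yet justifying that $\phi$ stays convex through such points, and that a geodesic cannot accumulate glancing contacts so as to defeat the growth $\phi'(t)\ge\phi'(0)+t$, is the delicate point. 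Here I would rely on the hypotheses ensuring well-posed generalized geodesics and on Miller's analysis cited in Remark \ref{remark-miller}, rather than reproving the propagation of singularities.
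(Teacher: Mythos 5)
Your proposal is correct in substance and runs on the same engine as the paper's proof --- the convexity of an escape function along rays, i.e.\ the mechanism of Miller's Proposition 4.2 --- but you organize it differently. You inline the convexity estimate: a ray avoiding $\omega$ stays in $M\setminus\omega\subset M\setminus\overline{M\setminus V}\subset V$, so only reflections off $\partial M\cap\overline{V}$ occur; then $\phi=d\circ\gamma$ satisfies $\phi''\ge 1$ on interior arcs by $(d2)$, $\phi'$ jumps by $-2\langle\dot\gamma^-,\nu\rangle\langle\nabla d,\nu\rangle>0$ at transversal reflections by $(d4)$, and $|\phi'|\le T/2$ with $T=2\sup_{\overline{V}}|\nabla d|_{\bf g}$ forces exit through $\partial V\subset\overline{M\setminus V}\subset\omega$ within length $T$. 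The paper never writes this estimate: instead it builds, inside each component $V_j$, a compact smooth sub-manifold $\Omega_j$ with $V_j\setminus\Omega_j\subset\omega$ and $\partial\Omega_j\cap{\rm int}(M)\subset\omega$, restricts $d$ to $\Omega_j$, verifies the obstacle condition for $\Gamma^j_0=\partial\Omega_j\cap\partial M$, upgrades it to the escape potential condition via Remark~\ref{remark-miller}$(b)$, and cites Miller for the geodesic condition on $(\Omega_j,{\bf g}|_{\Omega_j})$; its closing trichotomy ($\gamma(t)\in\Omega_j$, $\gamma(t)\in V_j\setminus\Omega_j$, $\gamma(t)\in M\setminus V$) is exactly your exit argument, and your $T$ matches its $T=\max_j T^j$.

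The one genuine gap is the step you yourself flag: glancing contacts. Your fallback --- ``rely on Miller's analysis cited in Remark~\ref{remark-miller}'' --- is not a verbatim citation in your setup. Definition~\ref{def-EVF} and Miller's Proposition 4.2 require $|\nabla d|_{\bf g}\le T/2$ and ${\rm Hess}\,d(X,X)\ge|X|^2_{\bf g}$ for \emph{all} $X\in TM$, i.e.\ globally on a compact manifold with smooth boundary; the cut-off function $d$ of Theorem~\ref{thm-geometry} satisfies the Hessian bound only on $\overline{V}$ (it vanishes identically off the sets $W_j$, where ${\rm Hess}\,d=0$), and $\overline{V}$ itself is not a smooth manifold with boundary but has corners along $\overline{\partial V\cap{\rm int}(M)}\cap\partial M$, so Miller's result applies to neither $M$ nor $\overline{V}$ directly. (The corners are harmless for your trajectory-wise estimate, since a trapped ray never touches $\partial V$, but they block the citation.) The clean repair is precisely the paper's extra construction: trapped rays lie in $V\setminus\omega\subset\bigcup_j\Omega_j$, generalized geodesics of $M$ confined to $\Omega_j$ are generalized geodesics of the compact manifold $(\Omega_j,{\bf g}|_{\Omega_j})$ on which $d^j=d|_{\Omega_j}$ satisfies Miller's hypotheses globally, and escape from $\Omega_j$ happens either through $\Gamma^j_1\subset\omega$ or into $V_j\setminus\Omega_j\subset\omega$. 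With that single insertion your argument becomes the paper's proof; without it, you would have to redo the Melrose--Sj\"ostrand propagation analysis at glancing points along trapped trajectories, which is exactly what both you and the paper wish to avoid.
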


\begin{proof}
Fixed $\omega \in [\omega_{\varepsilon}]$, by construction there is an open disconnection $V \subset M$ such that 
$V = \bigcup^k_{j=1} V_j$ where $V_j$ are smooth connected open sets of $M$. 
By construction, for each $j$, there is a smooth connected compact $N$-dimensional Riemannian sub-manifold $(\Omega_j,{\bf g})$ 
of $M$, with boundary $\partial \Omega_j$, such that:
\begin{itemize}
	\item $\Omega_j \subset V_j$ and $V_j \setminus \Omega_j \subset \omega$,
	\item $\partial \Omega_j \cap {\rm int}(M) \subset \omega$,
	\item $\partial \Omega_j \cap \partial M \subset \partial V_j$.
\end{itemize}
Let us define $d^j = d|_{\Omega_j}$ where $d: M \to \mathbb{R}$ is the smooth (escape) function given by Theorem \ref{thm-geometry}. 
Then ${\rm Hess} \, d^j (X,X) \ge |X|^2_{\bf g}$ for all $X \in T \Omega_j$. 
Also, $\Gamma^j_0 = \partial \Omega_j \cap \partial M$ is closed and   
$$
\Gamma^j_0 \subseteq \left\{ x \in \partial \Omega_j \, | \; \langle \nabla d^j , \nu \rangle \le 0 \right\}.  
$$
Hence there exists $T^j>0$ such that the pair $(\Gamma^j_0, T^j)$ satisfies the {\it obstacle condition} on $(\Omega_j,{\bf g}|_{\Omega_j})$. 

We are going to show that $(\omega \cap \Omega_j,T^j)$ satisfies (GCC). Indeed, using local coordinates, we can see that the unit normal $\nu$ has sign $-1$ on $\partial V_j \cap \partial M$ 
and sign $+1$ on $\partial \Omega_j \cap {\rm int}M$. This implies that  
$\Gamma' = \left\{ x \in \partial \Omega_j \, | \, \langle \nabla d^j , \nu \rangle > 0 \right\}$ is nonempty. Then the open set $\Gamma^j_1 = \partial \Omega_j \setminus \Gamma^j_0$ contains $\Gamma'$.  
From Definition \ref{def-EVF} the pair $(\Gamma^j_1,T^j)$ satisfies the {\it escape potential condition}.  
Consequently (cf. Remark \ref{remark-miller}) we infer that $(\Gamma^j_1,T^j)$ satisfies the {\it geodesic condition}. 
In particular, since $\Gamma^j_1 \subset \omega$, the pair $(\omega \cap \Omega_j, T^j)$ satisfies (GCC). 
	
To conclude, we show that $(\omega,T)$ satisfies (GCC) with $T= \max \{T^1, ...,T^k\}$. 
Let $t \mapsto \gamma(t)$ be a generalized geodesic in $M$, $t \in \mathbb{R}$. We have three possibilities: 
\begin{itemize}
\item If $\gamma(t) \in \Omega_j$, then $\gamma(t+T^j) \in \omega$ because $(\omega \cap \Omega^j , T^j)$ satisfies (GCC).
\item If $\gamma(t) \in V_j \setminus \Omega_j$, then $\gamma(t) \in \omega$ since by construction $V_j \setminus \Omega_j \subset \omega$.
\item If $\gamma(t) \in M \setminus V$, then $\gamma(t) \in \omega$ since from Theorem \ref{thm-geometry} we have $\omega \supset (M \setminus V)$.
\end{itemize}
This completes the proof.
\end{proof}

\begin{Remark} \rm 
In Theorem \ref{thm-result2}, it possible that $M \setminus V$ contains closed (trapping) generalized geodesics. But in this case Theorem \ref{thm-geometry} 
guarantees that it remains inside $\omega$. As a consequence, we have proved that all $\omega \in [\omega_{\varepsilon}]$ 
satisfies (GCC), but we do not know if $(\omega \cap \partial M, T_0)$ satisfies the {\it geodesic condition} for some $T_0 >0$. \qed
\end{Remark}

\subsection{Decomposition in overlapping sets} 

As mentioned before, our construction seeks fulfill the assumptions 
of an observability result in \cite{ty2002}. In a first approach, 
it is required that function $d$ 
has no critical points in $M$. Note that our Theorem \ref{thm-geometry} grants only $d$ has no critical points in $V$. 
Nevertheless, this restriction can be weakened to a framework of overlapping sub-domains. 

\begin{Definition}
We say that $M$ admits a family of overlapping sub-domains $\{\Omega_j\}_{j=1}^k$ if 
\begin{itemize}
\item $M = \bigcup^k_{j=1} \Omega_j$,
\item for each $j \in \{1, \ldots, k\}$, there exists at least one $i \in \{1,\ldots, k\}\setminus \{j\}$ such that $\Omega_j \cap \Omega_i \ne \emptyset$.
\end{itemize}
\end{Definition}

\begin{Theorem} \label{thm-overlapp} 
Given $\varepsilon>0$ and $\omega \subset [\omega_{\varepsilon}]$, let 
$V_j,d_j$, $j=1, ... , k$, given by Theorem \ref{thm-geometry}. Then there exists a finite collection overlapping sub-domains $\{\Omega_j\}^k_{j=1}$ of $M$ such that
\begin{enumerate}
\item $V_j \subset \Omega_j$ for all $j=1,...,k$,
\item $\Omega_j \cap \omega \neq \emptyset$ for all $j=1,...,k$.
\end{enumerate}
Moreover, there exist functions $d_j : M \to \mathbb{R}$, $j=1, \cdots ,k$, such that
\begin{enumerate}
\item[3.] $d_{j} \in C^\infty(M), $ 
\item[4.] $\nabla^2d_j(X,X)\ge |X|_{\bf g}, \quad \forall \, X \in T_q M, \;\;  \forall \, q \in \Omega_j$,   
\item[5.] $\inf _{\Omega_j} |\nabla d_{j}| >0, \ \ \ \inf_{q \in \Omega_j}d_j(q)>0$,
\item[6.] $\langle \nabla d_j(x),n(x)\rangle <0$ on $\partial M \cap \overline{V_j}$.
\end{enumerate}
\end{Theorem}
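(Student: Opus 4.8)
The plan is to thicken each of the pairwise disjoint sets $V_j$ into a slightly larger open set $\Omega_j$ so that the enlarged family covers $M$ with the required overlaps, and simultaneously to promote each local building block $\widetilde{d_j}$ from Theorem \ref{thm-geometry} to a globally defined smooth function $d_j:M\to\mathbb{R}$ that still obeys the Hessian, gradient, positivity and boundary--sign estimates on $\Omega_j$. I would keep in mind that properties 3.--5. are asked on the enlarged set $\Omega_j$, whereas the sign condition 6. is only required on $\partial M\cap\overline{V_j}$; the latter is therefore essentially inherited from $(d4)$ of Theorem \ref{thm-geometry}, and the real work concerns transporting the \emph{strict} interior estimates from the compact set $\overline{V_j}$ to a genuine neighborhood.

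First I would fix, for each $j$, the chart $\widetilde W_j\supset\overline{V_j}$ and the local function $\widetilde{d_j}$ produced in Step 1--Step 2 of the proof of Theorem \ref{thm-geometry}. Since on $\overline{V_j}$ one has ${\rm Hess}\,\widetilde{d_j}(X,X)\ge|X|_{\bf g}^2$, $|\nabla\widetilde{d_j}|_{\bf g}>0$ and $\widetilde{d_j}>0$, all of which are open conditions, continuity together with compactness of $\overline{V_j}$ yields an open neighborhood $G_j$ with $\overline{V_j}\subset G_j\subset\subset\widetilde W_j$ on which ${\rm Hess}\,\widetilde{d_j}(X,X)\ge c_j|X|_{\bf g}^2$ for some $c_j\in(0,1]$, while still $|\nabla\widetilde{d_j}|_{\bf g}>0$ and $\widetilde{d_j}>0$. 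Rescaling $\widehat{d_j}:=\lambda_j\widetilde{d_j}$ with $\lambda_j\ge 1/c_j$ restores the sharp threshold ${\rm Hess}\,\widehat{d_j}(X,X)\ge|X|_{\bf g}^2$ on $G_j$, and since $\lambda_j>0$ this only reinforces the gradient and positivity bounds and preserves the sign in $(d4)$. I would then set $\Omega_j:=G_j$ and globalize by a cut-off: choosing $\phi_j\in C^\infty_c(\widetilde W_j)$ with $\phi_j\equiv 1$ on a neighborhood of $\overline{\Omega_j}$ and setting $d_j:=\phi_j\widehat{d_j}$ (extended by $0$ outside $\widetilde W_j$) gives $d_j\in C^\infty(M)$ coinciding with $\widehat{d_j}$ on $\Omega_j$. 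Hence 3.--5. hold on $\Omega_j$, and 6. holds on $\partial M\cap\overline{V_j}\subset\overline{\Omega_j}$.

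It then remains to verify the covering and overlap properties of $\{\Omega_j\}$. Property 1. is immediate from $\overline{V_j}\subset G_j=\Omega_j$. For property 2., observe that $\partial V_j\cap{\rm int}(M)\subset\overline{V_j}\subset\Omega_j$ and, since the closures $\overline{V_i}$ are pairwise disjoint (each $\overline{V_j}\subset W_j$ with the $W_j$ disjoint), these boundary points lie in $M\setminus V\subset\omega$; thus $\Omega_j\cap\omega\neq\emptyset$. For the covering $M=\bigcup_j\Omega_j$ I would write $M=V\cup(M\setminus V)=\bigcup_jV_j\cup(M\setminus V)$, so it suffices that the enlarged sets absorb the gap $M\setminus V$: every point of $M\setminus V$ lies in some chart $\widetilde W_j$ at small distance from $\overline{V_j}$, hence inside $G_j=\Omega_j$ once the gap is thin enough. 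Finally, $M$ being connected and covered by the open family $\{\Omega_j\}$ (with $k\ge2$), no $\Omega_j$ can be disjoint from all the others, which is precisely the overlapping requirement.

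The main obstacle is the competing demands on the size of $\Omega_j$: the interior estimates 3.--5. hold only on the neighborhood $G_j$ where the almost-Euclidean coordinate expression of $\widetilde{d_j}$ keeps a positive-definite Hessian and a non-vanishing gradient, which forces $\Omega_j$ to stay within a definite reach of $\overline{V_j}$; yet the covering $M=\bigcup_j\Omega_j$ pushes the enlarged sets to fill the gap $M\setminus V$. I would reconcile these by invoking the freedom left in Theorem \ref{thm-geometry}: since $\overline{M\setminus V}$ can be built with arbitrarily small measure and, accordingly, arbitrarily small width (it is a thin neighborhood of the mesh $B=\bigcup_j\partial\widetilde W_j\cap M$ together with the buffer layers $W_j\setminus V_j$), one may carve the $V_j$ so that the width of $M\setminus V$ is smaller than the fixed reach of the $G_j$; refining the finite cover $\{\widetilde W_j\}$ beforehand only enlarges that reach, as the patches become more nearly Euclidean. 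With the gap thus dominated one has $\bigcup_jG_j\supset M\setminus V$, whence $M=\bigcup_j\Omega_j$, and the overlaps follow from connectedness as above.
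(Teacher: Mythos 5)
Your construction of the functions $d_j$ (persistence of the strict Hessian, gradient and positivity bounds on a neighborhood $G_j$ of the compact set $\overline{V_j}$, rescaling by $\lambda_j\ge 1/c_j$, then cutting off to get $d_j\in C^\infty(M)$) is sound, and your verification of properties 1, 2 and 6 is fine. The genuine gap is in the covering step $M=\bigcup_j\Omega_j$. You take $\Omega_j=G_j$, a neighborhood of $\overline{V_j}$ of some fixed ``reach,'' and then close the gap $M\setminus V$ by ``invoking the freedom left in Theorem \ref{thm-geometry}'' to re-carve the $V_j$ so that $M\setminus V$ is thinner than that reach. But the theorem does not allow this: $\omega\in[\omega_\varepsilon]$ and the sets $V_j$ (and functions) are \emph{given} data, fixed by the construction that produced $\omega$, and the overlapping family must be produced for that configuration. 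Re-running the construction of Theorem \ref{thm-geometry} with a tighter carving changes $V$ and hence $\omega$, so your argument would only prove the statement for a specially re-built subfamily of admissible regions, not for every $\omega\in[\omega_\varepsilon]$. Even setting that aside, the argument is not quantitatively closed: Theorem \ref{thm-geometry} controls the \emph{measure} of $\overline{M\setminus V}$ (it lies in $\chi_{\varepsilon_0}(M)$), not its width, and you give no uniform lower bound on the reaches of the $G_j$ as the $V_j$ are enlarged toward $\partial W_j$ --- precisely the regime in which the persistence neighborhoods could shrink. Note also that a point of the mesh $B=\bigcup_j\partial\widetilde{W_j}\cap M$ lies outside the chart $\widetilde{W_j}$ that houses $G_j\subset\subset\widetilde{W_j}$, so it must be absorbed by $G_i$ for a \emph{different} index $i$, which again requires the uncontrolled smallness of the gap relative to the reach of $G_i$.

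The paper sidesteps all of this: it does not shrink to neighborhoods of the $\overline{V_j}$ at all, but takes $\Omega_j=\widetilde{W_j}$, the full chart neighborhoods that were chosen in Step 4 of the proof of Theorem \ref{thm-geometry} as a \emph{finite subcover} of $M$ (enlarged to $W_j\subset\widetilde V'_j\cap M$ in the boundary case). The covering of $M$ and the overlaps are then automatic, and the explicit formulas (\ref{dddd}) and (\ref{ddd}) already furnish, on each whole chart, the Hessian identity, the gradient and positivity bounds, and the boundary sign condition on $\widetilde{W_j}\cap\partial M\supset\overline{V_j}\cap\partial M$; a partition of unity then globalizes the $d_j$ exactly as in your cutoff step. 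If you want to salvage your route, you would have to either prove a uniform reach estimate compatible with the given carving (which is not available from the statement of Theorem \ref{thm-geometry} alone) or, much more simply, observe that the conditions you propagate from $\overline{V_j}$ by compactness already hold on all of $\widetilde{W_j}$ by the explicit coordinate formulas, and take $\Omega_j$ to be the charts themselves.
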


\begin{proof} 
We recall that in the proof of Theorem \ref{thm-geometry}, 
there exists $k \in \mathbb{N}$ such that $V = \bigcup^k_{j=1} V_j$ where each 
$V_j \subset \widetilde{W_j}$, $j=1,...,k$, satisfies ({\it 1.})-({\it 2.}) 
with $\Omega_j = \widetilde{W_j}$. In addition, there exists a family 
of functions $\widetilde{d}_j:\widetilde{W_j}\to \mathbb{R}$ such that 
$$  
\widetilde{d}_j = 
\left\{ 
\begin{array}{l} 
(\ref{ddd}) \;\; \textrm{if} \;\; \widetilde{W_j} \cap \partial M \neq \emptyset, \\
(\ref{dddd}) \;\; \textrm{if} \;\; \widetilde{W_j}\cap \partial M = \emptyset,
\end{array} 
\right.
$$
and satisfies ({\it 4.})-({\it 5.}).

\medskip

On the other hand, if $\widetilde{W}_j \cap \partial M \neq \emptyset$, then as in Step 2 of the proof of Theorem \ref{thm-geometry}, there exists $\widetilde V'_j \subset \widetilde{M}$ 
such that $\widetilde{W}_j \subset \widetilde V'_j \cap M$. 
Then, there exists $W_j\subset \widetilde V'_j \cap M$ such that 
$\widetilde{W}_j \subset W_j$ and this allow us define 
$d_j:W_j \to \mathbb{R}$ as in (\ref{ddd}) so that ({\it 4.})-({\it 5.}) hold. 
Moreover one has 
$$
\langle \nabla {d_j}(q),n(q) \rangle < 0, \;\; q \in \widetilde{W}_j.
$$
It is worthy noting that above sign condition does not hold for all $q \in W_j$.

\medskip

Then, defining $\Omega_j=\widetilde{W_j}$ with $\widetilde{W_j}=W_j$ when $\widetilde{W_j}\cap \partial M \neq \emptyset$ and 
$$
	d_j=\left\{\begin{array}{l}
	d_j \ \ \  \textrm{ if } \ \Omega_j\cap \partial M \neq \emptyset, \\
	\widetilde{d}_j \ \ \  \textrm{ if } \ \Omega_j\cap \partial M = \emptyset,
	\end{array}\right.
$$
we see that ({\it 1.})-({\it 2.}) and ({\it 4.})-({\it 6.}) hold. 
Moreover $d_j \in C^\infty(\Omega_j)$. Finally, to prove  ({\it 5.}) 
it is enough to take unit partition over each $\widetilde{d}_j$. This concludes the proof.
\end{proof}

\section{Observability and unique continuation} 
\label{sec-obs}
\setcounter{equation}{0}

The objective of this section is to establish a new observability inequality, in terms of potential energy, 
for a large class of linear wave equations within our framework of admissible $\varepsilon$-controllable regions. 
We consider wave equations with potentials of the form 
\begin{equation} \label{L1} 
\left\{
\begin{aligned}
& \partial_t^2 w - \Delta w = p_{0}w+p_1 \partial_t w \quad \mbox{in} \;\; M \times (0,T) , \\
& w = 0  \quad  \mbox{on} \;\; \partial M \times (0,T),  
\end{aligned}
\right.
\end{equation}
where    
\begin{equation} \label{p0p1-1}
p_0\in L^2(0,T;L^2(M)) \;\; \mbox{and} \;\; p_1 \in L^{\infty}(0,T;L^{\infty}(M)),      
\end{equation} 
and for any weak solution $w \in L^2(0,T;H^1_0(M))\cap H^1(0,T;L^2(M))$, 
there exists a constant $C_T>0$ such that 
\begin{equation} \label{p0p1-2}
\| p_0 w \|_{L^2(0,T;L^2(M))} \le C_T \| w \|_{L^2(0,T;H^1_0(M))}. 
\end{equation}
Recall the notation already given in the Introduction for the natural phase space for the problem, $\mathcal{H}=H^1_0(M)\times L^2(M)$, equipped with norm $\|(u,v)\|^2_{\mathcal{H}}=\|\nabla u\|^2_2+\|v\|^2_2$.
As noticed before, we shall revisit a result by Triggiani and Yao \cite{ty2002}. 

\subsection{A result by Triggiani and Yao revisited}

\begin{Theorem}{\rm \cite[Theorem 10.1.1]{ty2002} } \label{thm-TYOVER}
	Let $(M, {\bf g})$ be an $N$-dimensional connected compact Riemannian manifold  of class $C^{\infty}$ with smooth boundary $\partial M$. 
	\begin{enumerate}
		\item Assume there exists a finite collection of overlapping sub-domains $\{\Omega_{j} \}_{j \in J}$ such that  
		for each $\Omega_j$, there exists a function $d_j: M \to \mathbb {R}$ satisfying:
		\begin{itemize}
			\item $d_{j} \in C^{\infty}(M)$ and $\displaystyle \inf_{\Omega_j} d_j >0$,  
			\item $\nabla^2d_j(X,X)\ge |X|_{\bf g}$, $\forall \, X \in T_q M$, $q \in \Omega_j$,   
			\item $\displaystyle \inf_{{\Omega}_j} |\nabla d_{j}| >0$.
		\end{itemize}
		\item Define the boundary regions  
		\begin{equation} \label{YT-Gamma}
		\Gamma_0 =\bigcup_{j \in J} \left\{ x \in \partial M \, | \, \langle \nabla d_j(x),\nu(x) \rangle \le 0 \right\}  
		\quad \mbox{and} \quad \Gamma_1 = \partial M \setminus \Gamma_0.
		\end{equation}
	\end{enumerate} 
	Then, for any solution $w$ of \eqref{L1} with $p_0,p_1$ satisfying \eqref{p0p1-1}-\eqref{p0p1-2}  
	and $T>0$ sufficiently large, there exists a constant $k_T>0$ such that 
	\begin{equation} \label{YT-Obs}
	\int_0^T \int_{\Gamma_1} \left( \frac{\partial w}{\partial \nu} \right)^2 d\Gamma_1 \, dt \ge 
	k_T \left( \|(w(0),\partial_t w(0))\|^2_{\mathcal{H}} + \|(w(T),\partial_t w(T))\|^2_{\mathcal{H}} \right).
	\end{equation}
	In addition, 
	\begin{equation} \label{YT-ucp}
	\text{if} \ \ \left. \frac{\partial w}{\partial \nu} \, \right|_{\Gamma_1 \times (0,T]} = 0 
	\quad \mbox{then} \quad w=0 \quad \mbox{in} \quad M \times [0,\infty).
	\end{equation}
\end{Theorem}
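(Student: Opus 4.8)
The plan is to follow the Riemannian multiplier method with escape vector fields of Triggiani and Yao, combined with the overlapping-subdomain decomposition of Theorem \ref{thm-overlapp}, and to close the estimate with a Carleman-type large-parameter argument so that the lower-order potential terms are absorbed and the unique continuation \eqref{YT-ucp} comes out in one shot together with the observability \eqref{YT-Obs}. Throughout, the role of the hypotheses on each $d_j$ is the following: the Hessian bound ${\rm Hess}\, d_j(X,X)\ge |X|_{\bf g}^2$ furnishes \emph{interior coercivity}, the condition $\inf_{\Omega_j}|\nabla d_j|>0$ guarantees the vector field $\nabla d_j$ is non-degenerate (no critical points) on $\Omega_j$, and $\inf_{\Omega_j} d_j>0$ fixes the sign of the weight; the boundary split \eqref{YT-Gamma} is dictated by the sign of $\langle \nabla d_j,\nu\rangle$.

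First I would derive, on each $\Omega_j$, a local energy identity using the two multipliers $H_j(w)=\langle \nabla d_j,\nabla w\rangle_{\bf g}$ and $\tfrac12(\Delta d_j)\,w$. Integrating the equation in \eqref{L1} against these over $\Omega_j\times(0,T)$ and applying Green's identities on $(M,{\bf g})$ produces an identity whose principal interior term is $\int_0^T\!\int_{\Omega_j}{\rm Hess}\,d_j(\nabla w,\nabla w)\,dx\,dt$. By assumption this is bounded below by $\int_0^T\!\int_{\Omega_j}|\nabla w|_{\bf g}^2$, and the second multiplier balances the kinetic contribution $|\partial_t w|^2$, so that a positive multiple of the localized energy is recovered. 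Because $w=0$ on $\partial M$ gives $\nabla w=(\partial_\nu w)\,\nu$ there, the boundary integrand reduces to $\tfrac12\langle\nabla d_j,\nu\rangle(\partial_\nu w)^2$: on $\Gamma_0$ it has the favorable sign and is discarded, while on $\Gamma_1$ it yields exactly the observation term $\int_0^T\!\int_{\Gamma_1}(\partial_\nu w)^2$ appearing on the right of \eqref{YT-Obs}.

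Since no single $d_j$ is critical-point-free on all of $M$, I would next patch the local identities with a partition of unity $\{\phi_j\}$ subordinate to the cover $\{\Omega_j\}$ of Theorem \ref{thm-overlapp}. The overlapping property ensures the cutoffs can be arranged so that all commutator and cutoff errors are first order in $w$ and supported in the overlaps, hence of lower order. Summing against $\phi_j$ gives a global inequality in which the interior term controls $\int_0^T E(t)\,dt$; the multiplier method also generates endpoint contributions at $t=0,T$ of the type $\int_M \partial_t w\,H_j(w)$, which, using the (quasi-)conservation of energy for \eqref{L1} and $\min_{\overline{\Omega}_j}d_j>0$, are converted into $\|(w(0),\partial_t w(0))\|_{\mathcal H}^2+\|(w(T),\partial_t w(T))\|_{\mathcal H}^2$. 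Because energy is essentially conserved, $\int_0^T E\,dt\approx T\,E$ dominates these $O(E)$ endpoint terms once $T$ is taken sufficiently large, which is the origin of the largeness-of-$T$ requirement.

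The main obstacle is the absorption of the lower-order terms, namely the potential contributions $\int p_0 w\,H_j(w)$ and $\int p_1\partial_t w\,H_j(w)$ together with the partition-of-unity errors. The $p_1\partial_t w$ and cutoff terms are absorbed by Young's inequality using \eqref{p0p1-1}; the delicate one is the $p_0 w$ term, which is exactly what hypothesis \eqref{p0p1-2} is designed to control, bounding $\|p_0 w\|_{L^2(0,T;L^2(M))}$ by $\|w\|_{L^2(0,T;H^1_0(M))}$. To remove the residual term $\|w\|_{L^2(0,T;L^2(M))}^2$ I would proceed in one of two ways: either run the computation against the Carleman weight $e^{\tau(d-c(t-T/2)^2)}$, whose pseudoconvexity (again from the Hessian bound, with $c$ chosen appropriately) lets the large parameter $\tau$ swallow all lower-order terms and simultaneously yields the unique continuation \eqref{YT-ucp}; or keep the energy-level identity and close it by a compactness--uniqueness argument, extracting a normalized sequence and using a unique continuation property to contradict the failure of the inequality. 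Finally, \eqref{YT-ucp} follows because, if $\partial_\nu w\equiv 0$ on $\Gamma_1\times(0,T]$, the right-hand side of \eqref{YT-Obs} vanishes and forces $\|(w(0),\partial_t w(0))\|_{\mathcal H}=0$, hence $w\equiv0$ on $M\times[0,T]$, and the extension to $M\times[0,\infty)$ follows from uniqueness for the Cauchy problem of \eqref{L1} by time-stepping.
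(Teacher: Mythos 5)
You should first be aware of how this statement sits in the paper: the authors do not prove it. Theorem \ref{thm-TYOVER} is quoted from \cite[Theorem 10.1.1]{ty2002}, and the paper's only added content is the remark that follows it: the generalization from two overlapping sub-domains to finitely many is a standard task, and the relaxation of $p_0\in L^\infty(M\times[0,T])$ to \eqref{p0p1-1}--\eqref{p0p1-2} is checked by inspecting the proof, as observed already in \cite[Remark 1.1.1]{LTZ} for the Euclidean case. So there is no internal proof to compare against; what can be judged is whether your sketch matches the cited source, and in its \emph{first} branch it does. Triggiani--Yao's method is precisely a pointwise Carleman estimate with weight $e^{\tau\phi}$, $\phi(x,t)=d_j(x)-c\,(t-T/2)^2$, whose pseudoconvexity comes from the Hessian bound on $d_j$, with $T$ taken large so that $\phi<0$ at $t=0,T$; the distinguishing feature --- advertised in the very title of \cite{ty2002} --- is that the estimate carries \emph{no lower-order terms}, so potentials $p_0,p_1$ at the regularity \eqref{p0p1-1}--\eqref{p0p1-2} are absorbed for large $\tau$, and observability and global uniqueness come out in one shot, exactly as you describe.

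Two caveats. First, your alternative closing route --- compactness--uniqueness --- would be a genuine gap if you relied on it: it presupposes an independent unique continuation property for \eqref{L1} with $p_0$ merely in $L^2(0,T;L^2(M))$ acting on energy-level solutions, and no off-the-shelf UCP is available at that regularity; in this paper the unique continuation statement (Theorem \ref{thm-ucp}) is \emph{deduced} from the present theorem, so invoking one here would be circular. Avoiding that detour is the entire point of the no-lower-order-terms Carleman estimate (which also yields an explicit constant $k_T$, something compactness arguments cannot give). Commit to the Carleman branch and delete the other. Second, the patching is where the real technical work lies: the identities are run on each $\Omega_j$ with its own $d_j$, and the terms generated by the cutoffs and on the internal interfaces must be controlled using the overlaps; your one-sentence assertion that these errors are ``first order, hence lower order'' is exactly the step on which \cite[Section 10]{ty2002} spends most of its effort, and it is also the step one must re-examine to justify the paper's claimed extensions (finitely many sub-domains, $p_0$ as in \eqref{p0p1-2}). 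As a sketch, though, your architecture --- local escape-function multipliers, reduction of the boundary term under the Dirichlet condition to $\tfrac12\langle\nabla d_j,\nu\rangle(\partial_\nu w)^2$ with the sign split \eqref{YT-Gamma}, absorption of $p_0w$ via \eqref{p0p1-2}, largeness of $T$, and derivation of \eqref{YT-ucp} from \eqref{YT-Obs} together with forward uniqueness for the linear problem --- is faithful to the cited proof.
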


\medskip

\begin{Remark} \rm In \cite[Theorem 10.1.1]{ty2002} the above result is presented with two overlapping sub-domains and assumes that $p_0 \in L^\infty(M\times [0,T])$. The generalization to a finite number of overlapping sub-domains is a standard task. On the other hand, after a careful revision of its proof, one may check that it can be extended for $p_0$ satisfying \eqref{p0p1-1}. Indeed, as noticed in \cite[Remark 1.1]{ty2002}, this was earlier observed in \cite[Remark 1.1.1]{LTZ} for the Euclidean framework. \qed
\end{Remark}

\begin{Remark} \rm The observability inequality \eqref{YT-Obs} is stated above with respect to 
the boundary $\Gamma_1$. We shall present a new observability inequality with respect to 
an admissible $\varepsilon$-controllable region $\omega$. To this end, we need a technical result, in a context of Riemannian manifolds, which allows carrying area integrals over volume integrals. This is presented in the next section.  \qed
\end{Remark}

\subsection{A coarea formula} 

We begin with a known coarea formula for $N$-dimensional $C^{\infty}$ manifolds, here denoted by $(W,{\bf g})$.    
Accordingly, given a $C^{\infty}$ function $\phi: W \to \mathbb{R}$ and $f \in L^1(W)$, one has
\begin{equation} \label{coarea1}
\int_{M}|\nabla \phi|f \, dV_{\bf g} = \int_{\mathbb{R}}\int_{\Gamma(t)}f \, dV_{{\bf g}|_{\Gamma(t)}}dt,
\end{equation}
where $\Gamma(t):=\phi^{-1}({t})=\{p \in W \ | \ \phi(p)=t \}$ and $dV_{{\bf g}|_{\Gamma(t)}}$ is the induced measure on $\Gamma(t)$.
A proof of this result can be found in, e.g., Chavel \cite[Corollary I.3.1]{cha}. 
To our purpose, we prove a coarea relation involving $\Gamma_1$ appearing in \eqref{YT-Gamma}-\eqref{YT-Obs} 
inside a context of overlapping $\varepsilon$-controllable sets. 

\begin{Lemma} [Coarea relation] \label{lemma-AV}
Given $\varepsilon >0$, in the context of Theorem \ref{thm-overlapp}, let us define 
$$ 
\widehat{\Gamma_1} = \bigcup^k_{j =1} \{ x \in \partial M \;|\; \langle \nabla d_{j}, \nu \rangle > 0 \}. 
$$
Then there exists a constant $C = C(\varepsilon)>0$ such that 
\begin{equation} \label{AV-1}
\int_{\widehat{\Gamma_1}} f dV_{{\bf g}|_{\partial M}} \le C \int_{\omega} f dV_{\bf g},
\end{equation}
for any admissible $\varepsilon$-controllable set $\omega$ and 
\begin{equation} \label{ex}
f \in L^1(M) \;\; \mbox{with} \;\; f \ge 0 \;\; \mbox{a.e.}
\end{equation}
\end{Lemma}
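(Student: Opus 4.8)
The plan is to realize the boundary piece $\widehat{\Gamma_1}$ as the outer face of a thin inward collar that lies entirely inside $\omega$, and then to use the coarea formula \eqref{coarea1} to trade the $(N-1)$-dimensional integral over $\widehat{\Gamma_1}$ for an $N$-dimensional integral over that collar, which is in turn dominated by $\int_\omega f\,dV_{\bf g}$. Thus the proof splits into a purely geometric step (the collar sits in $\omega$) and an analytic step (coarea plus a comparison of the induced measures), with the constant $C=C(\varepsilon)$ produced from the collar width and the Jacobian of the collar diffeomorphism.

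First I would establish the inclusion $\widehat{\Gamma_1}\subset\omega\cap\partial M$. For each $j$, property $(6)$ of Theorem \ref{thm-overlapp} gives $\langle\nabla d_j,\nu\rangle<0$ on $\partial M\cap\overline{V_j}$, so $\{x\in\partial M\mid\langle\nabla d_j,\nu\rangle>0\}$ is disjoint from $\partial M\cap\overline{V_j}$. Combining this sign information with condition $(d4)$ for the global escape function, the defining inclusion $\omega\supset\overline{M\setminus V}$ from Theorem \ref{thm-geometry}, and the disjointness of the cells $\{V_j\}$, one checks that each such positive set is absorbed by $\omega$; taking the union over $j$ yields $\widehat{\Gamma_1}\subset\omega\cap\partial M$. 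Since $\widehat{\Gamma_1}$ is compact and $\omega$ is open, there is a width $\delta>0$ for which the inward collar $\{x\in M\mid\mathrm{dist}(x,\widehat{\Gamma_1})<\delta\}$ is contained in $\omega$, and $\delta$ can be taken to depend only on $M,{\bf g}$ and the margin $\varepsilon_0$ fixed in the construction, hence only on $\varepsilon$.

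Next I would set up the foliation. Let $\phi$ be smooth with $\phi=0$ on $\partial M$, $\phi>0$ in ${\rm int}(M)$, and $0<c_1\le|\nabla\phi|\le c_2$ near $\partial M$ (a smoothed boundary-distance function), write $\Gamma(t)=\phi^{-1}(t)$, and let $\widehat N\subset\omega$ be the part of the collar of width $\delta$ lying over $\widehat{\Gamma_1}$, with slices $S_t=\Gamma(t)\cap\widehat N$ and $S_0=\widehat{\Gamma_1}$. Applying \eqref{coarea1} to $f\,\chi_{\widehat N}$ gives
\[
\int_{\widehat N}|\nabla\phi|\,f\,dV_{\bf g}=\int_0^{\delta}\Big(\int_{S_t} f\,dV_{{\bf g}|_{\Gamma(t)}}\Big)\,dt .
\]
Using the collar diffeomorphism $\Phi:\widehat{\Gamma_1}\times[0,\delta)\to\widehat N$, under which $dV_{\bf g}=J(x,t)\,dV_{{\bf g}|_{\partial M}}(x)\,dt$ with $J\ge c_0>0$, and the bound $|\nabla\phi|\le c_2$, this rewrites as
\[
\int_\omega f\,dV_{\bf g}\ \ge\ \int_{\widehat N} f\,dV_{\bf g}\ \ge\ \frac{c_0}{c_2}\int_{\widehat{\Gamma_1}}\Big(\int_0^{\delta} f(\Phi(x,t))\,dt\Big)\,dV_{{\bf g}|_{\partial M}}(x),
\]
so that the claim follows once the inner $t$-integral is bounded below by a fixed multiple of the boundary value $f(\Phi(x,0))$.

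The main obstacle is precisely this last comparison. The boundary $\partial M$ is a single, $t$-measure-zero level set, so the coarea formula never produces $\int_{\widehat{\Gamma_1}} f\,dV_{{\bf g}|_{\partial M}}$ on the nose; one must pass from the slice integrals to the trace on $\widehat{\Gamma_1}$ through the collar, and for a genuinely uniform constant this requires controlling $f$ near $\partial M$ rather than only $f\in L^1(M)$ with $f\ge 0$. I therefore expect the statement to be applied to the trace-admitting quantity appearing in the observability chain, namely $f=|\nabla w|^2$ for wave solutions $w$ (continuous up to $\partial M$ in the three-dimensional $H^2$ setting), where $\int_0^\delta f(\Phi(x,t))\,dt\ge c\,f(x,0)$ can be secured on the collar and $C=c_2/(c_0 c\,\delta)$ depends only on $\varepsilon$. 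The geometric inclusion of Step~2 is conceptually the heart of the matter, but the slice-to-trace passage is the delicate analytic point to be handled with care.
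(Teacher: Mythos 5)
Your proposal reproduces the paper's own strategy almost step for step. The paper's Step~1 establishes $\widehat{\Gamma_1}\subset\omega\cap\partial M$ and splits it into finitely many connected components $\Gamma^j_1=\widehat{\Gamma_1}\cap\Gamma^j_\omega$; its Step~2 builds, in boundary coordinates, thin prisms $\phi_j^{-1}(\mathrm{P}_h(\Gamma^j_1))$ contained in $\omega$, which are exactly your collars (with the coordinate height $x_N$ playing the role of your smoothed distance function); and its Step~3 applies the coarea formula \eqref{coarea1} with $\phi=x_N$, using the two-sided bound $0<|\nabla x_N|<C_{\bf g}$ to dominate the slice integrals by $C_{\bf g}\int_\omega f\,dV_{\bf g}$. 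So on the geometric side you are fully aligned with the paper, and your constant $C(\varepsilon)$ has the same provenance (collar width and metric bounds tied to the construction of $[\omega_\varepsilon]$).

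The noteworthy point is what you call the main obstacle. At precisely that juncture the paper writes the chain $\int_{\Gamma^j_1}f\,dV_{{\bf g}|_{\partial M}}\le\int_{\Gamma(0)}f\le\int_{\mathbb{R}}\int_{\Gamma(t)}f\,dt\le C_{\bf g}\int_{\omega}f\,dV_{\bf g}$, i.e., it asserts, with no justification, exactly the slice-to-trace inequality $\int_{\Gamma(0)}f\le\int_0^h\int_{\Gamma(t)}f\,dt$ that you refused to take for granted. Your skepticism is warranted: the value of $t\mapsto\int_{\Gamma(t)}f$ at $t=0$ is not controlled by its integral over $t\in[0,h)$. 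For $f$ merely in $L^1(M)$ the left-hand side of \eqref{AV-1} is an integral of a trace over a null set and is not even well defined, and even for continuous $f$ no uniform constant can work: take $f\equiv 1$ on $\partial M$ decaying to $0$ within distance $\epsilon$ of the boundary, so that the left side of \eqref{AV-1} stays equal to ${\rm meas}_{\partial M}(\widehat{\Gamma_1})$ while the right side tends to $0$ as $\epsilon\to 0$. Hence the lemma is legitimately usable only for integrands with quantitative control up to the boundary --- in the paper's application, $f=|\nabla w|^2$ for solutions of \eqref{L1}, where the boundary quantity is really the hidden-regularity trace of $\partial w/\partial\nu$ --- which is exactly the repair you sketch via the lower bound on the inner $t$-integral. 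In short, you have not missed an idea that the paper supplies; you reconstructed the paper's argument and correctly isolated the one step its proof does not justify, and any complete fix must add a hypothesis (or exploit structure of the specific $f$) along the lines you indicate.
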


\begin{proof} The proof will be given in three steps. 

\medskip 
	
{\it Step 1.} Let $\omega$ be an admissible $\varepsilon$-controllable set, that is, $\omega \in [\omega_{\varepsilon}]$. 
Then, in the context of Theorem \ref{thm-geometry} and \eqref{Vd}, there exists $ \varepsilon_0 \in (0, \varepsilon)$ and $V \subset M$ 
such that 
$$
\overline{M \setminus V} \in \chi_{\varepsilon_0}(M), \;\; \omega \cap V \in \chi_{\varepsilon-\varepsilon_0}(M).
$$ 
	Moreover, by constructing $ \omega \in M $ and by the compactness of $ M $, we have a finite number of connected components of $ \omega $ that intersect $ \partial M $, that is, there exists a number $ l \in \mathbb {N} $ such that
	$$
	\omega \cap \partial M = \bigcup^l_{j=1} \Gamma^j_{\omega}, 
	$$ 
	where $ \Gamma^j_{\omega}$ is the $j$-th connected component of $\omega \cap \partial M$.
	Note that $\widehat{\Gamma_1} \subset \omega$ (see Theorem \ref{thm-overlapp}), that is
$$
{\rm meas}_{\partial M}\widehat{\Gamma_1} < \varepsilon_0.
$$
Given the $\widehat{\Gamma_1} \subset \omega \cap \partial M$, then we will denote by $\Gamma^j_1$ to the connected components 
of $\widehat{\Gamma_1}$, given by
$$
\Gamma^j_1=\widehat{\Gamma_1} \cap \Gamma^j_{\omega},
$$
such that ${\rm meas}_{\partial M}\Gamma^j_1 < \varepsilon_0$ for each $j=1,2,...,l$.
	
\medskip
	
{\it Step 2.} Given a constant $h> 0$ and a set $A \subset \mathbb{R}^{N-1}$ we define 
$$
\mathrm{P}_{h}(A)=\{ (x_1,\dots,x_N) \in \mathbb{R}^N \ | \ (x_1,\dots,x_{N-1}) \in {\rm int}(A), \;\; 0 \le x_N<h \},
$$ 
called {\it open prism of base $A$ and height $h$}, which will play a fundamental role below. 

Let us fix $j$. Consider $\varepsilon^j_1 \in (0,\varepsilon)$ small enough, such that there is a $p \in \Gamma^j_1$ and a chart related to that point $(U_j,\phi_j=(x_1,\dots,x_N))$ with $\phi_j(p)=(0,\dots,0)$ such that the connected component $\Gamma^j_{\omega}$ is totally within that chart. Thus, there is a constant $h>0$ small enough, such that $\mathrm{P}_h(\Gamma^j_1)\subset \Gamma^j_{\omega} \subset \subset \phi_j(U_j)$ of $M$ and $\Gamma^j_{1}=\partial \phi^{-1}_j(P_h(\Gamma^j_1))$ (see Figure \ref{coarea}).

Observe that $(\phi_j^{-1}(\mathrm{P}_h(\Gamma^j_1)),{\bf g}|_{\phi_j^{-1}(\mathrm{P}_h(\Gamma^j_1))})$ is a smooth Riemannian submanifold of $M$ with boundary and dimension $N$, with the induced metric of $M$. In particular, we have
	
\begin{itemize}
\item The Lebesgue $\sigma$-algebra associated with $(\phi_j^{-1}(\mathrm{P}_h(\Gamma^j_1)),{\bf g}|_{\phi_j^{-1}(\mathrm{P}_h(\Gamma^j_1))})$ is the Lebesgue $\sigma$-algebra associated to $(M,{\bf g})$ intersected with $ \phi_j^{-1}(\mathrm{P}_h(\Gamma^j_1))$. 

\item If $B \subset \phi_j^{-1}(\mathrm{P}_h(\Gamma^j_1)) \subset \omega$ is a measurable set in $(M,{\bf g})$, then it is measurable in $(\omega,{\bf g}|_{\omega})$ and $(\phi_j^{-1}(\mathrm{P}_h(\Gamma^j_1)),{\bf g}|_{\phi_j^{-1}(\mathrm{P}_h(\Gamma^j_1))})$. 
Moreover,
$$
{\rm meas}_{M}(B)={\rm meas}_{\omega}(B)={\rm meas}_{\phi_j^{-1}(\mathrm{P}_h(\Gamma^j_1))}(B).
$$
since the three manifolds have the same dimension. Let us also observe that $(\omega,{\bf g}|_{\omega})$ and 
$(\phi_j^{-1}(\mathrm{P}_h(\Gamma^j_1)),{\bf g}|_{\phi_j^{-1}(\mathrm{P}_h(\Gamma^j_1))})$ have the metric of $M$ restricted to each of the manifolds.
\end{itemize}
	\begin{figure}[htb] 
	\begin{center}
		\includegraphics[scale=0.4]{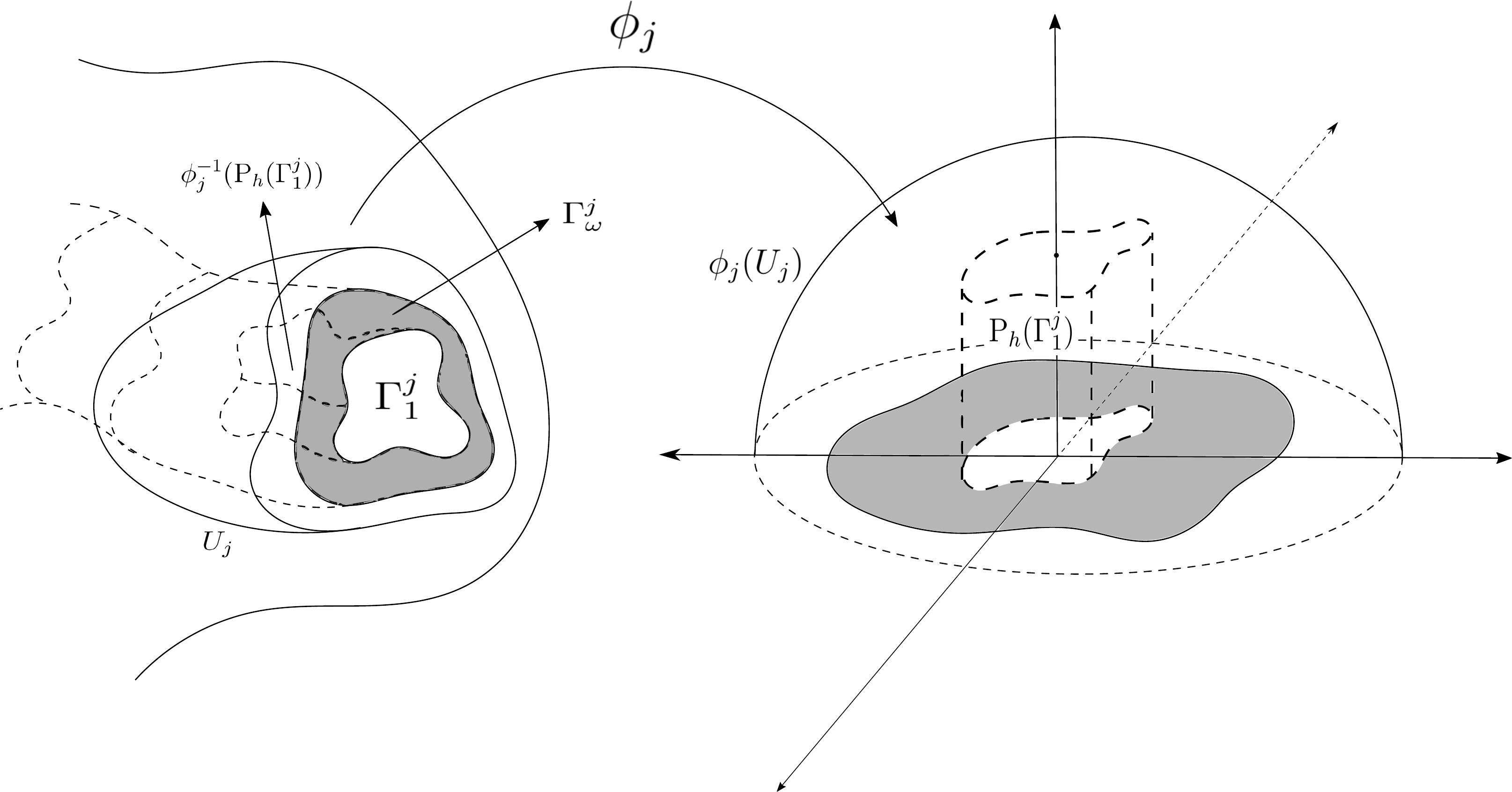} 
	\end{center}
	\caption{Note that $h>0$ depends directly on $\varepsilon_0>0$, thus, the open set $\phi_j^{-1}(\mathrm{P}_h(\Gamma^j_1))\subset U_j$  depends on the class $[\omega_{\varepsilon}]$.} \label{coarea}
\end{figure}

{\it Step 3.} Let us consider the $N$-dimensional manifold with boundary $(\phi_j^{-1}(\mathrm{P}_h(\Gamma^j_1)),{\bf g}|_{\phi_j^{-1}(\mathrm{P}_h(\Gamma^j_1))})$  and the map in $C^\infty(\phi_j^{-1}(\mathrm{P}_h(\Gamma^j_1)))$ given by $x_N:\phi_j^{-1}(\mathrm{P}_h(\Gamma^j_1)) \to \mathbb{R}$, 
where 
\begin{equation} \label{x3}
\Gamma^j_1 =x_N^{-1}(0), \ \ {\rm Img}(x_N)=[0,h), \ \ 0<|\nabla x_N|<C_{\bf g},
\end{equation}	
for a constant $C_{\bf g}>0$ that depends on the metric ${\bf g}$.
	
Thus, setting $(W,{\bf g})=(\phi_j^{-1}(\mathrm{P}_h(\Gamma^j_1)),{\bf g}|_{\phi_j^{-1}(\mathrm{P}_h(\Gamma^j_1))}), \Gamma^j_1=\Gamma(0)$ and  $\phi=x_N$, 
by the coarea formula \eqref{coarea1}, 
	$$
	\int_{\phi_j^{-1}(\mathrm{P}_h(\Gamma^j_1))}^{ }|\nabla x_N|fdV_{{\bf g}|_{\phi_j^{-1}(\mathrm{P}_h(\Gamma^j_1))}}=\int_{\mathbb{R}}^{ }\int_{\Gamma(t)}^{ }fdV_{{\bf g}|_{\partial \phi_j^{-1}(\mathrm{P}_h(\Gamma^j_1))}}dt,
	$$ 
	for all $f:M \to \mathbb{R}$ in $L^1(\phi_j^{-1}(\mathrm{P}_h(\Gamma^j_1)))$.
	
Then, taking $f$ satisfying (\ref{ex}) and by (\ref{x3}), 
we obtain
	\begin{align*}
	\int_{\Gamma^j_1}^{ }fdV_{{\bf g}|_{\partial M}}&\le  \int_{\Gamma(0)}^{ }fdV_{{\bf g}|_{\partial\phi_j^{-1}(\mathrm{P}_h(\Gamma^j_1))}} \\
	&\le  \int_{\mathbb{R}}^{ }\int_{\Gamma(t)}^{ }{f}dV_{{\bf g}|_{\partial\phi_j^{-1}(\mathrm{P}_h(\Gamma^j_1))}}dt \\
	&\le  C_{\bf g}  \int_{\phi_j^{-1}(\mathrm{P}_h(\Gamma^j_1))}^{ }fdV_{{\bf g}|_{\phi_j^{-1}(\mathrm{P}_h(\Gamma^j_1))}} \\
	&\le C_{\bf g} \int_{\omega}f dV_{{\bf g}}.
	\end{align*}
Finally, repeating the process for each $j=1,2,...,l$ and since $\bigcup^l_{j=1} \Gamma^j_1=\widehat{\Gamma_1}$, the proof is complete.
\end{proof}

\subsection{New observability and unique continuation}

Now we are ready to establish our observability inequality that is stated with potential energy instead of the usual kinetic energy. Moreover it is specially designed for using measure-controlled damping regions. 

\begin{Theorem} \label{thm-ucp} Let $(M,{\bf g})$ be an $N$-dimensional connected compact Riemannian manifold of class $C^{\infty}$ with smooth boundary and let $w \in L^2(0,T;H^1_0(M))\cap H^1(0,T;L^2(M))$ 
	be a solution of the linear problem \eqref{L1} with $p_0,p_1$ satisfying \eqref{p0p1-1}-\eqref{p0p1-2}. 
	Then, for any admissible $\varepsilon$-controllable region $\omega \subset M$ we have: 
	\begin{enumerate}
		\item Observability: for $T>0$ sufficiently large, there exists a constant $k_T>0$ 
		such that 
		\begin{equation} \label{obsWAVE} 
		\int_0^T \int_{\omega} |\nabla w|^2 dxdt \ge 
		k_T \left( \|(w(0),\partial_t w(0))\|^2_{\mathcal{H}} + \|(w(T),\partial_t w(T)) \|^2_{\mathcal{H}} \right).  
		\end{equation}
		\item Unique continuation: for the above $T>0$, if $w=0$ in $\omega \times (0,T)$ then $w=0$  in $M \times [0,\infty)$. 
	\end{enumerate} 
\end{Theorem}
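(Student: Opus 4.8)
The plan is to transfer the boundary observability of Theorem \ref{thm-TYOVER} into the interior control region $\omega$ by means of the coarea relation of Lemma \ref{lemma-AV}. First I would invoke Theorem \ref{thm-overlapp} to produce the finite family of overlapping sub-domains $\{\Omega_j\}_{j=1}^{k}$ together with the functions $d_j\in C^\infty(M)$ satisfying items (3)--(5) there; these are precisely the hypotheses required in part (1) of Theorem \ref{thm-TYOVER} (smoothness, positivity $\inf_{\Omega_j}d_j>0$, the Hessian bound $\nabla^2 d_j(X,X)\ge|X|_{\bf g}$, and $\inf_{\Omega_j}|\nabla d_j|>0$). Applying that theorem to the solution $w$ of \eqref{L1} then yields, for $T$ large, both the boundary observability \eqref{YT-Obs} over $\Gamma_1=\partial M\setminus\Gamma_0$ and the unique continuation statement \eqref{YT-ucp}.

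The key geometric observation is that the Triggiani--Yao set $\Gamma_1=\bigcap_{j}\{\langle\nabla d_j,\nu\rangle>0\}$ is contained in the coarea set $\widehat{\Gamma_1}=\bigcup_{j}\{\langle\nabla d_j,\nu\rangle>0\}$, and that, since the two constructions use the same family $\{d_j\}$, one has $\Gamma_1\subset\widehat{\Gamma_1}\subset\omega\cap\partial M$ by Theorem \ref{thm-overlapp}. Next I would exploit the Dirichlet condition $w=0$ on $\partial M$: the tangential part of $\nabla w$ vanishes along $\partial M$, so $\nabla w$ is purely normal there and the pointwise identity $|\nabla w|^2=(\partial w/\partial\nu)^2$ holds on $\partial M$. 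Hence, applying Lemma \ref{lemma-AV} for a.e. fixed $t\in(0,T)$ with the admissible nonnegative choice $f=|\nabla w(\cdot,t)|^2\in L^1(M)$ gives
$$
\int_{\Gamma_1}\Big(\frac{\partial w}{\partial\nu}\Big)^2 dV_{{\bf g}|_{\partial M}}\le\int_{\widehat{\Gamma_1}}|\nabla w|^2\,dV_{{\bf g}|_{\partial M}}\le C\int_{\omega}|\nabla w|^2\,dV_{\bf g},
$$
with $C=C(\varepsilon)$ independent of $t$. Since $f\ge 0$, Tonelli's theorem lets me integrate in $t$ over $(0,T)$, and chaining the resulting estimate with \eqref{YT-Obs} produces \eqref{obsWAVE} with constant $k_T/C$, which proves the observability part.

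For the unique continuation statement I would argue similarly. If $w=0$ on $\omega\times(0,T)$, then $\nabla w=0$ there, so $\int_0^T\int_\omega|\nabla w|^2\,dx\,dt=0$; the chain of inequalities above then forces $\int_0^T\int_{\Gamma_1}(\partial w/\partial\nu)^2\,d\Gamma_1\,dt=0$, whence $\partial w/\partial\nu=0$ on $\Gamma_1\times(0,T]$. Invoking \eqref{YT-ucp} yields $w=0$ on $M\times[0,\infty)$, as claimed.

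I expect the main obstacle to be the careful bookkeeping connecting the three boundary sets: the intersection-type set $\Gamma_1$ from Theorem \ref{thm-TYOVER}, the union-type set $\widehat{\Gamma_1}$ from Lemma \ref{lemma-AV}, and the control region $\omega$. Establishing $\Gamma_1\subset\widehat{\Gamma_1}\subset\omega\cap\partial M$ and justifying the normal-derivative reduction $|\nabla w|^2=(\partial w/\partial\nu)^2$ on $\partial M$ (which presupposes the hidden/trace regularity already built into the Triggiani--Yao framework, so that $\partial w/\partial\nu\in L^2((0,T)\times\partial M)$) are the steps where the geometry and the PDE regularity must be reconciled. Once these are in place, the coarea estimate and the chaining with \eqref{YT-Obs} are routine.
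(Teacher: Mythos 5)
Your proposal follows essentially the same route as the paper's proof: Theorem \ref{thm-overlapp} supplies the overlapping sub-domains and escape functions $d_j$, Theorem \ref{thm-TYOVER} gives the boundary observability \eqref{YT-Obs} and the unique continuation \eqref{YT-ucp}, and Lemma \ref{lemma-AV} with $f=|\nabla w|^2$ (using $|\langle\nabla w,\nu\rangle|\le|\nabla w|$, which under the Dirichlet condition is in fact an equality, as you note) transfers the boundary integral to $\omega$, with unique continuation obtained by the same chain of inequalities. Your bookkeeping of the intersection-type set $\Gamma_1=\partial M\setminus\Gamma_0$ versus the union-type set $\widehat{\Gamma_1}$, via $\Gamma_1\subset\widehat{\Gamma_1}\subset\omega\cap\partial M$, is a welcome clarification of a point the paper's proof glosses over by writing $\Gamma_1$ as a union.
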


\bigskip

\begin{proof}
Fix $\varepsilon>0$, by Theorem \ref{thm-overlapp} and Theorem \ref{thm-TYOVER} with $J =\{1,...,k\}$, there exists $k_T>0$ depending on $\varepsilon,T$ and $C_T$ such that 
\begin{equation*}
	\int_0^T \int_{\Gamma_1} \left( \frac{\partial w}{\partial n} \right)^2 d\Gamma_1 \, dt \ge 
	k_T \big( \|(w(0),\partial_t w(0))\|^2_{\mathcal{H}} + \|(w(T),\partial_t w(T))\|^2_{\mathcal{H}} \big), 
\end{equation*}
where
$$
\Gamma_1 =\bigcup^k_{j =1} \left\{ x\in \partial M \ |  \, \langle \nabla d_j(x), \nu(x) \rangle > 0  \right\} \subset \omega.
$$
Then, since $|\langle \nabla w, \nu \rangle| \le |\nabla w|$, we can apply coarea relation \eqref{AV-1} with $f=|\nabla w|^2$ and $\overline{\Gamma_1} = \Gamma_1$. This shows \eqref{obsWAVE}. Finally, if $w=0$ in $\omega \times (0,T]$, then \eqref{YT-ucp} implies promptly $w=0$ on $M \times [0,\infty)$.
\end{proof}

\section{Dynamics of semilinear wave equations} 
\label{sec-dynamics}
\setcounter{equation}{0}

This section is devoted to establish the existence of global attractors for dynamics of wave equations featuring locally distributed damping 
on admissible $\varepsilon$-controllable regions and nonlinear forcing terms with critical Sobolev growth. 

\medskip

\subsection{Assumptions and results}  \label{sec-critical}

Let $( M, {\bf g})$ be a $3$-dimensional connected compact Riemannian manifold of class $C^{\infty}$, with smooth boundary $\partial M$. We are concerned with the semilinear wave equation 
\begin{equation} \label{NW}
\left\{
\begin{aligned}
& \partial_{t}^2u-\Delta u + a(x)g(\partial_t u) + f(u) = 0 \quad \mbox{in} \quad M \times (0,\infty), \\
& u = 0 \quad \mbox{on} \quad \partial M \times (0,\infty), \\
& u(x,0) = u_{0}(x), \quad \partial_{t}u(x,0) = u_{1}(x), \quad x \in M. 
\end{aligned}
\right.
\end{equation}
We assume that    
\begin{equation}  \label{f1}
f \in C^1(\mathbb{R}), \quad f(0)=0, \quad |f'(z)|\le C_{f}(1+|z|^2), \quad \forall z \in \mathbb{R}, 
\end{equation}
for some constant $C_{f}>0$, 
\begin{equation} \label{f2}
\liminf_{|z| \to \infty} \frac{f(z)}{z} > -\lambda_1,
\end{equation}
where $\lambda_1 > 0$ is the first eigenvalue of $-\Delta$ in $M$ with homogeneous Dirichlet boundary condition, and 
\begin{equation} \label{g}
g \in C^1(\mathbb{R}), \quad g(0) = 0, \quad m_1 \le g'(z) \le m_2, \quad \forall z \in \mathbb{R}, 
\end{equation}
for some constants $m_1,m_2 > 0$. For the damping coefficient, there exists some $a_0 >0$, 
\begin{equation} \label{damping}
a \in L^\infty(M), \quad a \ge a_0 \;\; \mbox{a.e. in} \;\; \omega, 
\end{equation}
where $\omega$ is a suitable open set of $M$.

As we will see in Theorem \ref{thm-wp}, under above assumptions, problem (\ref{NW}) is well-posed in $\mathcal{H}$.
Then its solution operator defines a nonlinear $C^0$ semigroup $\{S(t)\}_{t \ge 0}$ on $\mathcal{H}$. 
Often the corresponding continuous dynamical system generated by the problem \eqref{NW} is denoted by $(\mathcal{H},S(t))$.  

\begin{Remark} \rm We recall that a global attractor for a dynamical system $(\mathcal{H},S(t))$  
is a compact set $\mathcal{A} \subset \mathcal{H}$ that is fully invariant and attracts bounded sets of $\mathcal{H}$. 
Also, given a compact set $K \subset \mathcal{H}$ its fractal dimension is defined by 
$$
\dim_F K = \limsup_{\epsilon \to 0} \frac{\ln(n_{\epsilon})}{\ln(1/\epsilon)},  
$$
where $n_{\epsilon}$ is the minimal number of closed balls of radius $\epsilon$ necessary to cover $K$.
See e.g. \cite{babin-vishik,hale,lady,temam} or \cite[Chapter 7]{CL-yellow}.
\end{Remark}

\begin{Theorem} [Attractors] \label{thm-attractor} 
Under assumptions \eqref{f1}-\eqref{g}, given $\varepsilon >0$, 
assume that \eqref{damping} is satisfied for some admissible $\varepsilon$-controllable set $\omega \subset M$. 
Then the dynamics of problem $(\ref{NW})$ has a global attractor $\mathcal{A}$ with finite fractal dimension and regularity $H^2(M) \times H^1(M)$.
\end{Theorem}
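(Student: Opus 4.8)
The plan is to realize the semiflow $(\mathcal{H},S(t))$ as a \emph{gradient} dynamical system that is \emph{quasi-stable} in the sense of Chueshov and Lasiecka \cite{chueshov-book,CL-yellow}, so that existence, finite-dimensionality and smoothness of the attractor all follow from their abstract machinery. Global well-posedness and the generation of the $C^0$-semigroup $S(t)$ on $\mathcal{H}=H^1_0(M)\times L^2(M)$ are provided by Theorem \ref{thm-wp}. Along trajectories the total energy
$$
\mathcal{E}(u,\partial_t u)=\frac12\int_M\big(|\partial_t u|^2+|\nabla u|^2\big)\,dx+\int_M F(u)\,dx,\qquad F(s)=\int_0^s f(\sigma)\,d\sigma,
$$
is nonincreasing, since $\frac{d}{dt}\mathcal{E}=-\int_M a(x)\,g(\partial_t u)\,\partial_t u\,dx\le0$ because $g(s)s\ge m_1 s^2\ge0$ by \eqref{g}. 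Condition \eqref{f2} together with the Poincar\'e inequality makes $\mathcal{E}$ coercive and bounded below on $\mathcal{H}$, which yields a bounded absorbing set and, in particular, shows that the set $\mathcal{N}$ of equilibria is bounded.

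Next I would verify that $\mathcal{E}$ is a \emph{strict} Lyapunov function. If $\mathcal{E}$ is constant on a complete bounded trajectory, then $\int_M a\,g(\partial_t u)\,\partial_t u\,dx\equiv0$; as $a\ge a_0$ on $\omega$ and $g(s)s\ge m_1 s^2$, this forces $\partial_t u\equiv0$ on $\omega$. Differentiating \eqref{NW} in time, $v=\partial_t u$ solves a linear problem of type \eqref{L1} with $p_1=-a\,g'(\partial_t u)\in L^\infty$ and $p_0=-f'(u)$, and $v=0$ on $\omega$; the unique continuation statement of Theorem \ref{thm-ucp} then gives $v\equiv0$, so the trajectory reduces to a single equilibrium. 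Hence $(\mathcal{H},S(t))$ is a gradient system with bounded $\mathcal{N}$.

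The core step is the quasi-stability estimate on the absorbing set $B$. For two solutions $u^1,u^2$ set $w=u^1-u^2$; writing $g(\partial_t u^1)-g(\partial_t u^2)=\tilde g\,\partial_t w$ and $f(u^1)-f(u^2)=\tilde f\,w$ with $\tilde g=\int_0^1 g'(\cdot)\,ds\in[m_1,m_2]$ and $\tilde f=\int_0^1 f'(\cdot)\,ds$, the difference $w$ solves \eqref{L1} with $p_1=-a\tilde g\in L^\infty$ and $p_0=-\tilde f$. Because $N=3$ and $H^1_0(M)\hookrightarrow L^6(M)$, the critical bound $|f'(z)|\le C_f(1+|z|^2)$ gives $\|\tilde f\,w\|_{L^2}\le C\big(1+\|u^1\|_{L^6}^2+\|u^2\|_{L^6}^2\big)\|w\|_{H^1}$, so on $B$ the potential $p_0$ satisfies \eqref{p0p1-1}--\eqref{p0p1-2}, which is exactly the hypothesis needed to invoke the observability of Theorem \ref{thm-ucp}. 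The energy identity for $w$ furnishes the dissipation $\mathcal{E}_w(0)-\mathcal{E}_w(T)\ge m_1 a_0\int_0^T\!\!\int_\omega|\partial_t w|^2$; a multiplier argument testing the equation with $\varphi w$ for a cutoff $\varphi$ supported in $\omega$ converts this kinetic control into potential control $\int_0^T\!\!\int_\omega|\nabla w|^2$ up to lower-order terms; finally the potential-energy observability \eqref{obsWAVE} upgrades this to control of the full energy. Iterating over intervals of length $T$ produces the quasi-stability inequality
$$
\|S(t)z_1-S(t)z_2\|_{\mathcal{H}}^2\le a(t)\,\|z_1-z_2\|_{\mathcal{H}}^2+c\sup_{0\le s\le t}\big[\,n_X(w(s))\,\big]^2,
$$
with $a(t)\to0$ and $n_X$ a compact seminorm (a lower-order Sobolev norm of $w$).

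With this in hand the conclusions follow from the abstract theory of \cite{chueshov-book,CL-yellow}. Quasi-stability implies asymptotic smoothness, so the gradient system with bounded equilibria set admits a compact global attractor $\mathcal{A}$ coinciding with the unstable manifold of $\mathcal{N}$, and the same quasi-stability estimate restricted to $\mathcal{A}$ yields finite fractal dimension. For the regularity $H^2(M)\times H^1(M)$ I would differentiate the equation in time and apply the quasi-stability estimate to $\partial_t u$, showing $\partial_t u$ bounded in $H^1$ along $\mathcal{A}$; elliptic regularity for $-\Delta u=-\partial_t^2 u-a\,g(\partial_t u)-f(u)$ then gives $u\in H^2(M)$. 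I expect the main obstacle to be the quasi-stability step: handling the critical nonlinearity $|f(u)|\approx|u|^3$ so that $p_0$ genuinely satisfies \eqref{p0p1-2}, and carrying out the kinetic-to-potential exchange that lets the new observability \eqref{obsWAVE} absorb the localized damping. The smoothness bootstrap is also delicate because $f,g$ are only $C^1$, where the globally Lipschitz damping (bounded $g'$) is essential.
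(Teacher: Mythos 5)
Your overall architecture coincides with the paper's: gradient structure via the unique continuation of Theorem \ref{thm-ucp}, quasi-stability via the observability inequality \eqref{obsWAVE}, then the abstract Chueshov--Lasiecka machinery for existence, finite dimension and smoothness. Your gradient and regularity steps are essentially the paper's (the paper justifies the time-differentiation in the Lyapunov argument by approximating with strong solutions from $D(\mathbb{A})$, and gets $H^2\times H^1$ regularity from the abstract time-regularity result \cite[Theorem 7.9.8]{CL-yellow} together with elliptic regularity, which is what your sketch amounts to). The genuine gap is in the quasi-stability step, at exactly the point you flag as ``the main obstacle'' but leave unresolved: making the critical coupling term lower order. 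Your bound $\|\tilde f\,w\|_{L^2}\le C\big(1+\|u^1\|^2_{L^6}+\|u^2\|^2_{L^6}\big)\|w\|_{H^1}$ suffices for hypothesis \eqref{p0p1-2} of Theorem \ref{thm-ucp}, but it is a \emph{top-order} bound: in the iteration, the term $\int_0^T\int_M p_0\,w\,\partial_t w\,dx\,dt$ then only admits an estimate of size $\int_0^T E(s)\,ds$, which cannot be split as $\rho\,E(0)$ plus a compact seminorm of $w$, so the decay produced by the damping and by \eqref{obsWAVE} is destroyed and the inequality \eqref{quasi-stability} with $W=L^3(M)$ is never reached. The $C^2$-device of \cite{clt} (integrating by parts in time and differentiating $f'(u)$) is unavailable here, since $f$ is only $C^1$ --- this is precisely why the theorem is not a routine adaptation.

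The paper's missing ingredient is Lemma \ref{lemma-p0w}, based on Strichartz estimates \cite{stri}: solutions with data in a bounded invariant set satisfy $u^1,u^2\in L^4(0,T;L^{12}(M))$ uniformly, whence by H\"older (with $\tfrac1{12}+\tfrac1{12}+\tfrac13=\tfrac12$) one gets $\|p_0w\|_{L^2(M)}\le C\big(1+\|u^1\|^2_{L^{12}}+\|u^2\|^2_{L^{12}}\big)\|w\|_{L^3(M)}$ and therefore $\|p_0w\|_{L^1(0,T;L^2(M))}\le C_{BT}\sup_{t\in[0,T]}\|w(t)\|_{L^3(M)}$ --- a genuinely lower-order bound by the compact norm. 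This yields $\int_0^T\int_M |p_0\,w\,\partial_t w|\,dx\,dt\le \rho E(0)+C_{BT\rho}\sup_t\|w(t)\|^2_{L^3(M)}$, and the small multiple $\rho E(0)$ is absorbed by the $-k_TE(0)$ coming from \eqref{obsWAVE}; this is the step your proposal needs and does not supply. A smaller technical point: your cutoff multiplier controls $\int_0^T\int\varphi|\nabla w|^2$ only on $\{\varphi=1\}\subsetneq\omega$, whereas \eqref{obsWAVE} requires the gradient over an admissible region; you must either arrange $\{\varphi=1\}$ to contain a smaller admissible set $\omega'\in[\omega_{\varepsilon}]$ (possible, since any open set containing $\overline{M\setminus V}$ of small measure is again admissible) or argue as the paper does, with the perturbed functionals $\phi(t)=\int_M w\,\partial_t w\,dx$, $\psi(t)=\int_{\omega}w\,\partial_t w\,dx$ and $\Phi=\mu E+\eta\phi+\psi$, which produce the term $-\|\nabla w\|^2_{L^2(\omega)}$ directly.
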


The existence of global attractors for wave equations with critical Sobolev exponent $p=3$ on bounded domains of $\mathbb{R}^3$ was firstly proved by Arrieta, Carvalho and Hale \cite{ach}, with a weak frictional damping defined over all the domain. Subsequently, 
Feireisl and Zuazua \cite{fz} proved the existence global attractors in the case of locally distributed damping, 
satisfying a geometric control condition. Their arguments used a unique continuation property by Ruiz \cite{ruiz}. 
In that direction, further properties like finite fractal dimension and regularity of attractors were achieved years later by Chueshov, Lasiecka and Toundykov \cite{clt}. In Theorem \ref{thm-attractor}, we consider the existence of a regular finite dimensional global attractor with  
a sharp measure-controlled damping region, that is, the damping region is any $\varepsilon$-controllable set. 
Our proof relies on the observability and unique continuation Theorem \ref{thm-ucp}. 
In addition, we only assume $f\in C^1$ instead $f \in C^2$ as in \cite{ach,clt,fz}. 

\begin{Remark} \rm 
The proof of Theorem \ref{thm-attractor} is divided into three parts. Firstly, we show that our system is gradient by using the unique continuation property in Theorem \ref{thm-ucp}. Then we apply a recent theory of quasi-stable systems (\cite{chueshov-book,CL-yellow}) and the observability inequality in Theorem \ref{thm-ucp} to prove asymptotic (compactness) smoothness of the system. Finally, by applying a classical existence result (e.g. \cite[Corollary 7.5.7]{CL-yellow}) we obtain a global attractor characterized by $\mathcal{A}=\mathbb{M}^{u}(\mathcal{N})$, the unstable manifold of the set $\mathcal{N}$ of stationary solutions of $(\ref{NW})$. \qed
\end{Remark}

\subsection{Well-posedness and energy estimates} \label{sec-wp} 

Let us write 
$$ 
U = \begin{bmatrix} u \\ \partial_t u \end{bmatrix}, \quad 
\mathbb{A} = \begin{bmatrix} 0 & -I \\ -\Delta & a(x)g(\cdot) \end{bmatrix}, \quad 
\mathbb{F} = \begin{bmatrix} 0 & 0 \\ f(\cdot) & 0 \end{bmatrix}.
$$
Then problem (\ref{NW}) is equivalent to the Cauchy problem 
$$ 
\partial_t U+\mathbb{A} U+\mathbb{F} U = 0, \quad U(0)=  \begin{bmatrix} u_0 \\ u_1 \end{bmatrix}, 
$$ 
defined in $\mathcal{H}$ with domain 
$$
D(\mathbb{A})= (H^2(M)\cap H^1_0(M)) \times H^1_0(M).
$$
From assumption \eqref{f1} it is well known that $\mathbb{F}$ is locally Lipschitz in $\mathcal{H}$ and then 
existence of weak and strong solutions follows from semigroup theory. 
The following existence result is essentially proved in \cite{clt,fz}. 

\begin{Theorem}[Well-possedness] \label{thm-wp} Assume that \eqref{f1}-\eqref{damping} hold. Then
	\begin{enumerate}
		\item For initial data $(u_0,u_1) \in \mathcal{H}$, problem \eqref{NW} possesses a unique weak solution 
		\begin{equation} \label{WPi}
		u \in C(\mathbb{R}^+;H^1_0(M))\cap C^1(\mathbb{R}^+;L^2(M)).
\end{equation} 
\item For initial data $(u_0,u_1) \in D(\mathbb{A})$, problem \eqref{NW} possesses a unique strong solution 
$$ 
u \in C(\mathbb{R}^+;H^2(M) \cap H^1_0(M)) \cap C^1(\mathbb{R}^+;H^1_0(M)).
$$ 
		\item Given $T>0$ and a bounded set $B$ of $\mathcal{H}$, there exists a constant $D_{BT}>0$ such that for any two initial values $z^i_0 \in B$, $i=1,2$, the corresponding solutions $z^i=(u^i,\partial_t u^i)$ satisfy
		\begin{equation} \label{uni1}
		\|z^1(t)-z^2(t)\|^2_{\mathcal{H}} \le D_{BT} \|z^1_0-z^2_0\|^2_{\mathcal{H}}, \quad \forall \, t \in [0,T],
		\end{equation} 
		where $D_{BT}>0$ is constant. 
	\end{enumerate} 
\end{Theorem}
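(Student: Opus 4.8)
The plan is to recast \eqref{NW} as a semilinear abstract Cauchy problem and combine standard semigroup theory with energy estimates exploiting the structural hypotheses \eqref{f1}--\eqref{damping}. Writing $U=(u,\partial_t u)^{\mathsf T}$, let $\mathbb{A}_0=\begin{bmatrix} 0 & -I \\ -\Delta & 0\end{bmatrix}$ be the undamped wave operator, which generates a $C_0$-group of isometries on $\mathcal{H}$ and on $D(\mathbb{A}_0)=D(\mathbb{A})$, and collect the remaining terms into the perturbation $\mathbb{G}(U)=(0,-a\,g(\partial_t u)-f(u))^{\mathsf T}$. First I would verify the two facts that make this work in dimension $N=3$: since $a\in L^\infty(M)$ and $g'\le m_2$ (so $g$ is globally Lipschitz), the damping part $U\mapsto(0,-a\,g(\partial_t u))$ is globally Lipschitz on $\mathcal{H}$; and since $|f'(z)|\le C_f(1+|z|^2)$ together with $H^1_0(M)\hookrightarrow L^6(M)$, the map $U\mapsto(0,-f(u))$ is locally Lipschitz on $\mathcal{H}$, its constant on a ball of $\mathcal{H}$ being controlled by the $L^6$-norms. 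Hence $\mathbb{G}$ is locally Lipschitz, and the Banach fixed point theorem in $C([0,T_0];\mathcal{H})$ via the Duhamel formula yields a unique local mild solution.

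Second, I would promote this to a global and, for smooth data, strong solution. Local existence gives a maximal solution; to rule out finite-time blow-up I would use the energy identity, obtained for strong solutions (data in $D(\mathbb{A})$) by multiplying the equation by $\partial_t u$ and integrating, so that with $F(s)=\int_0^s f$,
$$
\frac{d}{dt}\Big(\tfrac12\|\partial_t u\|_2^2+\tfrac12\|\nabla u\|_2^2+\int_M F(u)\,dx\Big)=-\int_M a\,g(\partial_t u)\,\partial_t u\,dx.
$$
Condition \eqref{f2} with the Poincaré inequality (first eigenvalue $\lambda_1$) makes the functional on the left coercive, comparable to $\|U\|_{\mathcal{H}}^2$ up to an additive constant; since $g(s)s\ge0$ and $a\in L^\infty$, the right-hand side is controlled by $C\big(E(t)+1\big)$ through this coercivity, so Gronwall's lemma gives an a priori bound on $\|U(t)\|_{\mathcal{H}}$ on every finite interval. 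This forbids blow-up and extends the solution to $\mathbb{R}^+$, and the bound passes to weak solutions by density of $D(\mathbb{A})$ in $\mathcal{H}$. Elliptic regularity plus the $C^1$ hypotheses, using the embedding $H^2(M)\hookrightarrow L^\infty(M)$ valid for $N=3$ (which guarantees $f(u),\,a\,g(\partial_t u)\in H^1_0(M)$ when $u\in H^2\cap H^1_0$), show that mild solutions with data in $D(\mathbb{A})$ are strong and remain in $D(\mathbb{A})$, yielding \eqref{WPi} and the strong-solution regularity.

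Third, the continuous dependence estimate \eqref{uni1} (which also gives uniqueness) follows by an energy argument for the difference $w=u^1-u^2$, which solves a wave equation with source $-a\big(g(\partial_t u^1)-g(\partial_t u^2)\big)-\big(f(u^1)-f(u^2)\big)$. Multiplying by $\partial_t w$, the damping difference has the favorable sign (or is absorbed by its global Lipschitz bound), while the forcing difference is estimated by
$$
\Big|\int_M \big(f(u^1)-f(u^2)\big)\,\partial_t w\,dx\Big|\le C\int_M\big(1+|u^1|^2+|u^2|^2\big)\,|w|\,|\partial_t w|\,dx\le C_{B}\,\|w\|_{H^1}\,\|\partial_t w\|_2,
$$
using Hölder with exponents $(3,6,2)$ and the a priori bound on $\|u^i\|_{L^6}$ over $[0,T]$ from the second step. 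A further application of Gronwall then yields \eqref{uni1} with $D_{BT}=e^{C_B T}$.

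The step I expect to be the main obstacle is this last estimate at the critical exponent: because $|f'|$ grows like $|z|^2$, the Hölder bookkeeping $\tfrac13+\tfrac16+\tfrac12=1$ only just closes, and the Lipschitz constant is forced to depend on the $L^6$-norms of the two solutions rather than being uniform. It is precisely the a priori energy bound that keeps these norms under control on $[0,T]$, so the criticality is handled but leaves no slack; any supercritical growth would break the scheme.
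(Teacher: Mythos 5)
Your overall route---Duhamel fixed point for a locally Lipschitz perturbation of the wave group, coercivity of the energy from \eqref{f2} plus Poincar\'e and Gronwall to exclude blow-up, and the critical H\"older splitting $\tfrac{1}{3}+\tfrac{1}{6}+\tfrac{1}{2}=1$ with the a priori $L^6$ bounds for \eqref{uni1}---is exactly the standard argument the paper itself invokes: it only remarks that $\mathbb{F}$ is locally Lipschitz on $\mathcal{H}$ and defers to semigroup theory and to \cite{clt,fz}. The one cosmetic difference is that the paper folds the monotone damping into the operator $\mathbb{A}$, whereas you treat it as a globally Lipschitz perturbation of the free wave group; under \eqref{g} these are equivalent, and your version has the small merit of not needing any global sign condition on $a$ (which \eqref{damping} indeed does not impose), since you absorb the damping term by Gronwall. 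Parts 1 and 3 of your proposal are correct as written.

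Your justification of part 2, however, contains a step that fails: you claim that $a\,g(\partial_t u)\in H^1_0(M)$ when $u\in H^2(M)\cap H^1_0(M)$. This is false in general, because \eqref{damping} assumes only $a\in L^\infty(M)$: multiplication by a merely bounded, non-differentiable coefficient does not preserve $H^1$, as the product rule would require $\nabla a$. (The other ingredients are fine: $g(\partial_t u)\in H^1_0$ since $g$ is Lipschitz with $g(0)=0$, and $f(u)\in H^1_0$ using $f(0)=0$ and $f'(u)\nabla u\in L^2$.) Fortunately, $H^1_0$-membership of the right-hand side is not the correct mechanism for strong solutions---it would only be relevant for a further bootstrap with data in $D(\mathbb{A}^2)$. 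The standard repair is: on any bounded interval the trajectory stays in a ball of $\mathcal{H}$ on which the nonlinearity is Lipschitz; since $\mathcal{H}$ is reflexive, a mild solution with initial datum in $D(\mathbb{A})$ is then Lipschitz in time with values in $\mathcal{H}$, hence differentiable almost everywhere, and is a strong solution with $(u(t),\partial_t u(t))\in D(\mathbb{A})$ for all $t$ (Pazy's theorem for semilinear equations in reflexive spaces). Elliptic regularity applied to $-\Delta u=-\partial_t^2 u-a\,g(\partial_t u)-f(u)\in L^2(M)$ then gives $u(t)\in H^2(M)$, and the stated continuity in the graph norm follows from the equation together with an energy estimate for the time-differentiated problem, as in \cite{clt,fz}. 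Note this is a replacement of your step, not a patch: the claim $a\,g(\partial_t u)\in H^1_0(M)$ cannot be salvaged under \eqref{damping}. With this substitution your proposal is complete and coincides with the proof the paper delegates to the literature.
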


The total energy of the problem \eqref{NW} is defined by    
\begin{equation} \label{def-energy}
\mathcal{E}(t)= \frac{1}{2} \|(u(t),\partial_t u(t)) \|_{\mathcal{H}}^2 
+ \int_{M} F(u(t)) \,dx ,
\end{equation}
with $F(u)=\int_{0}^{u}f(r)dr$. To avoid confusion, sometimes we write $\mathcal{E}_{u}$ instead $\mathcal{E}$.
We finish this section with some useful energy estimates.

\begin{Lemma} Under the assumptions of Theorem \ref{thm-wp}
\begin{enumerate}
\item The total energy is non-increasing and  
	\begin{equation} \label{energy2}
	\frac{d}{dt} \mathcal{E}_u(t) = - \int_{M} a(x)g(\partial_t u) \partial_t u \, dx, \;\; t \ge 0.
	\end{equation}
	
\item There exist constants $\beta, C_1,C_2>0$ such that 
	\begin{equation} \label{EE}
	\beta \Vert (u(t),\partial_t u(t)) \Vert^2_{\mathcal{H}}  - C_1  \le \mathcal{E}_{u}(t) 
	\le C_2 \big( 1+\Vert (u(t),\partial_t u(t)) \Vert_{\mathcal{H}}^4), \;\; t \ge 0.
	\end{equation}
\item There exists a constant $C_0>0$ such that for any initial data $(u_0,u_1) \in \mathcal{H}$ 
	\begin{equation} \label{compE}
 \|(u(t),\partial_t u(t))\|^2_{\mathcal{H}} \le C_0(1+\|(u_0,u_1)\|^4_{\mathcal{H}}), \;\; t \ge 0.
	\end{equation}	
\end{enumerate}
\end{Lemma}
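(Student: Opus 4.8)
The plan is to handle the three items in order, deriving the dissipation identity first, then the two-sided bound \eqref{EE}, and finally reading off \eqref{compE} from their combination. For item 1, I would first establish \eqref{energy2} for strong solutions $(u_0,u_1)\in D(\mathbb{A})$, where every manipulation is licit. Multiplying the equation in \eqref{NW} by $\partial_t u$ and integrating over $M$, the gradient term $\int_M \nabla u\cdot\nabla\partial_t u\,dx$ appears after integrating $-\int_M\Delta u\,\partial_t u\,dx$ by parts, the boundary contribution vanishing because $u=0$ (hence $\partial_t u=0$) on $\partial M$. Recognizing the sum $\int_M(\nabla u\cdot\nabla\partial_t u+\partial_t u\,\partial_t^2 u+f(u)\partial_t u)\,dx=\tfrac{d}{dt}\mathcal{E}_u(t)$ and substituting $\partial_t^2 u-\Delta u+f(u)=-a(x)g(\partial_t u)$ from \eqref{NW} yields \eqref{energy2}. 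Monotonicity then follows from $a\ge 0$ together with $g(s)s\ge 0$ for all $s$, which is a consequence of $g(0)=0$ and $g'\ge m_1>0$ in \eqref{g}, so the right-hand side of \eqref{energy2} is nonpositive. I would extend the identity to arbitrary weak solutions by density of $D(\mathbb{A})$ in $\mathcal{H}$, using the continuous dependence \eqref{uni1} to pass to the limit.

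For the lower bound in item 2, the key is assumption \eqref{f2}: from $\liminf_{|z|\to\infty}f(z)/z>-\lambda_1$ I would extract $\mu\in(0,\lambda_1)$ and $R>0$ with $f(z)z\ge(-\lambda_1+\mu)z^2$ for $|z|>R$; integrating $f$ and absorbing the compact range $|z|\le R$ (on which $F$ is bounded) into a constant gives $F(z)\ge-\tfrac{1}{2}(\lambda_1-\mu)z^2-C$ for all $z\in\mathbb{R}$. Using the Poincaré inequality $\lambda_1\|u\|_2^2\le\|\nabla u\|_2^2$, the quadratic defect $-\tfrac{1}{2}(\lambda_1-\mu)\|u\|_2^2$ is strictly dominated by $\tfrac{1}{2}\|\nabla u\|_2^2$, leaving a positive multiple of $\|\nabla u\|_2^2$; this produces $\beta$ and $C_1$. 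For the upper bound, \eqref{f1} and $f(0)=0$ give $|f(z)|\le C(|z|+|z|^3)$, hence $|F(z)|\le C(|z|^2+|z|^4)$. Since $N=3$, the embedding $H^1_0(M)\hookrightarrow L^4(M)$ is continuous (indeed $4<2^{*}=6$), so $\int_M|u|^4\,dx\le C\|\nabla u\|_2^4$; combined with the elementary inequality $t\le\tfrac{1}{2}(1+t^2)$ applied to $t=\|(u,\partial_t u)\|_{\mathcal{H}}^2$ to absorb the quadratic part, this yields the right-hand inequality of \eqref{EE}.

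Item 3 is then immediate from the first two: by item 1, $\mathcal{E}_u(t)\le\mathcal{E}_u(0)$, and chaining the lower bound of \eqref{EE} at time $t$ with the upper bound at time $0$ gives $\beta\|(u(t),\partial_t u(t))\|_{\mathcal{H}}^2-C_1\le\mathcal{E}_u(t)\le\mathcal{E}_u(0)\le C_2(1+\|(u_0,u_1)\|_{\mathcal{H}}^4)$. Dividing by $\beta$ and enlarging constants produces \eqref{compE}.

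The only genuinely delicate point will be the rigorous passage of the dissipation identity \eqref{energy2} from strong to weak solutions, since for a merely weak solution $\mathcal{E}_u$ is continuous but the individual terms are not a priori differentiable; I would resolve this by approximating the initial data in $D(\mathbb{A})$ and invoking \eqref{uni1} to pass to the limit in the integrated form of \eqref{energy2}. The remaining steps are routine, resting on the spectral gap $\mu$ in \eqref{f2} absorbed by Poincaré for the lower bound, and on the subcritical $L^4$ embedding in three dimensions for the upper bound.
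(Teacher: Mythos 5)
Your proposal is correct and follows essentially the same route as the paper: a density reduction to regular solutions followed by multiplication by $\partial_t u$ for \eqref{energy2}, the quadratic lower bound $F(z)\ge -\delta z^2-C$ from \eqref{f2} absorbed via the Poincar\'e constant $\lambda_1$ for the left inequality in \eqref{EE} (the paper likewise takes $\beta$ proportional to the spectral gap over $\lambda_1$), the growth condition \eqref{f1} for the right inequality, and the chaining of monotonicity with the two-sided bound for \eqref{compE}. Your treatment is in fact more detailed than the paper's (explicit $H^1_0(M)\hookrightarrow L^4(M)$ embedding, explicit limit passage in the integrated dissipation identity), and your implicit use of $a\ge 0$ on all of $M$ for monotonicity matches the paper's intended reading of \eqref{damping}.
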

	 
\begin{proof} By a density argument we can assume the solutions are regular. Then multiplying the equation in \eqref{NW} by $\partial_t u$ and integration by parts imply that \eqref{energy2} holds. 
To prove the first inequality of \eqref{EE} we observe that assumption \eqref{f2} implies that there exists 
$\beta,C > 0$ (depending of $f$) such that $F(u) \ge - \delta u^2 - C$ for all $u \in \mathbb{R}$. 
Then 
$$
\int_M F(u) \, dx \ge \frac{1}{2}\left(- 1 + \frac{\delta}{\lambda_1} \right) \| \nabla u \|_2^2 - C |M|, 
$$	
which implies \eqref{EE} with $\beta = \delta/(2 \lambda_1)$. The second inequality of \eqref{EE} follows from the growth condition of $f$. 
Finally, the proof of \eqref{compE} follows from \eqref{EE} and the fact that energy is non-increasing.  
\end{proof}

\subsection{Gradient structure} \label{sec-gradient} 

A dynamical system $(\mathcal{H},S(t))$ is gradient if it possesses a Lyapunov functional, that is,  
a function $\Psi: \mathcal{H} \to \mathbb{R}$ such that $t\mapsto \Psi(S(t)z)$ is non-increasing and if 
\begin{equation} \label{lyapunov}
\Psi(S(t)z)=\Psi(z), \quad \forall \, t \ge 0,
\end{equation}
then $z$ is fixed point of $S(t)$. 

\begin{Theorem}[Gradient structure] \label{thm-gradient} 
Under assumptions of Theorem \ref{thm-attractor} the dynamical system $(\mathcal{H},S(t))$ associated to the problem \eqref{NW} is gradient. Moreover, the total energy $\mathcal{E}(t)$ as a Lyapunov functional. 
\end{Theorem}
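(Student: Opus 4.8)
The plan is to verify the two defining properties of a Lyapunov functional for $\Psi = \mathcal{E}$: that $t \mapsto \mathcal{E}(S(t)z)$ is non-increasing, and that constancy in time forces $z$ to be a fixed point (i.e.\ a stationary solution). The first property is essentially immediate from the energy identity \eqref{energy2}. Indeed, since $g(0)=0$ and $m_1 \le g'(z) \le m_2$ by \eqref{g}, the map $g$ is strictly increasing with $g(s)s \ge m_1 s^2 \ge 0$, and since $a \ge 0$ (it is nonnegative where it matters, being $\ge a_0$ on $\omega$ and in $L^\infty$), the right-hand side of \eqref{energy2} is $\le 0$. Hence $\frac{d}{dt}\mathcal{E}_u(t) \le 0$ and $\mathcal{E}$ decays along trajectories. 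By a density argument (as already used in the preceding lemma), this inequality passes from strong to weak solutions.

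Next I would address the rigidity part, which is where the unique continuation property enters. Suppose $\mathcal{E}(S(t)z) = \mathcal{E}(z)$ for all $t \ge 0$. Then $\frac{d}{dt}\mathcal{E}_u(t) = 0$ for a.e.\ $t$, so by \eqref{energy2} we have $\int_M a(x) g(\partial_t u)\partial_t u\, dx = 0$ for all $t$. Using $a \ge a_0 > 0$ on $\omega$ together with the coercivity $g(s)s \ge m_1 s^2$, this forces $\partial_t u = 0$ a.e.\ in $\omega \times (0,\infty)$. The plan is then to apply the unique continuation statement of Theorem \ref{thm-ucp} to deduce that $u$ is independent of $t$ on all of $M$, hence a stationary solution, i.e.\ a fixed point of $S(t)$.

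The main obstacle is bridging from ``$\partial_t u = 0$ in $\omega$'' to the hypothesis actually needed by Theorem \ref{thm-ucp}, which is a statement about a \emph{solution of the linear problem} \eqref{L1} vanishing in $\omega \times (0,T)$. The natural device is to differentiate \eqref{NW} in time and set $w = \partial_t u$. Formally $w$ solves $\partial_t^2 w - \Delta w = -a(x)g'(\partial_t u)\partial_t w - f'(u) w$, which is of the form \eqref{L1} with $p_0 = -f'(u)$ and $p_1 = -a(x)g'(\partial_t u)$. One must check that these potentials satisfy \eqref{p0p1-1}--\eqref{p0p1-2}: the bound $|f'(u)| \le C_f(1+|u|^2)$ from \eqref{f1} together with the Sobolev embedding $H^1_0(M)\hookrightarrow L^6(M)$ in dimension three gives exactly the admissible growth for $p_0$ to satisfy \eqref{p0p1-2}, while $p_1 \in L^\infty$ follows from $a \in L^\infty$ and $g' \le m_2$. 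Since $w = \partial_t u = 0$ in $\omega \times (0,T)$, Theorem \ref{thm-ucp}(2) yields $w \equiv 0$ on $M \times [0,\infty)$, i.e.\ $\partial_t u \equiv 0$, so $u$ is stationary and $z=(u,0)$ is a fixed point.

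The delicate point to handle with care is the regularity needed to justify differentiating the equation and to guarantee that $w = \partial_t u$ genuinely lies in the class $L^2(0,T;H^1_0(M)) \cap H^1(0,T;L^2(M))$ required by Theorem \ref{thm-ucp}; for strong solutions $(u_0,u_1)\in D(\mathbb{A})$ this is available from Theorem \ref{thm-wp}, and one extends to the invariant set (trajectories lying on the attractor, or more simply complete bounded trajectories, which enjoy extra regularity) so that the argument applies where it is actually invoked in the attractor construction. Thus $\mathcal{E}$ is a genuine Lyapunov functional and $(\mathcal{H},S(t))$ is gradient.
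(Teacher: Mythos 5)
Your proposal is correct and follows essentially the same route as the paper: monotonicity from the energy identity \eqref{energy2}; then $\partial_t u=0$ a.e.\ in $\omega\times(0,T)$ from \eqref{g} and \eqref{damping}; time-differentiation so that $w=\partial_t u$ solves a problem of the form \eqref{L1}; verification of \eqref{p0p1-1}--\eqref{p0p1-2} via $|f'(u)|\le C_f(1+|u|^2)$ and the embedding $H^1_0(M)\hookrightarrow L^6(M)$ in dimension three; and finally the unique continuation statement of Theorem \ref{thm-ucp}. The only cosmetic difference is that the paper first deduces $a(x)g(\partial_t u)=0$ a.e.\ from the vanishing of the dissipation integral, so that $u$ solves the undamped system \eqref{G3} and one may take $p_1=0$, whereas you keep $p_1=-a(x)g'(\partial_t u)\in L^\infty(M\times(0,T))$, which is equally admissible under \eqref{p0p1-1}. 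One caution on your closing regularity discussion: the paper implements the regularity bridge exactly as in your first suggestion, approximating $(u_0,u_1)$ by data $(u_0^k,u_1^k)\in D(\mathbb{A})$ and applying Theorem \ref{thm-ucp} to $w^k=\partial_t u^k$ for each $k$; your alternative fallback --- invoking the extra regularity of complete trajectories in the attractor --- would be circular here, since the gradient structure is precisely one of the inputs to the existence of the attractor (via \cite[Corollary 7.5.7]{CL-yellow}), and moreover the rigidity property \eqref{lyapunov} must be verified for arbitrary $z\in\mathcal{H}$, not only for points lying on $\mathcal{A}$.
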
 

\begin{proof} We show that total energy defined in \eqref{def-energy} is a Lyapunov functional. 
Consider a solution $(u,\partial_tu)=S(t)z$ of \eqref{NW}. Then by \eqref{energy2}
$$
\Psi(S(t)z) = \frac{1}{2}\| (u(t),\partial_t u(t)) \|^2_{\mathcal{H}}+\int_{M} F(u(t)) \, dx
$$
satisfies  
\begin{equation} \label{h1}
\frac{d}{dt} \Psi(S(t)z) = - \int_{M} a(x)g(\partial_t u(t))\partial_t u(t) \, dx, \quad t \ge 0. 
\end{equation}
This shows that $\Psi(S(t)z)$ is decreasing with respect to $t$. The rest of the proof is splitted into two steps.

\bigskip

{\it Step 1.} (The role of Lyapunov function) Suppose that $z_0=(u_0,u_1)$ satisfies \eqref{lyapunov}. Then \eqref{h1} implies that 
$$
\int_{M} a(x)g(\partial_t u)\partial_t u \, dx = 0,
$$
and from assumption \eqref{g} we infer that   
$$
\int_{\omega} |\partial_t u|^2 \, dx =0 \quad \mbox{and} \quad \int_{M} a(x) | g(\partial_t u ) |^2 \, dx = 0.
$$
Therefore $S(t)z_0=(u(t), \partial_t u(t))$ is a $C^0([0,T];\mathcal{H})$ solution of  
the undamped system 
\begin{equation} \label{G3}
\left\{
\begin{aligned}
& \partial^2_t u - \Delta u + f(u) = 0 \quad \hbox{in} \quad M \times (0,T), \\
& u = 0 \quad \hbox{on} \quad \partial M \times (0,T), \\
& u(0) = u_0, \quad \partial_t u(0)= u_1 \quad \hbox{in} \quad M,  
\end{aligned}
\right.
\end{equation} 
with supplementary condition 
\begin{equation} \label{over}
\partial_t u = 0 \quad \hbox{a.e. in} \; \omega \times (0,T).  
\end{equation} 
We see that if $(u_0^k,u_1^k) \in D(\mathbb{A})$ converge to $(u_0,u_1)$ in $\mathcal{H}$, 
then their corresponding strong solutions $(u^k(t),\partial_t u^k(t))$ satisfy 
\eqref{G3}-\eqref{over}. 

\bigskip

{\it Step 2.} (Applying the unique continuation property)   
Let us denote $w^k=\partial_t u^k$. Then we see that $(w^k, \partial_t w^k)\in C^0([0,T]; \mathcal{H})$ is 
a weak solution of 
$$ 
\left\{
\begin{aligned}
& \partial^2_t w^k - \Delta w^k + f'({u^k}){w^k}=0 \quad \mbox{in} \quad M\times (0,T), \\ 
& w^k = 0 \quad \hbox{on} \quad \partial M \times (0,T), \\
& w^k = 0 \quad \mbox{in} \quad  \omega \times (0,T).
\end{aligned}
\right.
$$  
For each $k \in \mathbb{N}$ we shall apply Theorem \ref{thm-ucp} with 
$$
p_0 = f'(u^k) \quad  \mbox{and} \quad p_1=0. 
$$
Clearly $p_1 \in L^{\infty}(0,T;L^{\infty}(M))$. Then it is enough to show that \eqref{p0p1-2} holds with $w^k$ replacing $w$. Indeed, we have 
\begin{align*}
\int_M | p_0 w^k |^2 \, dx 
& \le  C \int_M (1 + |u^k|^4)|w^k|^2 \, dx \\
& \le C \left( 1 +  \|u^k\|_{L^6(M)}^4 \right) \|w^k\|_{L^6(M)}^2. 
\end{align*}
Since $u^k \in C^0([0,T];H^1_0(M))$ and $w^k \in H^1_0(M)$, 
$$
\| p_0 w^k \|_{L^2(0,T;L^2(M))}^2 \le C_T^k \int_0^T \| \nabla w^k(t)\|_{2}^2 \,dt,   
$$
which is the required estimate. 
Applying Theorem \ref{thm-ucp}, we get $w^k=0$ in $M \times [0,T]$ for each $k \in \mathbb{N}$,  
so that $\partial_t u(t)=0$ a.e. in $M$, for all $t \in [0,T]$. Therefore 
$z_0 = (u_0,0)$ is a stationary solution. This concludes the proof.
\end{proof}

\subsection{Quasi-stability} 

In order to prove the asymptotic smoothness and further properties of global attractors,  
we apply a recent theory of quasi-stable systems \cite{chueshov-book,CL-yellow} 
that is very useful for studying long-time dynamics of nonlinear wave equations. 
Its framework is based on a system $(H,S(t))$ with $H = X \times Y$, where $X$ and $Y$ are Banach 
spaces and $X \hookrightarrow Y$ compactly. 
Moreover, given $z_0=(u_0,u_1) \in H$, the trajectory $S(t)z_0 = (u(t),\partial_t u(t))$ satisfies    
$$
u \in C^0(\mathbb{R}^{+};X) \cap C^1(\mathbb{R}^{+};Y).
$$ 
In order to present the definition of quasi-stability, given a set $B$ and $z^1,z^2 \in B$, 
let us denote the corresponding trajectories as
$$
S(t)z^i = (u^i(t),\partial_t u^i(t)), \;\; i=1,2, \;\; t \ge 0. 
$$
Under the above setting, the dynamical system $(H,S(t))$ is said to be quasi-stable in a set $B \subset H$ if there exist positive constants $\zeta$ and $C_B$ such that for any $z^1,z^2 \in B$,
\begin{equation} \label{quasi-stability} 
\|S(t)z^1 - S(t)z^2\|_{H}^2 \le e^{-\zeta t} \|z^1-z^2\|_{H}^2 + C_{B} \sup_{s \in  [0,t]} \| u^1(s) - u^2(s) \|_W^2,   
\end{equation}
where $W \subset Y$ is a Banach space with compact embedding $X \hookrightarrow W$. 

\begin{Remark} \label{rem-quasistability} \rm Quasi-stable systems have three major features with respect to global attractors. $(a)$ If a system is quasi-stable on any forward invariant bounded set, then it is asymptotically smooth (cf. \cite[Proposition 7.9.4]{CL-yellow}). 
$(b)$ If a system possesses a global attractor $\mathcal{A}$ and it is quasi-stable on $\mathcal{A}$, then that attractor has finite fractal dimension (cf. \cite[Theorem 7.9.6]{CL-yellow}). $(c)$ The constant $C_B>0$ in (\ref{quasi-stability}) can be replaced by a $L^1_{\rm loc}$ function. 
However, in the case $C_B$ is a constant, the complete trajectories $(u(t),\partial_t u(t))$ inside global attractor have further time-regularity, namely,  
\begin{equation} \label{t-regular}
\| \partial_t u (t) \|_{X} + \| \partial_t^2 u(t) \|_{Y} \le R , \quad t \in \mathbb{R}, 
\end{equation}
where $R>0$ is a constant (cf. \cite[Theorem 7.9.8]{CL-yellow}). \qed  
\end{Remark}

In the following we prove that our system is quasi-stable on any bounded forward invariant set. 

\begin{Theorem} [Quasi-stability] \label{thm-stable} The dynamical system $(\mathcal{H},S(t))$ generated by problem \eqref{NW} 
is quasi-stable on any forward invariant bounded set $B$ of $\mathcal{H}$. More precisely, there exist positive constants $\zeta$ and $C_B$ such that any two given solutions $z^i=(u^i,\partial_t u^i)$, $i=1,2$ of problem \eqref{NW} with initial data $z^1_0, z^2_0 \in B$, fulfills  
\begin{equation} \label{esta}
\| z^1(t) - z^2(t) \|_{\mathcal{H}}^2 \leq e^{-\zeta t} \|z^1_0 - z^2_0 \|_{\mathcal{H}}^2 
+ C_B\sup_{s \in [0,t]} \|u^1(s)-u^2(s)\|^2_{L^3(M)}, \;\; t\ge 0.   
\end{equation}
\end{Theorem}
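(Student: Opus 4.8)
The plan is to prove the quasi-stability estimate \eqref{esta} by a standard energy method for the difference of two solutions, combined with the observability inequality of Theorem \ref{thm-ucp}. First I would set $w = u^1 - u^2$ and $z = z^1 - z^2 = (w,\partial_t w)$. Subtracting the two copies of \eqref{NW} shows that $w$ solves
\begin{equation} \label{diff-eq}
\partial_t^2 w - \Delta w + a(x)\big(g(\partial_t u^1)-g(\partial_t u^2)\big) + \big(f(u^1)-f(u^2)\big) = 0,
\end{equation}
with homogeneous Dirichlet data. The energy of the difference is $E_w(t) = \tfrac12 \|z(t)\|_{\mathcal{H}}^2$. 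Multiplying \eqref{diff-eq} by $\partial_t w$ and integrating by parts gives the dissipation identity
\begin{equation} \label{diss}
E_w(T) - E_w(0) = - \int_0^T\!\!\int_M a(x)\big(g(\partial_t u^1)-g(\partial_t u^2)\big)\partial_t w\, dx\, dt - \int_0^T\!\!\int_M \big(f(u^1)-f(u^2)\big)\partial_t w\, dx\, dt .
\end{equation}
Using \eqref{g}, the first integrand is bounded below by $a_0 m_1 |\partial_t w|^2$ on $\omega$ (after a sign analysis using $m_1\le g'\le m_2$), so the damping term controls $\int_0^T\!\!\int_\omega |\partial_t w|^2$ from above by the energy drop; the forcing term is treated as a lower-order perturbation via \eqref{f1} and Sobolev embeddings, producing the $L^3$-type remainder that ultimately appears on the right of \eqref{esta}.

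The heart of the argument is to bridge from dissipation to exponential decay, and this is where Theorem \ref{thm-ucp} enters. The observability inequality \eqref{obsWAVE} is stated for the linear equation \eqref{L1} with potentials $p_0,p_1$, so I would recast \eqref{diff-eq} in that form by writing
$$
p_1 = -a(x)\,\frac{g(\partial_t u^1)-g(\partial_t u^2)}{\partial_t w}, \qquad p_0 = -\frac{f(u^1)-f(u^2)}{w},
$$
(interpreted suitably where denominators vanish), and checking that $p_0,p_1$ satisfy \eqref{p0p1-1}--\eqref{p0p1-2}: boundedness of $p_1$ follows from \eqref{g}, while the estimate \eqref{p0p1-2} for $p_0$ follows from \eqref{f1} exactly as in the verification carried out in the proof of Theorem \ref{thm-gradient}. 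Applying \eqref{obsWAVE} then yields a lower bound of the form $\int_0^T\!\!\int_\omega |\nabla w|^2 \ge k_T\big(\|z(0)\|_{\mathcal{H}}^2 + \|z(T)\|_{\mathcal{H}}^2\big)$, converting control over $\nabla w$ on $\omega$ into control of the full energy.

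The remaining and most delicate step is to absorb $\int_0^T\!\!\int_\omega |\nabla w|^2$ using only the velocity dissipation $\int_0^T\!\!\int_\omega |\partial_t w|^2$ supplied by \eqref{diss}. This is precisely the role of the observability inequality being phrased in potential energy rather than kinetic energy: one uses multiplier identities (an equipartition-of-energy argument, multiplying \eqref{diff-eq} by $w$ and integrating) to trade $\int\!\!\int_\omega |\nabla w|^2$ against $\int\!\!\int_\omega |\partial_t w|^2$ plus lower-order terms controlled by $\sup_{[0,t]}\|w\|_{L^3(M)}^2$, the compactness term being legitimate because $H^1_0(M) \hookrightarrow L^3(M)$ compactly in dimension $3$. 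Combining this with the observability bound produces $\|z(0)\|_{\mathcal{H}}^2 + \|z(T)\|_{\mathcal{H}}^2 \le C\big(E_w(0)-E_w(T)\big) + C_B \sup_{[0,T]}\|w\|_{L^3}^2$, which is a one-step dissipativity estimate over the fixed window $[0,T]$. Iterating this estimate over successive intervals of length $T$ and summing the geometric series upgrades it to the exponential form \eqref{esta} with some $\zeta>0$. I expect the main obstacle to be the careful bookkeeping in the multiplier/equipartition step: one must ensure that every term not controlled by the $\omega$-localized velocity or by the compact $L^3$ remainder is genuinely absorbable into the energy difference, and that the constants remain uniform over the bounded invariant set $B$ (using \eqref{compE} and \eqref{EE} for a priori bounds), so that the iteration closes with a single $C_B$.
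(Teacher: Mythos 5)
Your overall architecture does match the paper's (difference equation recast as \eqref{L1} with potentials $p_0,p_1$, energy dissipation, localized multipliers, the potential-energy observability of Theorem \ref{thm-ucp}, a compact $L^3$ remainder, and iteration over windows of length $T$), but there is a genuine gap at the critical exponent. Your claim that the forcing difference is ``a lower-order perturbation via \eqref{f1} and Sobolev embeddings, producing the $L^3$-type remainder'' does not hold: with $|f'(z)|\le C_f(1+|z|^2)$ and only the energy bound $u^i\in L^\infty(0,T;H^1_0(M))\hookrightarrow L^\infty(0,T;L^6(M))$, H\"older gives either $\|p_0 w\|_{L^2(M)}\lesssim \left(1+\|u^1\|^2_{L^6}+\|u^2\|^2_{L^6}\right)\|w\|_{L^6(M)}$, which is a full-energy, \emph{non-compact} bound, or else requires a factor $\|u^i\|^2_{L^{12}(M)}$ that energy estimates do not provide. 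The paper closes exactly this point in Lemma \ref{lemma-p0w} by invoking Strichartz estimates: the solutions satisfy $u^i\in L^4(0,T;L^{12}(M))$ uniformly on $B$, whence $\|p_0 w\|_{L^1(0,T;L^2(M))}\le C_{BT}\sup_{t\in[0,T]}\|w(t)\|_{L^3(M)}$, and it is this estimate that converts the critical term into the compact remainder in \eqref{esta} (and is what allows $f\in C^1$ rather than $C^2$). Without Strichartz or an equivalent device, your plan does not close.

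Second, your closing step is under-specified in a way that matters. The localized equipartition multiplier produces, besides $\int_0^T\!\int_\omega|\partial_t w|^2$ and compact terms, time-boundary contributions $\psi(T)-\psi(0)$ with $\psi(t)=\int_\omega w\,\partial_t w\,dx$, of size comparable to $E(0)+E(T)$ with a constant that is \emph{not} small relative to $k_T$; since $k_T$ is not known to be large, combining observability with the multiplier bound does not directly yield your one-step inequality $\|z(0)\|^2_{\mathcal{H}}+\|z(T)\|^2_{\mathcal{H}}\le C\big(E_w(0)-E_w(T)\big)+C_B\sup_{[0,T]}\|w\|^2_{L^3(M)}$. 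The paper circumvents this obstruction by embedding the multipliers into the perturbed energy $\Phi=\mu E+\eta\phi+\psi$, which is equivalent to $E$ for $\mu$ large (so the boundary-in-time terms are absorbed into the norm equivalence), applying Gronwall, and only then using the observability inequality to dominate the $\eta$-weighted kinetic term and a $\rho$-term through the explicit smallness choices in \eqref{eta}, with $\eta,\rho$ tuned against $k_T e^{-\sqrt{\lambda_1}T/2}$; exponential decay then follows by iterating over $[kT,(k+1)T]$ as you propose. You correctly flagged the bookkeeping as the main obstacle, but the fix is this specific perturbed-energy/Gronwall mechanism, not routine absorption.
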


Note that (\ref{esta}) is a quasi-stability inequality like (\ref{quasi-stability}) since 
$X=H^1_0(M)$ is compactly embedded in $W=L^3(M)$.

The proof of this theorem will be given through several lemmas. 
Firstly we see that solution operator $S(t)$ of problem \eqref{NW} defined on the phase space $\mathcal{H}$
satisfies \eqref{WPi} and consequently our system $(\mathcal{H},S(t))$ falls in the framework of quasi-stable systems. 
Therefore to prove the quasi-stability on forward invariant bounded sets of $\mathcal{H}$ it is enough to prove the 
inequality \eqref{esta}. To this end, putting $w = u^1 - u^2$, 
we see that $(w,\partial_t w)$ satisfies the equation
\begin{equation} \label{w1}
\left\{\begin{array}{l}
\partial^2_t w-\Delta w = p_0 w+p_1 \partial_t w \quad \mbox{in} \quad M \times (0,\infty), \\
w=0 \quad \mbox{on} \quad \partial M\times (0,\infty),\\
w(0) = w_0, \;\; \partial_t w(0) = w_1 \quad \mbox{in} \quad M,
\end{array}\right.
\end{equation}
where $(w_0,w_1)= z^1_0 - z^2_0$,  
\begin{equation} \label{p0p1-our}
p_0 = - f'(\alpha u^1 + (1-\alpha) u^2) \;\; \mbox{and} \;\; p_1 = - ag'(\beta \partial_t u^1 + (1-\beta) \partial_t u^2), 
\end{equation}
$\alpha, \beta \in [0,1]$. The energy of the system is defined by 
$$
E (t) = \frac{1}{2} \| (w(t),\partial_t w(t)) \|_{\mathcal{H}}^2 
= \frac{1}{2} \| z^1(t) - z^2(t) \|_{\mathcal{H}}^2. 
$$
We see that  
\begin{equation} \label{Eprime}
\frac{d}{dt} E \le -a_0m_1 \|\partial_t w\|_{L^2(\omega)}^2-\int_{M}p_0 w\partial_t w \, dx.
\end{equation}
In order to establish estimate \eqref{esta} we shall use perturbed energy method.  
Let us define 
$$
\phi(t)=\int_{M} w(t) \partial_t w(t)\,dx, \quad \psi(t) = \int_{\omega} w(t) \partial_t w(t) \, dx,
$$
and  
$$
\Phi(t)= \mu E(t) + \eta \phi(t) + \psi(t), 
$$
where $\mu,\eta>0$ are to be fixed later.

\medskip

\begin{Lemma} \label{lemma-Ew}	Under the above assumptions and notations,
\begin{enumerate}
	\item For $\mu$ large and $\eta \le 1$ we have 
	\begin{equation} \label{equivalent}
	\beta_1 E(t) \le \Phi(t) \le \beta_2 E(t), \quad t \, \ge 0, 
	\end{equation}
	with $\beta_1 = \mu - \frac{2}{\sqrt{\lambda_1}}$ and 
	$\beta_2 = \mu + \frac{2}{\sqrt{\lambda_1}}$. 
	\item There exists a constant $C>0 $ such that
	\begin{equation} \label{phi-prime}
	\frac{d \phi}{dt} \le -E -\frac{1}{2}\|\nabla w\|^2_{L^2(M)} 
	+ 2 \|\partial_t w\|^2_{L^2(M)}+C\|w\|^2_{L^3(M)} +\int_{M} p_0|w|^2 \, dx,
	\end{equation}
	\item There exists a constant $C>0 $ such that
$$
\frac{d \psi}{dt} \le -\|\nabla w\|^2_{L^2(\omega)}+2\|\partial_t  w\|^2_{L^2(\omega)} 
+ C\|w\|^2_{L^3(M)}+\int_{\omega} p_0|w|^2 \, dx.
$$
\end{enumerate}
\end{Lemma}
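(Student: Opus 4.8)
The plan is to establish the three items by differentiating the auxiliary functionals $\phi,\psi$ and substituting the equation \eqref{w1}, keeping the potential term $\int p_0|w|^2$ intact throughout. For item (1), I would estimate $\phi$ and $\psi$ directly. Since $w(t)\in H^1_0(M)$, the Poincar\'e inequality gives $\|w\|_{L^2(M)}\le\lambda_1^{-1/2}\|\nabla w\|_{L^2(M)}$, so Cauchy--Schwarz together with $2ab\le a^2+b^2$ yields $|\phi(t)|\le\lambda_1^{-1/2}\|\nabla w\|_{L^2(M)}\|\partial_t w\|_{L^2(M)}\le\lambda_1^{-1/2}E(t)$; restricting the integral to $\omega$ gives the same bound $|\psi(t)|\le\lambda_1^{-1/2}E(t)$. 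With $\eta\le1$ this produces $|\eta\phi+\psi|\le 2\lambda_1^{-1/2}E$, hence $\beta_1E\le\Phi\le\beta_2E$ with $\beta_1=\mu-2/\sqrt{\lambda_1}$ and $\beta_2=\mu+2/\sqrt{\lambda_1}$; the hypothesis that $\mu$ be large is precisely $\mu>2/\sqrt{\lambda_1}$, which makes $\beta_1>0$ and the two functionals equivalent.

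For item (2), differentiating gives $\phi'=\|\partial_t w\|_{L^2(M)}^2+\int_M w\,\partial_t^2w\,dx$, and replacing $\partial_t^2w$ by $\Delta w+p_0w+p_1\partial_t w$ from \eqref{w1}, together with $\int_M w\Delta w\,dx=-\|\nabla w\|_{L^2(M)}^2$ (the boundary term vanishing since $w=0$ on $\partial M$), gives
\[
\phi'=\|\partial_t w\|_{L^2(M)}^2-\|\nabla w\|_{L^2(M)}^2+\int_M p_0|w|^2\,dx+\int_M p_1 w\,\partial_t w\,dx.
\]
The decisive algebraic step is to split $-\|\nabla w\|_{L^2(M)}^2=-E-\tfrac12\|\nabla w\|_{L^2(M)}^2+\tfrac12\|\partial_t w\|_{L^2(M)}^2$ using $E=\tfrac12(\|\nabla w\|_{L^2(M)}^2+\|\partial_t w\|_{L^2(M)}^2)$. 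The remaining integral is absorbed as follows: from $m_1\le g'\le m_2$ and $a\in L^\infty(M)$ one has $|p_1|\le\|a\|_{L^\infty}m_2$, so Young's inequality gives $\int_M p_1w\,\partial_t w\,dx\le\tfrac12\|\partial_t w\|_{L^2(M)}^2+C\|w\|_{L^2(M)}^2$, and the continuous embedding $L^3(M)\hookrightarrow L^2(M)$ (valid since $M$ has finite measure) replaces $\|w\|_{L^2(M)}^2$ by $C\|w\|_{L^3(M)}^2$. Adding the kinetic contributions $\tfrac32\|\partial_t w\|^2$ (the leading term plus the one produced by the splitting) and $\tfrac12\|\partial_t w\|^2$ (from Young) gives exactly $2\|\partial_t w\|_{L^2(M)}^2$, which is \eqref{phi-prime}.

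Item (3) is the delicate one and is the step I expect to be the main obstacle. Differentiating $\psi$ and substituting \eqref{w1} as above leads to $\int_\omega w\Delta w\,dx$, whose integration by parts over $\omega$ now produces a boundary contribution $\int_{\partial\omega\cap\,\mathrm{int}(M)}w\,\partial_\nu w\,dS$ over the interior part of $\partial\omega$ (the part lying on $\partial M$ still drops out since $w=0$ there), and this term cannot simply be discarded as in item (2). I would resolve it by reading $\psi$ through a smooth weight $\theta\in C^\infty(M)$ with $0\le\theta\le1$ and $\mathrm{supp}\,\theta\subset\omega$, equal to $1$ on the part of $\omega$ over which the gradient term is to be retained; two integrations by parts then move the derivatives onto $\theta$ and convert the offending boundary integral into a zeroth-order term $\tfrac12\int_M(\Delta\theta)\,w^2\,dx$, bounded by $C\|w\|_{L^2(\omega)}^2\le C\|w\|_{L^3(M)}^2$, while leaving a negative, $\omega$-localized gradient term. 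After this reduction the computation is structurally identical to item (2) but localized (with no $-E$ term available): one retains $-\|\nabla w\|_{L^2(\omega)}^2$, collects the kinetic pieces into $2\|\partial_t w\|_{L^2(\omega)}^2$ through an analogous Young estimate on the $\omega$-supported $p_1$ term, and absorbs all remaining lower-order contributions into $C\|w\|_{L^3(M)}^2$, leaving $\int_\omega p_0|w|^2\,dx$ untouched. The reason for keeping the $\int p_0|w|^2$ terms unestimated in all three items is that the full force of the critical growth of $f'$ hidden in $p_0$ is deferred to the later combination with the observability inequality of Theorem \ref{thm-ucp}, rather than being spent here.
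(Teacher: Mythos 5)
Your proposal is correct, and on items (1) and (2) it coincides with the paper's argument: the paper proves only \eqref{phi-prime} explicitly (via the same substitution of \eqref{w1}, the bound $\int_M p_1 w\,\partial_t w\,dx \le \tfrac12\|\partial_t w\|_{L^2(M)}^2 + C\|w\|_{L^3(M)}^2$ from \eqref{g} and the embedding $L^3(M)\hookrightarrow L^2(M)$, and the identity $-\|\nabla w\|_{L^2(M)}^2+\tfrac32\|\partial_t w\|_{L^2(M)}^2 = -E-\tfrac12\|\nabla w\|_{L^2(M)}^2+2\|\partial_t w\|_{L^2(M)}^2$), dismissing items (1) and (3) as ``standard''. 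Your Poincar\'e--Cauchy--Schwarz treatment of item (1) is exactly the implicit argument behind $\beta_1,\beta_2$. The genuine added value of your write-up is item (3): you correctly diagnose that with $\psi$ taken literally as $\int_\omega w\,\partial_t w\,dx$, integration by parts over $\omega$ leaves an uncontrolled flux term $\int_{\partial\omega\cap\,\mathrm{int}(M)} w\,\partial_\nu w\,dS$, which the right-hand side of item (3) cannot dominate; the weighted multiplier $\int_M \theta\, w\,\partial_t w\,dx$ with $0\le\theta\le 1$, $\mathrm{supp}\,\theta\subset\omega$, turning the flux into $\tfrac12\int_M(\Delta\theta)w^2\,dx \le C\|w\|_{L^3(M)}^2$, is the standard repair (it is the device used in the quasi-stability literature the paper follows, e.g.\ \cite{clt}), and your bookkeeping of the remaining terms is sound. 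One caveat you should make explicit: with $\mathrm{supp}\,\theta\subset\omega$ and $\theta\equiv 1$ on $\omega_0$, the retained gradient term is $-\|\nabla w\|_{L^2(\omega_0)}^2$ rather than $-\|\nabla w\|_{L^2(\omega)}^2$ (taking instead $\theta\equiv1$ on $\omega$ with support in a larger set would push the kinetic terms outside the region where $a\ge a_0$, breaking their absorption by the damping in \eqref{Eprime}). This is harmless downstream provided $\omega_0$ is chosen as a slightly smaller admissible $\varepsilon$-controllable region nested in $\omega$ --- possible by the construction of Theorem \ref{thm-geometry}, since $\omega_0$ need only be a neighborhood of $\overline{M\setminus V}$ --- because the observability inequality of Theorem \ref{thm-ucp} then applies to $\omega_0$ in Lemma \ref{lemma-M} verbatim; so your route amounts to a mild restatement of item (3), not a gap, and it actually supplies a justification the paper omits.
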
	

\begin{proof} The proof is standard. Let us verify estimate \eqref{phi-prime}. Using (\ref{w1}),
	$$
	\frac{d \phi}{dt} = \int_{M} (\Delta w + p_0 w + p_1 \partial_t w) w \, dx + \| \partial_t w\|_{L^2(M)}^2. 
	$$
	From assumption (\ref{g}) we deduce
	$$
	\int_{M} p_1 (\partial_t w) w \, dx \le m_2 \| w\|_{L^2(M)} \|\partial_t w\|_{L^2(M)} \le 
	C_3 \|w\|^2_{L^3(M)} + \frac{1}{2} \| \partial_t w\|_{L^2(M)}^2.   
	$$
	Then  
	$$
	\frac{d \phi}{dt} \le - \|\nabla w\|_{L^2(M)}^2  + \frac{3}{2} \| \partial_t w\|_{L^2(M)}^2
	+ C_3 \|w\|^2_{L^3(M)} + \int_{M} p_0 | w|^2 \, dx,  
	$$
	which implies (\ref{phi-prime}). 
\end{proof}

From above lemma, \eqref{Eprime} and taking $\mu > 2/(a_0 m_1)$ it yields
\begin{align*}
\frac{d}{dt}\Phi(t)  \le - \eta E(t) + Z(t), \quad t\ge 0,
\end{align*}
where 
$$
Z = - \frac{3}{2} \|\nabla w\|^2_{L^2(\omega)} - \mu \int_{M} p_0w w_t \, dx 
+ 2 \eta \|w_t\|^2_{L^2(M)} + 2 \int_{M} |p_0 w^2| \, dx + C_{B} \| w \|_{L^3(M)}^2.
$$
Using \eqref{equivalent} and Gronwall lemma, we obtain  
\begin{equation} \label{withZ}
\Phi(t) \le e^{-\frac{\eta}{\beta_2}t} \Phi(0) 
+ \int_{0}^{t}e^{-\frac{\eta}{\beta_2}(t-s)} Z(s)\,ds.
\end{equation}
We shall estimate the integral term in \eqref{withZ} by applying the observability inequality \eqref{obsWAVE}.

\begin{Lemma} \label{lemma-p0w}	The functions $p_0,p_1$ defined in \eqref{p0p1-our} satisfy the assumptions \eqref{p0p1-1}-\eqref{p0p1-2} of Theorem \ref{thm-ucp}. In addition, there exists $C_{BT}>0$ such that 
	\begin{equation} \label{p0L1}
	\| p_0w \|_{L^1(0,T;L^2(M))} \le C_{BT} \sup_{t\in [0,T]} \| w(t) \|_{L^3(M)},
	\end{equation}
	for sufficiently large $T>0$. 
\end{Lemma}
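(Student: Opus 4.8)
The plan is to treat the two claims separately, dispatching the verification of \eqref{p0p1-1}--\eqref{p0p1-2} by elementary Sobolev embeddings and reserving the real work for the $L^1(0,T;L^2(M))$ estimate \eqref{p0L1}, where the mean value theorem and the critical growth of $f'$ interact delicately.

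First I would settle the regularity of the coefficients. For $p_1=-a\,g'(\cdot)$, assumptions \eqref{g} and \eqref{damping} give $|p_1|\le \|a\|_{L^\infty(M)}\,m_2$, so $p_1\in L^\infty(0,T;L^\infty(M))$ at once. For $p_0=-f'(\xi)$ with $\xi=\alpha u^1+(1-\alpha)u^2$, the bound $|f'(z)|\le C_f(1+|z|^2)$ from \eqref{f1} together with the three-dimensional embedding $H^1_0(M)\hookrightarrow L^6(M)$ yields $\|p_0(t)\|_{L^3(M)}\le C(1+\|\xi(t)\|_{L^6(M)}^2)$, which is bounded on $[0,T]$ since $z^1,z^2$ remain in a bounded set of $\mathcal{H}$ by \eqref{compE}. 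In particular $p_0\in L^\infty(0,T;L^3(M))\subset L^2(0,T;L^2(M))$, giving \eqref{p0p1-1}. For \eqref{p0p1-2} I would use H\"older with $\tfrac12=\tfrac13+\tfrac16$ to obtain the pointwise bound $\|p_0 w\|_{L^2(M)}\le \|p_0\|_{L^3(M)}\|w\|_{L^6(M)}\le C_B\|\nabla w\|_{L^2(M)}$ and then integrate the square in time.

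For the central estimate \eqref{p0L1} I would rewrite, via the mean value theorem (only $C^1$ is used here, which is exactly why no integration by parts in time is attempted), $p_0 w=f(u^2)-f(u^1)$, so that the quantity to control is $\|f(u^1)-f(u^2)\|_{L^2(M)}$. The growth bound then gives the pointwise-in-time inequality
\begin{equation*}
\|p_0(t)\,w(t)\|_{L^2(M)}\le C\big(1+\|u^1(t)\|_{L^{12}(M)}^2+\|u^2(t)\|_{L^{12}(M)}^2\big)\,\|w(t)\|_{L^3(M)},
\end{equation*}
obtained by pairing $(1+|u^1|^4+|u^2|^4)\in L^3(M)$ against $|w|^2\in L^{3/2}(M)$. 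Integrating over $[0,T]$ and pulling out $\sup_{t}\|w(t)\|_{L^3(M)}$ then reduces \eqref{p0L1} to the finiteness of $\int_0^T(\|u^1\|_{L^{12}(M)}^2+\|u^2\|_{L^{12}(M)}^2)\,dt$, with a bound depending only on $B$ and $T$.

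The hard part will be precisely this last point. In dimension three the energy estimate only gives $u^i\in L^\infty(0,T;H^1_0(M))\hookrightarrow L^\infty(0,T;L^6(M))$, and $H^1_0(M)$ does not embed into $L^{12}(M)$; moreover no purely spatial Gagliardo--Nirenberg interpolation can convert $\|w\|_{L^6}$ into $\|w\|_{L^3}$ without reintroducing a full factor $\|\nabla w\|_{L^2}$, so the weaker norm $\|w\|_{L^3}$ on the right of \eqref{p0L1} cannot be reached from the $H^1$-energy alone. I would therefore establish \eqref{p0L1} first for strong solutions, for which $u^i\in C(0,T;H^2(M))\hookrightarrow C(0,T;L^\infty(M))$ makes $\int_0^T\|u^i\|_{L^{12}(M)}^2\,dt$ manifestly finite, and then recover the weak case through the continuous-dependence estimate \eqref{uni1}; the delicate issue is the uniformity of $C_{BT}$ over the $\mathcal{H}$-bounded set $B$, which must be secured by the space--time integrability of energy solutions (consistently with Remark \ref{rem-quasistability}(c), where the coefficient is allowed to be $L^1_{\mathrm{loc}}$ in time). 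Once \eqref{p0L1} is available it feeds the term $\mu\int_M p_0 w\,\partial_t w\,dx$ appearing in $Z$ through $|\int_M p_0 w\,\partial_t w\,dx|\le \|p_0 w\|_{L^2}\|\partial_t w\|_{L^2}$, the factor $\|\partial_t w\|_{L^2}$ being controlled by the energy on $B$.
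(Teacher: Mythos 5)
Your verification of \eqref{p0p1-1}--\eqref{p0p1-2} matches the paper's (same H\"older pairing $\frac12=\frac13+\frac16$, same use of \eqref{compE} and forward invariance), and your pointwise inequality $\|p_0(t)w(t)\|_{L^2(M)}\le C\bigl(1+\|u^1(t)\|_{L^{12}(M)}^2+\|u^2(t)\|_{L^{12}(M)}^2\bigr)\|w(t)\|_{L^3(M)}$ is exactly the one the paper uses; you also correctly isolate the crux, namely a bound on $\int_0^T\|u^i\|_{L^{12}(M)}^2\,dt$ depending only on $B$ and $T$. However, your route to that bound --- prove \eqref{p0L1} for strong solutions via $H^2(M)\hookrightarrow L^\infty(M)$ and recover energy solutions by density through \eqref{uni1} --- has a genuine gap. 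For a strong solution, the $L^\infty$-in-space control, and hence your bound on $\int_0^T\|u^i\|_{L^{12}}^2\,dt$, is governed by the $D(\mathbb{A})$-norm of the data; along an approximating sequence converging to $(u_0,u_1)$ only in $\mathcal{H}$ these norms are in general unbounded, so the constant in your strong-solution version of \eqref{p0L1} degenerates in the limit. The estimate \eqref{uni1} lives in the $\mathcal{H}$-topology and transports neither the $L^2_tL^{12}_x$ bound nor the constant. The appeal to Remark \ref{rem-quasistability}$(c)$ does not rescue this: the $L^1_{\rm loc}$ relaxation there concerns time dependence of the quasi-stability coefficient, not uniformity over initial data in $B$, which is what fails here; and in any case the paper needs the constant version to get \eqref{t-regular} and thus the $H^2\times H^1$ regularity of the attractor.

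The missing idea is the Strichartz estimate, which is precisely how the paper secures the ``space--time integrability of energy solutions'' that you invoke but never establish. Writing the equation as $\partial_t^2u-\Delta u=G$ with $G=-a\,g(\partial_t u)-f(u)$, one has $\|G\|_{L^1(0,T;L^2(M))}\le C_{BT}$ uniformly for data in $B$: indeed $\|a\,g(\partial_t u)\|_{L^2(M)}\le \|a\|_{L^\infty(M)}\,m_2\,\|\partial_t u\|_{L^2(M)}$ by \eqref{g}, and $\|f(u)\|_{L^2(M)}\le C\bigl(1+\|u\|_{L^6(M)}^3\bigr)$ by \eqref{f1} and $H^1_0(M)\hookrightarrow L^6(M)$, both controlled through \eqref{compE}. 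The Strichartz inequality \eqref{strichartz} (see \cite{stri,ginibre-velo,joly-laurent}) with the admissible pair $(q,r)=(4,12)$ then gives $\|u^i\|_{L^4(0,T;L^{12}(M))}\le C_{BT}$ with a constant depending only on $B$ and $T$, and a H\"older estimate in time finishes \eqref{p0L1}. This dispersive gain is not derivable from the energy identity --- as you yourself observe, $H^1_0(M)$ does not embed into $L^{12}(M)$ in dimension three --- so without Strichartz (or an equivalent space--time estimate holding at the energy level with constants depending only on the $\mathcal{H}$-bound) the lemma, with the uniform constant that the quasi-stability argument requires, remains unproved.
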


\begin{proof} Clearly $p_1 \in L^{\infty}(0,T;L^{\infty}(M))$.
Now, from assumption \eqref{f1} there exists a constant $C>0$ such that     
	$$
	\int_{M} |p_0 w|^2 \, dx \le C\left( 1+ \| u^1 \|_6^4 + \| u^2 \|_6^4 \right) \| w \|_6^2.
	$$
Using \eqref{compE} and since $B$ is forward invariant, then $\| p_0 w \|^2_{L^2(M)} \le C_{BT} \| \nabla w \|_{L^2(M)}^2$. Integrating over $[0,T]$ we obtain \eqref{p0p1-2}. 

\medskip
	
	To prove \eqref{p0L1} we use Strichartz estimates. 
	Rewriting the wave equation in \eqref{NW} as 
	$$
	\partial_{t}^2u-\Delta u = G(x,t),
	$$
	with $G = - ag(\partial_t u)-f(u)$, we see that $G \in L^1(0,T;L^2(M))$.
Then we can apply Strichartz estimates \cite{stri} to a solution $u$ of \eqref{NW} with initial data $(u_0,u_1)$. Accordingly (see \cite{ginibre-velo,joly-laurent}), 
	for  
	$$
	\frac{1}{q} + \frac{3}{r} = \frac{1}{2}, \quad q \in [\mbox{$\frac{7}{2}$},\infty],
	$$
	we obtain, for some constant $C>0$,  
	\begin{equation} \label{strichartz}
	\| u \|_{L^q(0,T;L^r(M))} \le C \left( \| u_0 \|_{H^1_0(M)} + \| u_1 \|_{L^2(M)} 
	+ \| G \|_{L^1(0,T;L^2(M))} \right).
	\end{equation}
	In particular, for $q=4$ and $r=12$, we see that $u^1,u^2 \in L^{12}(M)$ and therefore   
	$$
	\|p_0 w\|_{L^2(M)}^2 \le C\left(1 + \| u^1 \|_{L^{12}(M)}^4 + \| u^2 \|_{L^{12}(M)}^4 \right) \| w \|_{L^3(M)}^2.
	$$
	Taking into account that \eqref{strichartz} is uniformly bounded for $z^1,z^2 \in B$, 
	\begin{align*}
	\int_0^T \|p_0 w\|_{L^2(M)} \, dt
	& \le C \int_0^T 
	\left(1 + \| u^1 \|_{L^{12}(M)}^2 + \| u^2 \|_{L^{12}(M)}^2 \right) \| w \|_{L^3(M)}\,dt \\ 
	& \le C \left(1 + \| u^1 \|^2_{L^4(0,T;L^{12}(M))} + \| u^2 \|^2_{L^4(0,T;L^{12}(M))} \right)\|w\|_{L^2(0,T;L^{3}(M))} \\ 
	& \le C_{BT} \sup_{t \in [0,T]} \| w(t) \|_{L^3(M)},
	\end{align*}
	which implies \eqref{p0L1}.  
\end{proof}  

\begin{Lemma} \label{lemma-M} For $T>0$ large we can choose $\eta \in (0,1)$ such that 
	\begin{equation} \label{M}
	\int_{0}^{T}e^{-\frac{\eta}{\beta_2}(T-s)} Z(s) \, ds \le C_{BT} \sup_{t \in [0,T]}\| w(t)\|_{3}^2,
	\end{equation}
	for some constant $C_{BT}>0$.
\end{Lemma}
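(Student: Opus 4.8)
The plan is to integrate the pointwise bound $\frac{d}{dt}\Phi \le -\eta E + Z$ and to show that the convolution $\int_0^T e^{-\frac{\eta}{\beta_2}(T-s)}Z(s)\,ds$ collapses to a multiple of $\sup_{[0,T]}\|w\|_3^2$. Writing $c=\eta/\beta_2$, I would split $Z$ into its single good term $-\frac32\|\nabla w\|^2_{L^2(\omega)}$ and the remaining positive or lower-order terms, one of which, $C_B\|w\|^2_{L^3(M)}$, integrates directly to $C_BT\sup_{[0,T]}\|w\|_3^2$. Since $e^{-c(T-s)}\in[e^{-cT},1]$ on $[0,T]$, the positive contributions are bounded above by dropping the weight, while the negative good term is bounded above using the smaller weight $e^{-cT}$. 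The whole point of the estimate — and the reason the observability inequality of Theorem \ref{thm-ucp} is stated with potential rather than kinetic energy — is that the surviving negative term $-\frac32 e^{-cT}\int_0^T\|\nabla w\|^2_{L^2(\omega)}\,ds$ is exactly the quantity bounded from below by \eqref{obsWAVE}, so it is available to absorb every genuinely bad term.

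Next I would dispose of the $p_0$-terms. For $2\int_0^T\int_M|p_0 w^2|\,ds$ I use $\int_M|p_0 w^2|\le\|p_0 w\|_{L^2(M)}\|w\|_{L^2(M)}$ together with the embedding $L^3(M)\hookrightarrow L^2(M)$ and the $L^1$-in-time bound \eqref{p0L1} of Lemma \ref{lemma-p0w}, which yields $\le C_{BT}\big(\sup_{[0,T]}\|w\|_3\big)^2$, already of the desired form. For the cross term $-\mu\int_M p_0 w\,\partial_t w\,ds$ a first-power bound in $\sup\|w\|_3$ is inadmissible (it fails as the two trajectories coalesce), so I would instead apply Young's inequality, $\mu|p_0 w\,\partial_t w|\le \tfrac{\delta}{2}|\partial_t w|^2+\tfrac{\mu^2}{2\delta}|p_0 w|^2$, and then the pointwise estimate $\|p_0 w\|_{L^2(M)}^2\le C_{BT}\|\nabla w\|_{L^2(M)}^2$ from the proof of Lemma \ref{lemma-p0w} (equivalently \eqref{p0p1-2}). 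Together with the explicit term $2\eta\|\partial_t w\|^2_{L^2(M)}$ this leaves a bundle of bad terms of the form $\int_0^T\kappa(\eta,\delta)\big(\|\partial_t w\|^2_{L^2(M)}+\|\nabla w\|^2_{L^2(M)}\big)\,ds$, with $\kappa(\eta,\delta)$ proportional to $\eta$ and $\delta$.

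Finally I would absorb this bundle into the good term. The energy identity for \eqref{w1} gives $\frac{d}{dt}E=\int_M p_0 w\,\partial_t w+\int_M p_1|\partial_t w|^2$, whence $\frac{d}{dt}E\le C_{BT}E$ on the forward invariant set $B$, and Gronwall yields $\int_0^T E\,ds\le C_{BT}(E(0)+E(T))$. Combining this with \eqref{obsWAVE}, rewritten as $E(0)+E(T)\le \frac{1}{2k_T}\int_0^T\|\nabla w\|^2_{L^2(\omega)}\,ds$, gives
\[
\int_0^T\big(\|\partial_t w\|^2_{L^2(M)}+\|\nabla w\|^2_{L^2(M)}\big)\,ds = 2\int_0^T E\,ds\le \frac{C_{BT}}{k_T}\int_0^T\|\nabla w\|^2_{L^2(\omega)}\,ds.
\]
I would then first fix $T$ large (so that \eqref{obsWAVE} holds), and only afterwards choose $\eta$ and $\delta$ small enough that $\kappa(\eta,\delta)\,C_{BT}/k_T\le \tfrac32 e^{-cT}$; this makes the bad bundle net-dominated by $-\tfrac32 e^{-cT}\int_0^T\|\nabla w\|^2_{L^2(\omega)}\,ds$, leaving only the two $\sup\|w\|_3^2$ contributions, which is \eqref{M}. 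I expect the main obstacle to be precisely this last bookkeeping: the observability/energy-comparison constant $C_{BT}/k_T$ is frozen once $T$ is fixed, so one must check that the coefficients $\kappa(\eta,\delta)$ can be driven below it while $e^{-cT}=e^{-\eta T/\beta_2}$ stays bounded away from $0$ — which is consistent exactly because shrinking $\eta$ both weakens the bad terms and pushes the weight back toward $1$.
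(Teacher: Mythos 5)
Your global architecture — split $Z$, keep the good term with the degraded weight $e^{-cT}$, bound the bad terms and absorb them via the observability inequality, and observe that shrinking $\eta$ does not kill the weight — is the paper's, and your treatments of the terms $2\eta\|\partial_t w\|^2_{L^2(M)}$, $2\int_M|p_0w^2|\,dx$ and $C_B\|w\|^2_{L^3(M)}$ agree with it (the paper settles your final worry cleanly: since $\eta<1$ and $\beta_2>2/\sqrt{\lambda_1}$, one has $e^{-cT}\ge e^{-\frac{\sqrt{\lambda_1}}{2}T}$ uniformly in $\eta$, so the good coefficient is frozen before $\eta,\rho$ are chosen). The genuine gap is your cross term. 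After the pointwise Young inequality $\mu|p_0w\,\partial_t w|\le \tfrac{\delta}{2}|\partial_t w|^2+\tfrac{\mu^2}{2\delta}|p_0w|^2$ and the bound $\|p_0w\|^2_{L^2(M)}\le C_{BT}\|\nabla w\|^2_{L^2(M)}$ from \eqref{p0p1-2}, the coefficient of $\int_0^T\|\nabla w\|^2_{L^2(M)}\,ds$ in your bundle is $\tfrac{\mu^2}{2\delta}C_{BT}$ — \emph{inversely} proportional to $\delta$, not proportional to it as you assert. The product of the two Young coefficients is pinned at $\tfrac{\mu^2}{4}C_{BT}$, so their maximum is bounded below by $\tfrac{\mu}{2}\sqrt{C_{BT}}$ no matter how $\delta$ is tuned; meanwhile the absorption threshold (your $\tfrac{3}{2}e^{-cT}k_T/C_{BT}$, with $\mu$ forced large by Lemma \ref{lemma-Ew} and $k_T$, $C_{BT}$ outside your control) is fixed once $T$ is. Nothing guarantees the former lies below the latter, so the "bad bundle" cannot be driven small and the absorption step fails.

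The repair is precisely the route you dismissed as inadmissible, and it is the reason Lemma \ref{lemma-p0w} proves the Strichartz estimate \eqref{p0L1} in the first place. The paper bounds the cross term \emph{globally in time}: $\mu\int_0^Te^{-c(T-s)}\int_M|p_0w\,\partial_t w|\,dx\,ds\le \mu\,\|p_0w\|_{L^1(0,T;L^2(M))}\,\|\partial_t w\|_{L^\infty(0,T;L^2(M))}$, where \eqref{p0L1} gives the first factor $\le C_{BT}\sup_{t\in[0,T]}\|w(t)\|_{L^3(M)}$ and \eqref{uni1} gives the second $\le C\sqrt{E(0)}$. Linearity in $\sup\|w\|_{L^3}$ is harmless here because the cofactor is $\sqrt{E(0)}$: Young's inequality at the scalar level restores the quadratic structure, $\le C_{BT\rho}\sup_{t\in[0,T]}\|w(t)\|^2_{L^3(M)}+\rho E(0)$, and — this is the decisive difference from your pointwise split — the small parameter $\rho$ multiplies only $E(0)$, while the large constant lands on the harmless $\sup\|w\|^2_{L^3}$ term. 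The good term, via \eqref{obsWAVE}, contributes $-3k_Te^{-\frac{\sqrt{\lambda_1}}{2}T}E(0)$, and choosing $\eta<\frac{k_T}{4D_{BT}}e^{-\frac{\sqrt{\lambda_1}}{2}T}$ and $\rho<\frac{k_T}{\mu}e^{-\frac{\sqrt{\lambda_1}}{2}T}$ closes the estimate. (Your Gronwall detour $\int_0^TE\,ds\le C_{BT}(E(0)+E(T))$ is sound but superfluous — the paper bounds each bad term by a multiple of $E(0)$ directly through \eqref{uni1} — and is not where the proof breaks.)
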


\begin{proof} 
	We have seen that $p_0,p_1$ satisfy the assumptions of Theorem \ref{thm-ucp}. 
	Keeping in mind that $\beta_2 > 2 \lambda_1^{-\frac{1}{2}}$ and $\eta < 1$,  
	the observability inequality gives 
	\begin{align*}
	- \int_0^T e^{-\frac{\eta}{\beta_2}(T-s)} \| \nabla w(s) \|_{L^2(\omega)}^2 \, ds 
	& \le - e^{-\frac{\sqrt{\lambda_1}}{2}T} \int_0^T \| \nabla w(s) \|_{L^2(\omega)}^2 \, ds \\
	& \le - 2 k_T e^{-\frac{\sqrt{\lambda_1}}{2}T} E(0), 
	\end{align*}
	for $T>0$ large. Now from \eqref{p0L1} and \eqref{uni1}, given $\rho >0$, there exists $C_{BT\rho}>0$ such that 
	\begin{align*}
	\int_0^T e^{-\frac{\eta}{\beta_2}(T-s)} \int_{M} |p_0 w \partial_t w  | \,dx ds 
	& \le \| p_0w \|_{L^1(0,T;L^2(M))}  \| \partial_t w\|_{L^{\infty}(0,T;L^2(M))} \\ 
	& \le C_{BT\rho} \sup_{t \in [0,T]} \|w(t)\|_{L^3(M)}^2 + \rho E(0).
	\end{align*} 
	We also have   
	$$
	\int_0^T e^{-\frac{\eta}{\beta_2}(T-s)} \| \partial_t w \|_{2}^2 \, ds \le 2 T D_{BT} E(0),
	$$ 
	and  
	$$
	2\int_0^T e^{-\frac{\eta}{\beta_2}(T-s)} \int_{M} |p_0 w^2| \,dx ds \le C_{BT} \sup_{t \in [0,T]} \|w(t)\|_{L^3(M)}^2 .
	$$
	Combining the above estimates we obtain   
	$$
	\int_{0}^{T}e^{-\frac{\eta}{\beta_2}(T-s)} Z(s) \, ds 
	\le (4 \eta T D_{BT} + \mu \rho - 3 k_T e^{-\frac{\sqrt{\lambda_1}}{2}T} ) E(0) + C_{BT} \sup_{t \in [0,T]}\| w(t)\|_{L^3(M)}^2.
	$$
	Choosing 
	\begin{equation} \label{eta}
	\eta < \frac{k_T}{4D_{BT}}e^{-\frac{\sqrt{\lambda_1}}{2}T} \quad \mbox{and} \quad 
	\rho < \frac{k_T}{\mu}e^{-\frac{\sqrt{\lambda_1}}{2}T}  
	\end{equation} 
	\eqref{M} follows. 
\end{proof}

\paragraph{Proof of Theorem \ref{thm-stable} (conclusion).} Firstly we fix $T>0$ large according to the observability inequality. 
Then fix $\eta \in (0,1)$ satisfying \eqref{eta} and 
$$
\mu > \max \left\{ \frac{2}{a_0m_1} , \frac{2}{\sqrt{\lambda_1}} \right\}. 
$$
Then $\beta_1,\beta_2>0$ can be defined as in Lemma \ref{lemma-Ew}. Combining \eqref{withZ} and \eqref{M} we obtain 	
$$
\Phi(T) \le \gamma_{T} \Phi(0) + C_{BT} \sup_{t \in [0,T]} \| w(t) \|_{L^3(M)}^2, 
$$
where $\gamma_{T} = e^{-\frac{\eta T}{\beta_2}} < 1$. Since the system is autonomous, 
repeating the argument for $[T,2T]$ and so on (e.g. \cite[Lemma 8.5.5]{CL-yellow}), 
we obtain $\xi >0$ such that  
$$
\Phi(t) \le C_B e^{-\xi t} \Phi(0) + C_B \sup_{s \in [0,t]} \|w(s)\|^2_{L^3(M)}, \quad t \ge 0. 
$$
Finally, from \eqref{equivalent}, 
$$
E(t) \le \frac{\beta_2}{\beta_1}C_B e^{-\xi t} E(0) 
+ \frac{C_B}{\beta_1} \sup_{s \in [0,t]} \|w(s)\|^2_{L^3(M)}, \quad t \ge 0. 
$$
Therefore \eqref{esta} holds. \qed

\subsection{Proof of main result}

\paragraph{Proof of Theorem \ref{thm-attractor}.} 
$(a)$ We have proved that the system is asymptotically smooth and gradient. 
Also, we notice that $\Psi(z) \to \infty$ if and only if $\|z\|_{\mathcal{H}} \to \infty$. 
It remains to show that stationary solutions of \eqref{NW} are uniformly bounded. 
Indeed, if
$$
\| \nabla u \|_{2}^2 + \int_M f(u)u \, dx = 0, 
$$
using \eqref{f2} we can write     
$$
\int_{M} f(u)u \, dx \ge - \frac{\lambda_1}{4} \| u \|_2^2 - c_{f}, 
$$ 
for some constant $c_{f}>0$. This gives $\| \nabla u \|_{2}^2 \le 2c_{f}$, 
which shows that $\mathcal{N}$ is bounded. Then the existence of a global attractor $\mathcal{A}$  
follows from a classical result (e.g. \cite[Corollary 7.5.7]{CL-yellow}).
$(b)$ Theorem \ref{thm-stable} shows that our system is quasi-stable on the global attractor $\mathcal{A}$.  
Therefore, as mentioned in Remark \ref{rem-quasistability}, $\mathcal{A}$ has finite fractal dimension from \cite[Theorem 7.9.6]{CL-yellow}.
$(c)$ To see the regularity of attractor $\mathcal{A}$, we know from (\ref{t-regular}) that any complete trajectory 
$(u(t),\partial_t u(t))$ satisfies 
$$
\| \partial_t u (t) \|_{H^1_0(M)} + \| \partial_t^2 u(t) \|_{L^2(M)} \le R , \quad t \in \mathbb{R}.  
$$
Then, equation (\ref{NW}) gives $-\Delta u \in L^2(M)$ and therefore $(u,\partial_t u) \in (H^2(M) \cap H^1_0(M)) \times H^1_0(M)$. 
This completes the of Theorem \ref{thm-attractor} \qed

\paragraph{Acknowledgment.} M. M. Cavalcanti was partially supported by CNPq grant 300631/2003-0. 
T. F. Ma was partially supported by CNPq grant 312529/2018-0 and Fapesp grant 2019/11824-0.  
P. Mar\'{\i}n-Rubio was partially supported by 
Ministerio de Educaci\'on-DGPU grant PHB2010-0002-PC, Junta de Andaluc\'{\i}a grant P12-FQM-1492, 
and MINECO/FEDER grants MTM2015-63723-P and PGC2018-096540-B-I00. 
P. N. Seminario-Huertas was fully supported by CAPES/PROEX grant 8477445/D 
and CNPq grant 141602/2018-0.

\paragraph{Addresses}  

\begin{itemize} 
\item M. M. Cavalcanti

Departamento de Matem\'atica, Universidade Estadual de Maring\'a, 87020-900 Maring\'a, PR, Brazil (mmcavalcanti@uem.br)    

\item T. F. Ma and P. N. Seminario-Huertas

Instituto de Ci\^encias Matem\'aticas e de Computa{\c c}\~ao, Universidade de S\~ao Paulo, 13566-590 S\~ao Carlos, SP, Brazil (matofu@icmc.usp.br, pseminar@icmc.usp.br) 

\item P. Marin-Rubio 

Departamento Ecuaciones Diferenciales y An\'alisis Num\'erico, Universidad de Sevilla, c/ Tarfia s/n, 41012 Sevilla, Spain (pmr@us.es)
\end{itemize}

\end{document}